\newcommand*\Dynkindots{\hbox to 2em{.\hss.\hss.}}
\newcommand*\DynkinNodeSize{1.5mm}
\newcommand*\DynkinDoubleArrowLength{3.25mm}
\newcommand*\DynkinTripleArrowLength{3.5mm}
\tikzset{
  bigdnode/.style={
    circle,
    inner sep=0pt,
    minimum size=2*\DynkinNodeSize,
    fill=white,
    draw},
  dnode/.style={
    circle,
    inner sep=0pt,
    minimum size=\DynkinNodeSize,
    fill=white,
    draw},
  bnode/.style={
    circle,
    inner sep=0pt,
    minimum size=\DynkinNodeSize,
    fill=black,
    draw},
  middlearrow/.style={
    decoration={markings,
      mark=at position 0.7 with
      {\draw (0:0mm) -- +(+160:\DynkinDoubleArrowLength); \draw (0:0mm) -- +(-160:\DynkinDoubleArrowLength);},
    },
    postaction={decorate}
  },
  triplemiddlearrow/.style={
    decoration={markings,
      mark=at position 0.7 with
      {\draw (0:0mm) -- +(+160:\DynkinTripleArrowLength); \draw (0:0mm) -- +(-160:\DynkinTripleArrowLength);},
    },
    postaction={decorate}
  },	
  sedge/.style={
  },
  dedge/.style={
    middlearrow,
    double distance=1mm,
  },
  tedge/.style={
    triplemiddlearrow,
    double distance=1.0mm+\pgflinewidth,
    postaction={draw},
  },
}
\newcounter{tablepage}
\long\def\widesplit#1#2#3#4{%
\def\row{\rowz\c@tablepage}%
\clearpage
\setcounter{tablepage}{0}%
\loop
\refstepcounter{tablepage}%
\noindent
\begin{minipage}{\columnwidth}
\captionsetup{type=table}
\centering
{#3}
 \ifnum\c@tablepage=\@ne
    \caption{#2}
    \label{#4}
 \else
\caption*{#2 ({\thetablepage}/{#1})}
 \fi
\end{minipage}

\ifnum#1>\c@tablepage
\repeat}
\def\rowa#1\thbreak#2\tbreak#3\\{#1&#2\\}
\def\rowz#1{%
  \ifnum#1=\@ne
    \expandafter\rowa
  \else
    \expandafter\rowq\expandafter{\the\numexpr#1-1\expandafter\relax\expandafter}%
  \fi}
\def\rowq#1#2\thbreak#3\tbreak{\rowz{#1}#2\thbreak}
\newcolumntype{C}{>{$}c<{$}}
\newcolumntype{L}{>{$}l<{$}}
\newcommand*\xleftrightarrow[2][]{%
  \ext@arrow 9999{\longleftrightarrowfill@}{#1}{#2}}
\newcommand*\longleftrightarrowfill@{%
  \arrowfill@\leftarrow\relbar\rightarrow}
\definecolor{asmpgray}{gray}{0.85}
\newsavebox{\asmpbox}
\newenvironment{assumption}{
\vspace{5pt}
\begin{lrbox}{\asmpbox}
\begin{varwidth}{13cm}
\ignorespaces
}
{
\end{varwidth}
\end{lrbox}
\begin{center}
\setlength{\fboxsep}{5pt}
\fcolorbox{black}{asmpgray}{\usebox{\asmpbox}}
\end{center}
\ignorespacesafterend
\vspace{5pt}
}
\newtheorem{Thm}{Theorem}[section]
\newtheorem{Prop}[Thm]{Proposition}
\newtheorem{Lm}[Thm]{Lemma}
\newtheorem{Cor}[Thm]{Corollary}
\theoremstyle{definition}
\newtheorem{0}[Thm]{}
\newtheorem{Def}[Thm]{Definition}
\newtheorem{Rem}[Thm]{Remark}
\newtheorem{Ex}[Thm]{Example}
\newtheorem*{Not}{Notation}
\newtheorem*{Ack}{Acknowledgements}
\numberwithin{equation}{Thm}
\newcommand*{\mf}{\mathbf}
\newcommand*{\rom}[1]{\uppercase\expandafter{\romannumeral#1}}
\newcommand*{\F}{\mathbb{F}}
\newcommand*{\Fpbar}{\overline{\F}_{p}}
\newcommand*{\Fp}{\F_{p}}
\newcommand*{\Fq}{\F_{q}}
\newcommand*{\kunits}{k^\times}
\newcommand*{\N}{\mathbb{N}}
\newcommand*{\Q}{\mathbb{Q}}
\newcommand*{\Qlbar}{\overline{\Q}_{\ell}}
\newcommand*{\R}{\mathbb{R}}
\newcommand*{\Z}{\mathbb{Z}}
\newcommand*{\dt}{\mathsf}
\newcommand*{\fsep}{\quad}
\newcommand*{\I}{\mathrm{i}}
\newcommand*{\ii}{\mathfrak{i}}
\newcommand*{\jj}{\mathfrak{j}}
\newcommand*{\length}{l}
\newcommand*{\scB}{{\mathscr{B}}}
\newcommand*{\cE}{{\mathcal{E}}}
\newcommand*{\cH}{{\mathcal{H}}}
\newcommand*{\cM}{{\mathcal{M}}}
\newcommand*{\cN}{{\mathcal{N}}}
\newcommand*{\cO}{{\mathcal{O}}}
\DeclareMathOperator{\IC}{IC}
\DeclareMathOperator{\id}{id}
\DeclareMathOperator{\Irr}{Irr}
\DeclareMathOperator{\hgt}{ht}
\DeclareMathOperator{\Uch}{Uch}
\title[Values of unipotent characters at unipotent elements in type \texorpdfstring{$\dt E_8$}{E8} and \texorpdfstring{$\leftidx{^2}{\dt E}_6$}{2E6}]{The values of unipotent characters at unipotent elements for groups of type \texorpdfstring{$\dt E_8$}{E8} and \texorpdfstring{$\leftidx{^2}{\dt E}_6$}{2E6}}
\author{Jonas Hetz}
\address{Lehrstuhl für Algebra und Zahlentheorie, RWTH Aachen, Pontdriesch 14/16, D--52062 Aachen, Germany}
\email{jonas.hetz@rwth-aachen.de}
\keywords{Finite reductive groups, unipotent elements, generalised Springer correspondence}
\subjclass[2020]{Primary 20C33; Secondary 20G40, 20G41}
\thanks{The author gratefully acknowledges support by the Deutsche Forschungsgemeinschaft (DFG, German Research Foundation) --- Project-ID 286237555 -- TRR 195.}
\begin{document}

\begin{abstract}
In order to tackle the problem of generically determining the character tables of the finite groups of Lie type $\mf G(q)$ associated to a connected reductive group $\mf G$ over $\Fpbar$, Lusztig developed the theory of character sheaves in the 1980s. The subsequent work of Lusztig and Shoji in principle reduces this problem to specifying certain roots of unity. The situation is particularly well understood as far as character values at unipotent elements are concerned. We complete the computation of the values of unipotent characters at unipotent elements for the groups $\mf G(q)$ where $\mf G$ is the simple group of type $\dt E_8$, by specifying the aforementioned roots of unity for all prime powers $q$. We also resolve this task for the groups $\leftidx{^2}{\dt E}_6(q)$ when $q$ is a power of $p=2$. Our results thus conclude the project of computing the values of unipotent characters at unipotent elements for the simple exceptional groups of Lie type.
\end{abstract}

\maketitle

\section{Introduction}

Let $\mf G$ be a connected reductive group over an algebraic closure $k=\Fpbar$ of the finite field $\Fp$ with $p$ elements (for a prime $p$), and let $F\colon\mf G\rightarrow\mf G$ be a Frobenius map providing $\mf G$ with an $\Fq$-rational structure (where $q$ is a power of $p$). In order to tackle the problem of determining the generic character tables of the associated finite groups of Lie type $\mf G(q)=\mf G^F$ (with varying $F, q$), Lusztig developed the theory of character sheaves in the 1980s which, combined with subsequent work of Lusztig and Shoji, in principle reduces the problem to specifying certain roots of unity. In dependence of these (in general not yet known) roots of unity, Lusztig provides a theoretical algorithm to compute the values of the irreducible characters of $\mf G^F$ in \cite[\S 24]{LuCS5}. The situation is quite well understood with regard to character values at unipotent elements of $\mf G^F$ (see \cite{Luvaluni}), where the determination of the relevant roots of unity in Lusztig's algorithm would rather directly yield the values of the unipotent characters at unipotent elements. An important ingredient in this project is the computation of the ordinary Green functions of Deligne--Lusztig \cite{DL}, which has essentially been accomplished in all cases: If $p$ is a good prime for $\mf G$, this is due to Shoji \cite{ShGreenPolF4} (for groups of type $\dt F_4$), \cite{ShGreenPolClassical} (for classical groups), Spaltenstein \cite{Sp3D4} (for groups of type $\leftidx{^3}{\dt D}_4$) and Beynon--Spaltenstein \cite{BeSp} (for groups of type $\dt E_6$, $\leftidx{^2}{\dt E}_6$, $\dt E_7$ and $\dt E_8$); if $p$ is bad for $\mf G$, see again Spaltenstein \cite{Sp3D4} for groups of type $\leftidx{^3}{\dt D}_4$, Shoji \cite{ShGreen1}, \cite{ShGreen2}, \cite{ShGreen3} for classical groups (which includes the \enquote{generalised Green functions}), and Malle \cite{MUnipF4}, \cite{MGreenE6F4}, Porsch \cite{P}, Geck \cite{GcompGreen} for exceptional groups not of type $\dt E_8$.~---~For groups of type $\dt E_8$, the values of the Green functions in bad characteristic (that is, $p\leqslant5$) are known as well by now (Lübeck \cite{Lueb}). Subsequently, the program of determining the values of unipotent characters at unipotent elements has been completed for groups of type $\dt F_4$ by Marcelo--Shinoda \cite{MaShi} and for groups of type $\dt E_6$, $\dt E_7$ (relevant in bad characteristic) and $\leftidx{^2}{\dt E}_6$ in characteristic $p=3$ by the author \cite{HDiss}.

In view of these results, the remaining open problems in the above program concern the groups $\leftidx{^2}{\dt E}_6(q)$ when $q$ is a power of $p=2$ and the groups $\dt E_8(q)$ with $q$ a power of any prime $p$, which we resolve in this paper. Our main focus will lie on the groups of type $\dt E_8$; apart from a very small number of exceptions regarding these groups, our arguments provide an independent verification of the results on ordinary Green functions in \cite{BeSp} and \cite{Lueb}.

The core of our method is a reinterpretation of a formula of Ree (due to \cite[(11.28)]{CR1}, see \cite{LuFlag}, \cite{GCH}), which relates the values of the unipotent characters of $\mf G^F$ with characters of a Hecke algebra associated to $\mf G^F$. The only unknowns in said formula are the sizes of intersections of conjugacy classes of $\mf G^F$ with Bruhat cells. Based on this and motivated by \cite{LuWeylUni}, we introduce the notion of so-called \enquote{good} unipotent classes in $\dt E_8(q)$. It is remarkable that the vast majority of unipotent conjugacy classes of $\dt E_8$ turn out to contain exactly one good unipotent class of $\dt E_8(q)$; see \Cref{Thmgood}. (We have observed this already in \cite{HDiss} for certain classes in various exceptional groups; cf.\ \cite[Thm.~4.2]{LuEllip} and \cite[Thm.~5.2]{LuWeylUni} for a conceptual explanation in this direction.) This property will allow us to compute the values of any unipotent character at the elements of these classes (see \ref{Method} and \Cref{Roots1}). For a given characteristic $p$, there are at most three unipotent classes of $\dt E_8$ which do not contain a unique good class of $\dt E_8(q)$. Our method does not yield all the values of unipotent characters at elements of these classes, but the missing ones can be extracted from the results of \cite{BeSp}, \cite{Lueb}.

The paper is organised as follows. In \Cref{SecGood}, we give our definition of good unipotent elements in $\dt E_8(q)$ (\Cref{Defgood}) based on an approach of Lusztig \cite{LuWeylUni}, and we relate this notion with the one of split elements (which are only defined in good characteristic) due to Beynon--Spaltenstein \cite{BeSp}. In \Cref{ReeGreen}, we recall some of the theory concerning unipotent (almost) characters, character sheaves and the generalised Springer correspondence, so as to be able to describe Lusztig's algorithm to compute the generalised Green functions, and then we explain our method to get our hands on the unknown roots of unity in this algorithm. Sections \ref{p5}--\ref{p2} contain the explicit computations for the groups $\dt E_8(q)$, separated according to the characteristic $p$. While our arguments do of course depend on the classification of unipotent classes of groups of type $\dt E_8$ due to Mizuno \cite{MizE7E8} (with some mistakes as stated in \cite{Sp}; see Liebeck--Seitz \cite{LieSei} for an independent classification), they do not rely on the specific elements in terms of root subgroups as given in \cite{MizE7E8}, so they are robust to errors in loc.\ cit.\ in that regard. Since it may however be useful to be able to refer to the representatives in \cite{MizE7E8}, we also explain how one can single out good elements among them for almost all classes (see \Cref{Repgood}). We conclude by considering the groups $\leftidx{^2}{\dt E}_6(q)$ with $q$ a power of $p=2$ in \Cref{SectwE6}.

\begin{Not}
From now until the end of this paper (except in \Cref{SectwE6}), $\mf G$ denotes the simple algebraic group of type $\dt E_8$ over $k=\Fpbar$, so that $\mf G^F=\dt E_8(q)$ is the associated finite Chevalley group over $\Fq$. Let us fix an $F$-stable Borel subgroup $\mf B_0$ of $\mf G$ as well as an $F$-stable maximal torus $\mf T_0$ of $\mf G$ such that $\mf T_0\subseteq\mf B_0$. We denote by $\mf W=N_\mf G(\mf T_0)/\mf T_0$ the Weyl group of $\mf G$ (with respect to $\mf T_0$) and by $\Phi$ the root system associated to $\mf G$ (and $\mf T_0$). Let $\Phi^+\subseteq\Phi$ be the set of positive roots and $\Pi=\{\alpha_1,\alpha_2,\ldots,\alpha_8\}\subseteq\Phi^+$ the set of simple roots determined by $\mf B_0\supseteq\mf T_0$, labelled in such a way that the Dynkin diagram of $\mf G$ is as follows:
\begin{center}
\begin{tikzpicture}
    \draw (-1.25,-0.25) node[anchor=east]  {$\dt E_8$};

    \node[bnode,label=above:$\alpha_1$] 			(1) at (0,0) 	{};
    \node[bnode,label=right:$\alpha_2$] 			(2) at (2,-1) 	{};
    \node[bnode,label=above:$\alpha_3$] 			(3) at (1,0) 	{};
    \node[bnode,label=above:$\alpha_4$] 			(4) at (2,0) 	{};
    \node[bnode,label=above:$\alpha_5$] 			(5) at (3,0) 	{};
    \node[bnode,label=above:$\alpha_6$] 			(6) at (4,0) 	{};
	\node[bnode,label=above:$\alpha_7$] 			(7) at (5,0) 	{};
	\node[bnode,label=above:$\alpha_8$] 			(8) at (6,0) 	{};

    \path 	(1) edge[thick, sedge] (3)
          	(3) edge[thick, sedge] (4)
          	(4) edge[thick, sedge] (5)
			(4)	edge[thick, sedge] (2)
          	(5) edge[thick, sedge] (6)
			(6) edge[thick, sedge] (7)
			(7) edge[thick, sedge] (8);
\end{tikzpicture}
\end{center}
Let $S\subseteq\mf W$ be the reflections corresponding to the roots in $\Pi$; $S$ is a set of Coxeter generators for the Coxeter group $\mf W$. Let $\mf U_0:=R_\mathrm u(\mf B_0)$ be the unipotent radical of $\mf B_0$ and $\mf G_{\mathrm{uni}}\subseteq\mf G$ be the set of all unipotent elements in $\mf G$. For $\alpha\in\Phi^+$, there is an isomorphism $u_\alpha\colon k\xrightarrow{\sim}\mf U_\alpha$, $\xi\mapsto u_\alpha(\xi)$, where $\mf U_\alpha\subseteq\mf U_0$ is the root subgroup corresponding to $\alpha$. Regarding the unipotent classes in $\mf G$, we use the names of Mizuno \cite{MizE7E8} and Spaltenstein \cite{Sp}. As soon as $p$ is prescribed in a given setting, we tacitly assume to have fixed a prime $\ell\neq p$ and denote by $\Qlbar$ an algebraic closure of the field $\Q_\ell$ of $\ell$-adic numbers. For any finite group $\Gamma$, we then write $\Irr(\Gamma)$ for the set of all ordinary irreducible characters of $\Gamma$, viewed as functions $\Gamma\rightarrow\Qlbar$.
\end{Not}

\section{Good unipotent elements}\label{SecGood}

In this section, we give the definition of \enquote{good} unipotent elements (\Cref{Defgood}) and formulate the result that almost all unipotent conjugacy classes of $\mf G$ contain a unique $\mf G^F$-class consisting of good elements (\Cref{Thmgood}), which we will prove in Sections \ref{p5}--\ref{p2}. We also recall the notion of split unipotent elements (which only exist in good characteristic) of Beynon--Spaltenstein \cite{BeSp} and describe the relation between good and split elements in \Cref{Corsplitgood}, whose proof will be given in \Cref{ProofCorsplitgood}.

\begin{0}{\bf Unipotent classes and Lusztig's map.}\label{LuMap}
An important role in our argumentation will be played by making a ``good'' choice for a $\mf G^F$-conjugacy class contained in a given $F$-stable unipotent class of $\mf G$. So let $\cO\subseteq\mf G$ be a unipotent conjugacy class. The results of \cite{MizE7E8} show that we always have $F(\cO)=\cO$ and that there exists a representative $u_\cO\in\cO^F$ such that $F$ acts trivially on the group $A_\mf G(u_\cO)=C_\mf G(u_\cO)/C_\mf G^\circ(u_\cO)$, so it makes sense to introduce the following convention:
\begin{assumption}
Given an ($F$-stable) unipotent class $\cO\subseteq\mf G$, we will denote by $u_\cO\in\cO^F$ a (not necessarily fixed) element such that $F$ induces the identity on the group $A_\mf G(u_\cO)=C_\mf G(u_\cO)/C_\mf G^\circ(u_\cO)$. 
\end{assumption}
The $\mf G^F$-conjugacy classes contained in $\cO^F$ are then parametrised by the conjugacy classes of $A_\mf G(u_\cO)$: For $a\in A_\mf G(u_\cO)$, the conjugacy class of $a$ in $A_\mf G(u_\cO)$ corresponds to the $\mf G^F$-conjugacy class of $(u_\cO)_a:=gu_\cO g^{-1}\in\mf G^F$ where $g\in\mf G$ is such that $g^{-1}F(g)\in C_\mf G(u_\cO)$ is mapped to $a$ under the canonical map $C_\mf G(u_\cO)\rightarrow A_\mf G(u_\cO)$. (This follows from a standard application of the Lang--Steinberg Theorem.)

In general, the requirement on $u_\cO$ above does not uniquely specify the $\mf G^F$-conjugacy class of $u_\cO$, so we need an additional condition. Given $w\in\mf W$ with representative $\dot w\in N_\mf G(\mf T_0)^F$, we consider the Bruhat cell $\mf B_0\dot w\mf B_0$ and its set of $F$-stable points $(\mf B_0\dot w\mf B_0)^F=\mf B_0^F\dot w\mf B_0^F$. (This equality follows from the \enquote{sharp form} of the Bruhat decomposition; see, e.g., \cite[2.5.14]{C}.) Since both $\mf B_0\dot w\mf B_0$ and $\mf B_0^F\dot w\mf B_0^F$ are clearly independent of the chosen representative $\dot w$ of $w$, we will write $\mf B_0w\mf B_0:=\mf B_0\dot w\mf B_0$ and $\mf B_0^Fw\mf B_0^F:=\mf B_0^F\dot w\mf B_0^F$ from now on. In \cite[4.5]{LuWeylUni}, Lusztig defines a surjective map
\[\{\text{Conjugacy classes of }\mf W\}\rightarrow\{\text{Unipotent conjugacy classes of }\mf G\},\quad \Xi\mapsto\cO_\Xi,\]
which is characterised by the following property (see \cite[0.4]{LuWeylUni} and \cite[4.8]{LuEllip}): For any conjugacy class $\Xi$ of $\mf W$ and any $w\in\Xi$ which is of minimal length among the elements of $\Xi$ (with respect to the standard length function on $(\mf W,S)$), the following two conditions hold.
\begin{enumerate}[label=(\roman*)]
\item We have $\cO_\Xi\cap\mf B_0w\mf B_0\neq\varnothing$;
\item for any unipotent class $\cO\subseteq\mf G$ satisfying $\cO\cap\mf B_0w\mf B_0\neq\varnothing$, we have $\cO_\Xi\subseteq\overline{\cO}$.
\end{enumerate}
(This formulation is indeed equivalent to the one in \cite[4.8]{LuEllip}, see \cite[Remark~3.8]{GCH}; thus, by the argument in \cite[0.2]{LuWeylUni}, the conditions (i) and (ii) above are independent of the choice of $w\in\Xi$, as long as $w$ is of minimal length.) For example, the trivial class of $\mf W$ is sent to the trivial class of $\mf G$; the class of $\mf W$ containing the Coxeter elements is sent to the regular unipotent class of $\mf G$. Note that, since the number of conjugacy classes of $\mf W$ is larger than the number of unipotent classes of $\mf G$, Lusztig's map cannot be injective. The assignment $\Xi\mapsto\cO_\Xi$ can be explicitly obtained using Michel's {\sffamily CHEVIE}, see \cite[\S 6]{MiChv}. Given a unipotent class $\cO\subseteq\mf G$, let $w\in\mf W$ be an element of minimal length with the property that the conjugacy class of $w$ is sent to $\cO$ under Lusztig's map. (In particular, $w$ is of minimal length in its own conjugacy class.) This additional requirement determines the conjugacy class of $w$ uniquely (as we can see from the output of the {\sffamily CHEVIE} function mentioned above), and we write $w\looparrowright\cO$ for any such $w$.
\end{0}

\begin{Def}[cf.\ {\cite[3.2.22--3.2.23]{HDiss}}]\label{Defgood}
In the setting and with the notation of \ref{LuMap}, assume that $w\looparrowright\cO$. We say that $u_0\in\cO^F$ is \emph{good} if the following two conditions are satisfied.
\begin{enumerate}[label=(\alph*)]
\item $F$ acts trivially on $A_\mf G(u_0)$;
\item $u_0$ is $\mf G^F$-conjugate to an element of $\mf B_0^Fw\mf B_0^F$.
\end{enumerate}
\end{Def}

This definition is clearly independent of the choice of $u_0$ in its $\mf G^F$-conjugacy class, so we will sometimes also speak of a good $\mf G^F$-class. Our arguments in Sections \ref{p5}--\ref{p2} below will prove the following remarkable result (cf.\ \cite[Thm.~4.2]{LuEllip}, \cite[Thm.~5.2]{LuWeylUni}).
\begin{Thm}\label{Thmgood}
Let $\cO$ be a unipotent conjugacy class of $\mf G$ which does not appear in the following list:
\begin{enumerate}[label=(\alph*)]
\item $\dt D_8(a_3)$ (for $q\equiv-1\mod3$), $\dt E_7(a_1)+\dt A_1$, $\dt D_6(a_1)+\dt A_1$ when $p\geqslant5$;
\item $\dt E_7(a_1)+\dt A_1$, $\dt D_6(a_1)+\dt A_1$ when $p=3$;
\item $\dt D_8(a_3)$ (for $q\equiv-1\mod3$) when $p=2$.
\end{enumerate}
Then $\cO^F$ contains exactly one good $\mf G^F$-conjugacy class.
\end{Thm}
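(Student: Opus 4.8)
The plan is to verify the statement class-by-class over the (roughly seventy) unipotent classes $\cO$ of $\mf G$, organised according to the characteristic $p$ exactly as in Sections \ref{p5}--\ref{p2}. For a fixed $\cO$ outside the exceptional list, the assertion splits into an existence and a uniqueness statement for a good $\mf G^F$-class inside $\cO^F$. The first preparatory step is, for each $\cO$, to pin down the conjugacy class of $w\looparrowright\cO$: using the description of Lusztig's map in \ref{LuMap} together with {\sffamily CHEVIE}, I would tabulate the minimal-length element $w$ whose $\mf W$-class is sent to $\cO$. Simultaneously I would record the component group $A_\mf G(u_\cO)$ and its centre $Z(A_\mf G(u_\cO))$, since these control which $\mf G^F$-classes in $\cO^F$ are eligible to be good.

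The key reduction for condition~(a) is the following observation. If $F$ acts trivially on $A_\mf G(u_\cO)$, then for $a\in A_\mf G(u_\cO)$ the Frobenius acts on $A_\mf G((u_\cO)_a)\cong A_\mf G(u_\cO)$ by conjugation with $a$; hence $(u_\cO)_a$ satisfies condition~(a) of \Cref{Defgood} if and only if $a\in Z(A_\mf G(u_\cO))$. Thus the candidate good classes are exactly those indexed by the central elements of $A_\mf G(u_\cO)$, and the theorem amounts to showing that precisely one such central class also satisfies condition~(b), i.e.\ meets the Bruhat cell $\mf B_0^Fw\mf B_0^F$. For many classes $A_\mf G(u_\cO)$ is trivial or has trivial centre (for instance when it is one of $S_3,S_4,S_5$), so only the \enquote{split} class $a=1$ is eligible and uniqueness is automatic once existence is shown; the substance then lies in condition~(b).

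To control condition~(b) I would use the reinterpretation of Ree's formula advertised in the introduction, which expresses each intersection number $|(u_\cO)_a^{\mf G^F}\cap\mf B_0^Fw\mf B_0^F|$ as a linear combination, over unipotent characters, of their values at $(u_\cO)_a$ with explicitly known coefficients coming from characters of the Iwahori--Hecke algebra evaluated at $T_w$. The decisive structural input is the minimality of $w$ via property~(ii) of \ref{LuMap}: $\cO$ is the unique minimal unipotent class (in the closure order) meeting $\mf B_0w\mf B_0$, so in the Green-function expansion of these intersection numbers the leading contribution is governed by $\cO$ itself. Since the ordinary and generalised Green functions of $\dt E_8(q)$ are known in every characteristic (Beynon--Spaltenstein, Lübeck), one can compute the total count $|\cO^F\cap\mf B_0^Fw\mf B_0^F|$ and, via the generalised Springer correspondence, its distribution among the individual $\mf G^F$-classes; comparing with the combinatorics of $A_\mf G(u_\cO)$ then isolates which central classes are hit.

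The hard part will be carrying out this last step in bad characteristic ($p=2,3,5$), where the relevant roots of unity in Lusztig's algorithm are genuinely unknown, so the intersection numbers cannot simply be read off. Here I would lean on integrality and positivity of the counts $|(u_\cO)_a^{\mf G^F}\cap\mf B_0^Fw\mf B_0^F|$ together with the known Green-function data to constrain the unknowns enough to determine the supports, cross-checking in good characteristic against the split elements of Beynon--Spaltenstein (cf.\ \Cref{Corsplitgood}). I expect the genuine exceptions to be precisely the classes with a larger centre or a Frobenius-twisted component group, where either two central classes meet the cell or none does after twisting; these are exactly $\dt D_8(a_3)$ (for $q\equiv-1\bmod3$), $\dt E_7(a_1)+\dt A_1$ and $\dt D_6(a_1)+\dt A_1$ in the characteristics indicated, and singling them out --- showing uniqueness fails there but holds everywhere else --- is the most delicate bookkeeping in the argument.
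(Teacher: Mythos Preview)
Your proposal is essentially the paper's own strategy: a class-by-class analysis organised by $p$ and by the isomorphism type of $A_\mf G(u_\cO)$, using the Ree-type identity \eqref{HeckeFormula} at an element $w\looparrowright\cO$ and the non-negativity of the resulting intersection numbers $m((u_\cO)_a,w)$ to single out the good $\mf G^F$-class. Your reduction of condition~(a) to $a\in Z(A_\mf G(u_\cO))$ is correct and is exactly what the paper uses (implicitly) when $A_\mf G(u_\cO)\cong\mathfrak S_3$ or $\mathfrak S_5$.

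One point deserves sharpening. You propose to feed in the known Green functions of \cite{BeSp}, \cite{Lueb} to compute the distribution of $\cO^F\cap\mf B_0^Fw\mf B_0^F$ over the $\mf G^F$-classes, treating the bad-characteristic case as the hard residual. But those tables are normalised relative to a \emph{chosen} class representative (split elements in good characteristic), and that choice is precisely what you are trying to identify; so there is a circularity to avoid. The paper sidesteps this uniformly in all characteristics: it writes each $m((u_\cO)_a,w)$ as an explicit linear expression in the unknown roots of unity $\delta_{(u_\cO,\varsigma)}$, $\zeta_\jj\gamma_{(u_\cO,\varsigma)}$, $\xi_{(u_\cO,\varsigma)}$ (with constant term $1$ from $\delta_{(u_\cO,1)}=+1$), and then uses only $m((u_\cO)_a,w)\geqslant0$ together with the fact that $\sum_a m((u_\cO)_a,w)>0$ to force all roots of unity to be $+1$ and to see that exactly one $a$ gives a strictly positive value. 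This makes the proof of \Cref{Thmgood} independent of \cite{BeSp}, \cite{Lueb} except at the excluded classes---indeed the paper advertises this as an independent verification of those Green-function computations. Your route would still succeed, but it imports results the paper is partly reproving. (Minor slip: $\mathfrak S_4$ does not occur as a component group here; see \ref{Method}.)
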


\begin{Rem}\label{Remsplit}
In good characteristic (that is, $p\geqslant7$), there is the notion of \enquote{split} unipotent elements, due to Beynon--Spaltenstein \cite[p.~590]{BeSp}: An element $u\in\mf G^F_{\mathrm{uni}}$ is called split if $F$ acts trivially on the set of all irreducible components of the variety $\scB_u$ consisting of the Borel subgroups of $\mf G$ which contain $u$. If $u\in\mf G^F_{\mathrm{uni}}$ is split, then any $\mf G^F$-conjugate of $u$ is split as well, and $F$ acts trivially on $A_\mf G(u)$. Moreover, with the exception of $\dt D_8(a_3)$ when $q\equiv-1\mod3$, any unipotent conjugacy class of $\mf G$ contains a unique $\mf G^F$-class consisting of split elements (while there exist no split elements in $\dt D_8(a_3)$ when $q\equiv-1\mod3$).
\end{Rem}

The following result is a consequence of \Cref{Thmgood} and the computations that we will perform in \Cref{p5} (see \ref{ProofCorsplitgood} for the proof).

\begin{Cor}\label{Corsplitgood}
Let $p\geqslant7$, and let $u_0\in\mf G^F_{\mathrm{uni}}$ a unipotent element which does not lie in $\dt E_7(a_1)+\dt A_1$ or $\dt D_6(a_1)+\dt A_1$. Then $u_0$ is split if and only if it is good. On the other hand, both $(\dt E_7(a_1)+\dt A_1)^F$ and $(\dt D_6(a_1)+\dt A_1)^F$ are the union of two $\mf G^F$-conjugacy classes, each of which is good (but only one of which is split).
\end{Cor}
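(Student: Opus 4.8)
The plan is to combine the two uniqueness statements already available --- \Cref{Thmgood} for good classes and \Cref{Remsplit} for split classes --- with the explicit, class-by-class identification of good elements that will be carried out in \Cref{p5}. The first observation is that the two notions share condition (a) of \Cref{Defgood}: a split element has $F$ acting trivially on $A_\mf G(u)$ by \Cref{Remsplit}, and this is built into the definition of a good element. Hence, fixing a unipotent class $\cO$ with representative $u_\cO$ as in \ref{LuMap}, both the split and the good $\mf G^F$-subclasses of $\cO^F$ occur among the classes $(u_\cO)_a$ indexed by the conjugacy classes $a$ of $A_\mf G(u_\cO)$. Proving \enquote{split $\Leftrightarrow$ good} therefore reduces to checking that the two notions single out the same values of $a$.

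For a class $\cO$ not appearing in the list of \Cref{Thmgood} --- which for $p\geqslant7$ consists precisely of $\dt D_8(a_3)$ (when $q\equiv-1\mod3$), $\dt E_7(a_1)+\dt A_1$ and $\dt D_6(a_1)+\dt A_1$ --- that theorem yields a \emph{unique} good $\mf G^F$-class, while \Cref{Remsplit} yields a \emph{unique} split $\mf G^F$-class. It therefore suffices to verify, for a single representative, that the good and the split class agree; the two uniqueness statements then force the corresponding $\mf G^F$-classes to coincide for every such $\cO$. This agreement is exactly what the computations of \Cref{p5} supply: there, for each class $\cO$, the good $\mf G^F$-class is pinned down as a specific parameter $a\in A_\mf G(u_\cO)$ through condition (b) of \Cref{Defgood} (that is, through the Bruhat cell $\mf B_0^Fw\mf B_0^F$ with $w\looparrowright\cO$), and this is matched against Beynon--Spaltenstein's determination of the split class in \cite{BeSp}.

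It remains to treat the exceptional classes by hand. For $u_0\in\dt D_8(a_3)$ with $q\equiv-1\mod3$ --- a case \emph{not} excluded from the first assertion of the corollary --- there are no split elements by \Cref{Remsplit}; I would invoke the computation of \Cref{p5} to show that there are likewise no good elements, which is precisely why this class (for $q\equiv-1\mod3$) appears in the list of \Cref{Thmgood}, so that \enquote{split $\Leftrightarrow$ good} holds vacuously. (When $q\equiv1\mod3$, the class $\dt D_8(a_3)$ already falls under the generic case above.) Finally, for $\dt E_7(a_1)+\dt A_1$ and $\dt D_6(a_1)+\dt A_1$ I would read off from \Cref{p5} that $A_\mf G(u_\cO)$ contributes exactly two $\mf G^F$-classes, that \emph{both} satisfy conditions (a) and (b) of \Cref{Defgood} --- which is exactly why these two classes are excluded from the uniqueness assertion of \Cref{Thmgood} --- while Beynon--Spaltenstein's criterion marks only one of them as split. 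This yields the second assertion.

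The main obstacle is not the logical skeleton, which is a clean uniqueness-plus-matching argument, but the explicit input feeding it: one must know $A_\mf G(u_\cO)$ for each relevant class and decide, for each parameter $a$, whether $(u_\cO)_a$ meets the correct Bruhat cell and whether $F$ fixes the irreducible components of $\scB_{(u_\cO)_a}$. The genuinely delicate cases are the three where the two notions diverge or simultaneously vanish: the two classes $\dt E_7(a_1)+\dt A_1$ and $\dt D_6(a_1)+\dt A_1$, where \enquote{good} turns out to be strictly weaker than \enquote{split}, and $\dt D_8(a_3)$ for $q\equiv-1\mod3$, where both notions are empty. Establishing the exact count of good classes in these cases is the crux, and it is what the computations in \Cref{p5} must deliver.
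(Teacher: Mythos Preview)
Your skeleton matches the paper's exactly: reduce to uniqueness of good (\Cref{Thmgood}) and of split (\Cref{Remsplit}), then match the two for each $\cO$, treating the three exceptional classes separately. The handling of $\dt E_7(a_1)+\dt A_1$, $\dt D_6(a_1)+\dt A_1$ and $\dt D_8(a_3)$ (for $q\equiv-1\mod3$) is also the same as in \ref{ProofCorsplitgood}.

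Where you are vaguer than the paper is in the matching step for the generic $\cO$. You write that the good class is \enquote{pinned down as a specific parameter $a$} in \Cref{p5} and then \enquote{matched against Beynon--Spaltenstein's determination}. But \Cref{p5} never identifies an explicit $a$; what it actually shows is that the good $u_0$ is characterised by $\delta_{(u_0,\varsigma)}=+1$ for every $\varsigma\in\Irr(A_\mf G(u_0))$. The paper then invokes \cite[p.~590]{BeSp}, where it is stated that split elements also satisfy $\delta_{(u_0,\varsigma)}=+1$ for all $\varsigma$, and concludes by uniqueness. So the bridge between the two notions is this $\delta$-criterion, not a class-by-class comparison of parameters. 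You should name this criterion explicitly; without it, your sentence \enquote{this is matched against \ldots} is a promise rather than an argument.

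A related point: the $\delta$-criterion only applies when every $(u_\cO,\varsigma)$ lies in the ordinary Springer correspondence, which fails for $2\dt A_4$ (one pair is cuspidal). The paper therefore treats $2\dt A_4$ and $\dt D_8(a_3)$ (for $q\equiv1\mod3$) by a shorter route: there is a \emph{unique} $\mf G^F$-class in $\cO^F$ on which $F$ acts trivially on $A_\mf G$, and since both split and good elements have this property, they coincide without further work. Your generic argument folds these two classes into the rest, which is harmless logically, but you should be aware that the $\delta$-matching does not literally cover them.
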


\begin{Rem}
Hence, with respect to the vast majority of unipotent conjugacy classes of $\mf G$ in good characteristic, the notions of \enquote{split} and \enquote{good} elements turn out to coincide. At least for the unipotent classes which are not excluded in \Cref{Thmgood}, our definition of good elements may thus be seen as an extension of the notion of split elements to the bad characteristic case.
\end{Rem}

\section{Ree's formula and (generalised) Green functions}\label{ReeGreen}

In this section, we recall the theory on unipotent (almost) characters, character sheaves and the generalised Springer correspondence which is needed to describe Lusztig's algorithm to compute the generalised Green functions. We also describe a reinterpretation of a formula of Ree (due to \cite{LuFlag}, \cite{GCH}) relating the values of the unipotent almost characters of $\mf G^F$ with characters of a Hecke algebra associated to $\mf G^F$ (see \ref{Ree}), which will be the core of our method to determine the values of the unipotent characters at unipotent elements (see \ref{Method}). We conclude by stating a result (\Cref{Roots1}) yielding the values of the unipotent characters at unipotent elements of $\mf G^F$; this will be proven in Sections \ref{p5}--\ref{p2} below.

\begin{0}{\bf Unipotent (almost) characters.}\label{E8Setup}
The results of \cite[Chap.~4]{Luchars} provide a parameter set $X(\mf W)$ (only depending on $\mf W$, and not on $p$ or $q$) for both the unipotent characters and the unipotent ``almost characters'' of $\mf G^F$. Any unipotent character of $\mf G^F$ is expressed as an explicit linear combination of unipotent almost characters $R_x$, $x\in X(\mf W)$, by means of Lusztig's ``(non-abelian) Fourier transform matrix''. In particular, the computation of unipotent characters at unipotent elements is equivalent to the computation of the $R_x|_{\mf G^F_{\mathrm{uni}}}$ for $x\in X(\mf W)$. Furthermore, there is a certain embedding $\Irr(\mf W)\hookrightarrow X(\mf W)$, $\phi\mapsto x_\phi$ \cite[(4.21.3)]{Luchars}, and the principal series unipotent character $[\phi]$ of $\mf G^F$ parametrised by $\phi\in\Irr(\mf W)$ is expressed as
\[[\phi]=\sum_{x\in X(\mf W)}\{x_\phi,x\}R_x\]
where the $\{x_\phi,x\}$ are entries of the aforementioned Fourier matrix. By \cite[4.13]{Luchars}, we have $|X(\mf W)|=166$ and $|\Irr(\mf W)|=112$. The computation of the almost characters $R_\phi=R_{x_\phi}$ (for $\phi\in\Irr(\mf W)$) at unipotent elements is equivalent to the one of the Green functions. In good characteristic, this has been established by Beynon--Spaltenstein \cite{BeSp}; when $p\leqslant5$, the values of the $R_\phi$ are known as well by now \cite{Lueb} (see also \cite{GcompGreen} for one particular case when $p=2$). With very few exceptions, our arguments will provide an independent verification of these results. Other than that, it remains to consider the $54$ elements $x\in X(\mf W)\setminus\Irr(\mf W)$.
\end{0}

\begin{0}{\bf A reinterpretation of Ree's formula.}\label{Ree}
Associated with the Weyl group $\mf W$ is a generic Iwahori--Hecke algebra $\cH$ which we may (and will) assume to be taken over $\Qlbar[\mf q]$ (where $\mf q$ is an indeterminate). $\cH$ has a standard basis $T_w$, $w\in\mf W$ (see, for example, \cite[\S 11D]{CR1} or \cite[\S 8.4]{GePf}). By means of specialising $\cH$ at $q$ (according to the $\Fq$-rational structure on $\mf G$ defined by $F$), we obtain an Iwahori--Hecke algebra $\cH_q$, whose multiplication is determined by the following equations, where $w\in\mf W$, $s\in S$, and $\length\colon\mf W\rightarrow\N_0$ is the length function of $\mf W$ with respect to $S$:
\[T_w\cdot T_{s}=\begin{cases}\hfil T_{ws}&\text{if}\quad\length(ws)=\length(w)+1, \\ qT_{ws}+(q-1)T_{w}&\text{if}\quad\length(ws)=\length(w)-1.
\end{cases}\]
Each $\phi\in\Irr(\mf W)$ parametrises an irreducible character $\phi_q$ of $\cH_q$. For $u\in\mf G^F_{\mathrm{uni}}$, let us denote by $O_u\subseteq\mf G^F_{\mathrm{uni}}$ the $\mf G^F$-conjugacy class of $u$. By \cite[Rem.~3.6(b)]{GCH}, we have the identity
\begin{equation*}
\frac{|\mf B_0^Fw\mf B_0^F\cap O_u|\cdot|C_{\mf G^F}(u)|}{|\mf B_0^F|}=\sum_{\phi\in \Irr(\mf W)} 
\phi_q(T_w)[\phi](u)\qquad \text{for any } w\in\mf W
\end{equation*}
(which already appeared in \cite[1.5(a)]{LuFlag}). Setting
\[m(u,w):=\sum_{x\in X(\mf W)}c_x(w)R_x(u),\quad\text{with}\quad c_x(w):=\sum_{\phi\in\Irr(\mf W)}\{x_\phi,x\}\phi_q(T_w),\]
we thus get
\begin{equation}\label{HeckeFormula}
m(u,w)=\sum_{x\in X(\mf W)}c_x(w)R_x(u)=\frac{|\mf B_0^Fw\mf B_0^F\cap O_u|\cdot|C_{\mf G^F}(u)|}{|\mf B_0^F|}\qquad \text{for }\;w\in\mf W.
\tag{$\spadesuit$}
\end{equation}
Since the values $\phi_q(T_w)$ are known \cite{GeMi} (and explicitly accessible through {\sffamily CHEVIE} \cite{MiChv}), the only unknowns in this formula are the (sizes of the) intersections $\mf B_0^Fw\mf B_0^F\cap O_u$ and the values $R_x(u)$ for $x\in X(\mf W)$. To get information on $\mf B_0^Fw\mf B_0^F\cap O_u$, we will (as suggested by Definition~\ref{Defgood}) try to evaluate the above equation for $w\in\mf W$ such that $w\looparrowright\cO$, with $\cO$ the conjugacy class of $\mf G$ containing $u$. As for the values of $R_x(u)$, we now explain how the computation of the $R_x|_{\mf G^F_{\mathrm{uni}}}$ for $x\in X(\mf W)$ can be carried out \emph{up to certain roots of unity}, using Lusztig's algorithm provided in \cite[Chap.~24]{LuCS5} combined with the explicit knowledge of the generalised Springer correspondence \cite{LuIC}, \cite{Sp}, \cite{HSpring}.
\end{0}

\begin{Rem}\label{Criteriongood}
Let $\cO\subseteq\mf G$ be a unipotent class, and assume that there exists a unique $\mf G^F$-conjugacy class in $\cO^F$ which intersects the Bruhat cell $\mf B_0^Fw\mf B_0^F$ non-trivially for $w\looparrowright\cO$. (As already stated in \Cref{Thmgood}, this will turn out to be true in the vast majority of cases.) Then, denoting by $u_0\in\cO^F$ an element of this class (that is, $u_0$ satisfies condition (b) in \Cref{Defgood}), we have $\mf B_0^Fw\mf B_0^F\cap O_{u_0}=(\mf B_0w\mf B_0\cap\cO)^F$ and the number $|(\mf B_0w\mf B_0\cap\cO)^F|$ can be computed explicitly, see \cite[\S 5]{GCH}. Hence, in view of formula \eqref{HeckeFormula} in \ref{Ree}, the number $m(u_0,w)$ (for $w\looparrowright\cO$) and the centraliser order $|C_{\mf G^F}(u_0)|$ determine one another. So if $\cO$ is a class as above and if we manage to compute $m(u_0,w)$ for a good $u_0\in\cO^F$, we can read off the centraliser order $|C_{\mf G^F}(u_0)|$ from formula \eqref{HeckeFormula} in \ref{Ree}. This will in some cases be a useful tool to deduce that a certain representative $u_0$ which satisfies condition (b) in \Cref{Defgood} also satisfies condition (a) (so that $u_0$ is good).
\end{Rem}

\begin{0}{\bf The generalised Springer correspondence.}\label{GenSpring}
Following \cite{LuIC}, let us define $\cN_\mf G$ to be the set of all pairs $(\cO,\cE)$ where $\cO\subseteq\mf G$ is a unipotent class and $\cE$ is a $\mf G$-equivariant irreducible $\Qlbar$-local system on $\cO$ (taken up to isomorphism), and let $\cM_\mf G$ be the set of all triples $(\mf L,\cO_0,\cE_0)$ up to $\mf G$-conjugacy, where $\mf L\subseteq\mf G$ is the Levi complement of some parabolic subgroup of $\mf G$ and $\cE_0$ is an $\mf L$-equivariant irreducible $\Qlbar$-local system on $\cO_0$ (up to isomorphism) such that $(\cO_0,\cE_0)$ is a ``cuspidal pair'' for $\mf L$ in the sense of \cite[2.4]{LuIC}. Any $\jj=(\mf L,\cO_0,\cE_0)\in\cM_\mf G$ gives rise to a perverse sheaf $K_\jj$ on $\mf G$ whose endomorphism algebra is isomorphic to the group algebra $\Qlbar[W_\mf G(\mf L)]$ of the relative Weyl group $W_\mf G(\mf L)=N_\mf G(\mf L)/\mf L$; see \cite[Theorem~9.2]{LuIC}. Thus, any simple constituent $A$ of $K_\jj$ is naturally parametrised by an irreducible character of $W_\mf G(\mf L)$. On the other hand, for any such $A$, it is shown in \cite[\S 6]{LuIC} that there exists a unique pair $(\cO,\cE)\in\cN_\mf G$ such that
\[A|_{\mf G_{\mathrm{uni}}}\cong\IC(\overline{\cO},\cE)[\dim\mf Z(\mf L)+\dim\cO].\]
Conversely, for any $\ii=(\cO,\cE)\in\cN_\mf G$, there exists a unique $\jj=(\mf L,\cO_0,\cE_0)\in\cM_\mf G$ and a unique simple constituent $A$ of $K_\jj$ whose restriction to $\mf G_{\mathrm{uni}}$ is isomorphic to $\IC(\overline{\cO},\cE)[\dim\mf Z(\mf L)+\dim\cO]$. So we obtain a surjective map $\tau\colon\cN_\mf G\rightarrow\cM_\mf G$, and the fibre $\tau^{-1}(\jj)$ of a given $\jj=(\mf L,\cO_0,\cE_0)\in\cM_\mf G$ is in bijection with $\Irr(W_\mf G(\mf L))$. If $A$ arises from a pair $\ii\in\cN_\mf G$ as described above, we will write $A_\ii:=A$. The collection of the bijections
\begin{equation}\label{GenSpringBij}
\Irr(W_\mf G(\mf L))\xrightarrow{\sim}\tau^{-1}(\jj)\quad(\text{for }\jj=(\mf L,\cO_0,\cE_0)\in\cM_\mf G)\tag{$\heartsuit$}
\end{equation}
thus obtained is referred to as the generalised Springer correspondence.
\end{0}

\begin{0}\label{NG}
The set $\cN_\mf G$ is described in \cite[15.3]{LuIC}; it depends on $p$. We have
\[|\cN_\mf G|=\begin{cases} 113&\text{if }\;p\geqslant7,\\
117&\text{if }\; p=5, \\
127&\text{if }\; p=3, \\
146&\text{if }\;p=2.
\end{cases}\]
Any complex $A_\ii$ associated to $\ii\in\cN_\mf G$ in \ref{GenSpring} is in fact a ``unipotent character sheaf''. By the results of \cite[Chap.~23]{LuCS5} (combined with \cite{LuclCS}), $A_\ii$ is thus parametrised by some $x\in X(\mf W)$ and may also be denoted by $A_x$. We will then say that ``$\ii$ corresponds to $x$ under the generalised Springer correspondence'' and write $\ii\leftrightarrow x$. On the other hand, for any $x\in X(\mf W)$, the character sheaf $A_x$ gives rise to a $\mf G^F$-invariant ``characteristic function'' $\chi_x\colon\mf G^F\rightarrow\Qlbar$ (determined by $A_x$ only up to multiplication with a non-zero scalar), which by \cite[Proposition~4.4]{Sh2} combined with the cleanness results in \cite{LuclCS} coincides with the unipotent almost character $R_x$ (up to a scalar multiple). In particular, we have $R_x|_{\mf G^F_{\mathrm{uni}}}\neq0$ if and only if $\chi_x|_{\mf G^F_{\mathrm{uni}}}\neq0$, and the latter happens precisely when $A_x$ is isomorphic to $A_\ii$ for some $\ii\in\cN_\mf G$ in the setting of \ref{GenSpring}. Hence, to compute the $R_x$ at unipotent elements, it suffices to consider those $x\in X(\mf W)$ which correspond to elements of $\cN_\mf G$ under the generalised Springer correspondence. Moreover, for any of the $112$ irreducible characters $\phi\in\Irr(\mf W)$, there exists some $\ii\in\cN_\mf G$ such that $\ii\leftrightarrow x_\phi$; this is nothing but the ordinary Springer correspondence and as mentioned in \ref{E8Setup}, we have $R_{x_\phi}=R_\phi$ for $\phi\in\Irr(\mf W)$. Identifying $\cN_\mf G$ with a subset of $X(\mf W)$ as described above, we thus have
\[|\cN_\mf G\cap (X(\mf W)\setminus\Irr(\mf W))|=\begin{cases} \hfill1&\text{if }\;p\geqslant7,\\
\hfill5&\text{if }\; p=5, \\
\hfill15&\text{if }\; p=3, \\
34&\text{if }\;p=2.
\end{cases}\]
In the case where $p\geqslant5$, all elements of $\cN_\mf G\cap (X(\mf W)\setminus\Irr(\mf W))$ are ``cuspidal''; if $p=3$, the set $\cN_\mf G\cap (X(\mf W)\setminus\Irr(\mf W))$ consists of $3$ cuspidal and $12$ non-cuspidal elements; if $p=2$, the set $\cN_\mf G\cap (X(\mf W)\setminus\Irr(\mf W))$ consists of $5$ cuspidal and $29$ non-cuspidal elements (see again \cite[15.3]{LuIC}).
\end{0}

The following notation will be convenient: As the isomorphism classes of $\mf G$-equivariant irreducible $\Qlbar$-local systems on a given unipotent class $\cO\subseteq\mf G$ are naturally parametrised by the irreducible characters of $A_\mf G(u_\cO)$, we may (and will in many cases) label the elements of $\cN_\mf G$ by $(u_\cO,\varsigma)$ with $\varsigma\in\Irr(A_\mf G(u_\cO))$.

\begin{0} {\bf Characteristic functions of cuspidal character sheaves.} \label{CuspCS}
Let $\ii=(u_\cO,\varsigma)\in\cN_\mf G$, $\ii\leftrightarrow x\in X(\mf W)$, and assume that $A_x$ is a cuspidal (unipotent) character sheaf on $\mf G$. As mentioned in \ref{NG}, the characteristic function $\chi_x$ associated to $A_x$ is in general only defined up to a scalar multiple. However, a fixed choice of $u_\cO\in\cO^F$ uniquely determines a characteristic function $\chi_x^0\colon\mf G^F\rightarrow\Qlbar$ (see, e.g., \cite[\S 3]{Luvaluni}). It is given by
\[\chi_x^0(g)=\chi_{u_\cO,\varsigma}(g)=\begin{cases}q^{\frac12\dim{C_\mf G(u_\cO)}}\varsigma(a)&\text{if \;$g$ is $\mf G^F$-conjugate to $(u_\cO)_a$ with $a\in A_\mf G(u_\cO)$}, \\
\hfil0&\text{if }\;g\notin\cO^F.
\end{cases}\]
The function $\chi_x^0$ satisfies the requirement in \cite[25.1--25.2]{LuCS5}, so Shoji's Theorem \cite[4.1]{Sh2} shows that we have $R_x=\xi_x\chi_x^0$ for some \emph{root of unity} $\xi_x\in\Qlbar$ depending (only) on the choice of $u_\cO\in\cO^F$; we will thus sometimes also write $\xi_{(u_\cO,\varsigma)}:=\xi_x$.
\end{0}

\begin{0} {\bf \enquote{Intermediate} characteristic functions.} \label{Intermed}
As noted in \ref{NG}, in the case where $p\leqslant3$, there exist non-cuspidal elements $\ii\leftrightarrow x$ in $\cN_\mf G\cap(X(\mf W)\setminus\Irr(\mf W))$ for which $R_x|_{\mf G^F_{\mathrm{uni}}}\neq0$. In order to get our hands on these functions, we need to exploit the full power of Lusztig's algorithm \cite[\S 24]{LuCS5}, combined with the explicit knowledge of the bijection \eqref{GenSpringBij} in \ref{GenSpring}.

So let $\ii=(\cO,\cE)\leftrightarrow x$ be one of those \enquote{intermediate} elements. With the notation of \ref{GenSpring}, we then have $\tau(\ii)=\jj=(\mf L,\cO_0,\cE_0)\in\cM_\mf G$ where $\mf L\subseteq\mf G$ is of type $\dt E_6$ (when $p=3$) or of type $\dt E_7$ or $\dt D_4$ (when $p=2$), and $(\cO_0,\cE_0)$ is a cuspidal pair for $\mf L$. In all of these cases, $\cO_0$ is the regular unipotent class of $\mf L$. We will then often just write $\jj=(\dt E_7,\cO_0,\cE_0)$, $\jj=(\dt E_6,\cO_0,\cE_0)$, $\jj=(\dt D_4,\cO_0,\cE_0)$ if $\mf L$ is of type $\dt E_7$, $\dt E_6$, $\dt D_4$, respectively. While there is a unique cuspidal pair for $\mf L$ of type $\dt D_4$ (and $p=2$), there are two cuspidal pairs for $\mf L$ either of type $\dt E_6$ (and $p=3$) or of type $\dt E_7$ (and $p=2$). For $n\in\{6,7\}$ and a fixed representative $u_{\dt E_n}\in\cO_0\subseteq\mf L$, the isomorphism classes of $\mf L$-equivariant irreducible $\Qlbar$-local systems on $\cO_0$ are naturally parametrised by the irreducible characters of $A_\mf L(u_{\dt E_n})$, and it will be convenient to write $(\dt E_n,u_{\dt E_n},\varsigma):=(\mf L,\cO_0,\cE_0)$ if $\varsigma\in\Irr(A_\mf L(u_{\dt E_n}))$ corresponds to $\cE_0$.

Assuming that an isomorphism $F^\ast\cE_0\xrightarrow{\sim}\cE_0$ is chosen (which may be achieved by means of singling out a $\mf G^F$-conjugacy class contained in $\cO_0^F$), Lusztig \cite[24.2]{LuCS5} specifies a $\mf G^F$-invariant function $X_\ii\colon\mf G^F_{\mathrm{uni}}\rightarrow\Qlbar$. As in \cite[3.2.25]{HDiss} (see \cite[\S 3]{Luvaluni}), this allows us to define a $\mf G^F$-invariant function $\chi_x^0\colon\mf G^F\rightarrow\Qlbar$ by
\[\chi_x^0(g):=\begin{cases}q^{(\dim\mf G-\dim\cO-\dim\mf Z(\mf L))/2}X_\ii(g) &\text{if }g\in\mf G^F_{\mathrm{uni}},\\
\hfil0&\text{if }g\in\mf G^F\setminus\mf G^F_{\mathrm{uni}}.
\end{cases}\]
Then $\chi_x^0$ is a characteristic function of the character sheaf $A_x$ which meets the requirement in \cite[25.1--25.2]{LuCS5}. By \cite[Proposition~4.4]{Sh2}, we have $R_x=\xi_x\chi_x^0$ for some root of unity $\xi_x\in\Qlbar$.~---~While $\xi_x$ depends upon the choice of a $\mf G^F$-conjugacy class contained in $\cO_0^F$ as mentioned above, it does not depend on $x$ as long as $x$ gives rise to a fixed triple $\jj=(\mf L,\cO_0,\cE_0)$; this follows from \cite[3.5]{Luvaluni} (see \cite[3.4.23]{HDiss}). So we can write $\zeta_{\jj}:=\xi_x$ for any $x\leftrightarrow\ii$ such that $\tau(\ii)=\jj$. The computation of the function $X_\ii$ is described in \cite[Chap.~24]{LuCS5}. To explain it, we use the notation in \cite[1.3]{ShGreen1}: For $\ii'=(u_{\cO'},\varsigma')\in\cN_\mf G$, consider the function $Y_{\ii'}^0\colon\mf G^F_{\mathrm{uni}}\rightarrow\Qlbar$ defined by
\[Y_{\ii'}^0(u):=\begin{cases}
\hfil \varsigma'(a) &\text{if $u$ is $\mf G^F$-conjugate to $(u_{\cO'})_a$ with $a\in A_\mf G(u_{\cO'})$},  \\
\hfil0 &\text{if }u\notin\cO'^F.
\end{cases}\]
Then there exist roots of unity $\gamma_{\ii'}\in\Qlbar$ ($\ii'\in\cN_\mf G$) such that
\[X_{\ii}=\sum_{\ii'\in\cN_\mf G}p_{\ii',\ii}\gamma_{\ii'}Y_{\ii'}^0,\]
where the coefficients $p_{\ii',\ii}$ are rational integers which are given by means of a purely combinatorial algorithm \cite[Chap.~24]{LuCS5}. This algorithm may be accessed through Michel's {\sf CHEVIE} \cite{MiChv}. In fact, we always have $p_{\ii,\ii}=1$; moreover, if $\ii'=(u_{\cO'},\varsigma')$ is such that $p_{\ii',\ii}\neq0$, then $\tau(\ii')=\tau(\ii)=(\mf L,\cO_0,\cE_0)$ and $\cO'\subseteq\overline\cO$. To summarise, we have
\begin{align*}
R_x(u)&=\zeta_{\jj}\chi_x^0(u)=\zeta_{\jj}q^{(\dim\mf G-\dim\cO-\dim\mf Z(\mf L))/2}X_\ii(u) \\
&=\zeta_{\jj}q^{(\dim\mf G-\dim\cO-\dim\mf Z(\mf L))/2}\sum_{\ii'\in\tau^{-1}(\jj)}p_{\ii',\ii}\gamma_{\ii'}Y_{\ii'}^0(u)
\end{align*}
for any $u\in\mf G^F_{\mathrm{uni}}$, so the determination of $\zeta_{\jj}\gamma_{\ii'}$ for $\ii'\in\tau^{-1}(\jj)$ would yield the values of $R_x|_{\mf G^F_{\mathrm{uni}}}$ for any $x\leftrightarrow\ii\in\tau^{-1}(\jj)\subseteq\cN_\mf G\cap(X(\mf W)\setminus\Irr(\mf W))$.
\end{0}

\begin{0}{\bf The Green functions.} \label{Green}
For elements $\phi\in\Irr(\mf W)\subseteq X(\mf W)$, we have much more information on the values of the corresponding almost characters $R_{x_\phi}=R_\phi$. Given $\phi,\phi'\in\Irr(\mf W)$ and the associated elements $\ii=(u_\cO,\varsigma)$, $\ii'=(u_{\cO'},\varsigma')\in\cN_\mf G$ under the (ordinary) Springer correspondence, let us write
\[p_{\phi',\phi}:=p_{\ii',\ii}\text{ (cf.\ \ref{Intermed})}\quad\text{and}\quad d_\phi:=\tfrac12(\dim\mf G-\dim\cO-\dim\mf T_0),\]
and let
\[Y_{\phi'}^0\colon\mf G^F_{\mathrm{uni}}\rightarrow\Qlbar,\;u\mapsto\begin{cases}
\hfil \varsigma'(a) &\text{if $u$ is $\mf G^F$-conjugate to $(u_{\cO'})_a$ for some $a\in A_\mf G(u_{\cO'})$},  \\
\hfil0 &\text{if }u\notin\cO'^F.
\end{cases}\]
For $\phi\in\Irr(\mf W)$, we then have
\[R_\phi|_{\mf G^F_{\mathrm{uni}}}=\sum_{\phi'\in\Irr(\mf W)}q^{d_\phi}p_{\phi',\phi}\delta_{\phi'}Y_{\phi'}^0\]
where $\delta_{\phi'}\in\{\pm1\}$; see, e.g., \cite[\S 2, \S 3]{Gcompvaluni}. (In fact, in this special situation, an algorithm to determine the $p_{\phi',\phi}$ has been developed earlier by Shoji, see \cite{ShArc}.) So in order to compute the $R_\phi|_{\mf G^F_{\mathrm{uni}}}$, it only remains to determine the signs $\delta_{\phi'}$ for $\phi'\in\Irr(\mf W)$. Note that $\delta_{\phi'}$ depends upon the choice of $u_{\cO'}$; we will thus often also write $\delta_{(u_{\cO'},\varsigma')}:=\delta_{\phi'}$.

Given any unipotent class $\cO\subseteq\mf G$, the pair $(u_\cO,1)\in\cN_\mf G$ always corresponds to an irreducible character of $\mf W$ under the bijection \eqref{GenSpringBij} in \ref{GenSpring}. In fact, it directly follows from the Lusztig--Shoji algorithm to compute the $p_{\phi',\phi}$ that we have $\delta_{(u_\cO,1)}=+1$; see the argument in \cite[2.7, 3.4]{GcompGreen}. 
\end{0}

\begin{0}{\bf Our method.}\label{Method}
The principal idea to compute the values of the $R_x$ at $F$-stable elements $u$ of a given unipotent class $\cO\subseteq\mf G$ is to first evaluate $R_x(u)$ up to some unknown roots of unity as described in \ref{CuspCS}--\ref{Green} and then apply the formula \eqref{HeckeFormula} in \ref{Ree} to obtain information on these roots of unity, using the fact that we have $m(u,w)\geqslant0$ for any $u\in\cO^F$, $w\in\mf W$, and $m(u_0,w)>0$ if $w\looparrowright\cO$ and $u_0\in\cO^F$ satisfies condition (b) in \Cref{Defgood}. Note that, in view of the discussion in \ref{CuspCS}--\ref{Green}, we only need to determine those $\xi_x$, $\zeta_\jj\gamma_{\ii'}$ and $\delta_\phi$ for which $x$, $\ii'\in\tau^{-1}(\jj)$ and $\phi$ correspond to an element of the form $(u_\cO,\varsigma)\in\cN_\mf G$ under the generalised Springer correspondence. So there are $|\Irr(A_\mf G(u_\cO))|$ many roots of unity to be specified in order to get the full set of values $R_x|_{\cO^F}$ for all $x\in X(\mf W)$. The numbers $c_x(w)$ which appear in formula \eqref{HeckeFormula} in \ref{Ree} can be computed automatically with {\sffamily CHEVIE} \cite{MiChv}.

So our strategy is to go through the unipotent classes $\cO\subseteq\mf G$ one by one and determine the emerging roots of unity as described above. The computations will primarily depend on the isomorphism type of the group $A_\mf G(u_\cO)$ and the (multiplicities of the) different $\jj\in\cM_\mf G$ corresponding to the pairs $(u_\cO,\varsigma)$ under the bijection \eqref{GenSpringBij} in \ref{GenSpring}, so it makes sense to organise our argumentation accordingly. The simplest case occurs when $A_\mf G(u_\cO)=\{1\}$: Here we only need to consider the pair $(u_\cO,1)$, and as mentioned in \ref{Green}, we always have $\delta_{(u_\cO,1)}=+1$, so everything is already known for any such class $\cO$. In general, the possibilities for the groups $A_\mf G(u_\cO)$ are the following:
\[\Z/n\Z\;\text{ with }\;1\leqslant n\leqslant6,\quad\mathfrak S_3,\quad\mathfrak S_5,\quad D_8,\quad\Z/2\Z\times\Z/2\Z,\quad\Z/2\Z\times\mathfrak S_3\]
(where $D_8$ is the dihedral group of order $8$ and $\mathfrak S_n$ is the symmetric group on $n$ letters).
\end{0}

In Sections \ref{p5}--\ref{p2}, we will prove the following result (along with \Cref{Thmgood}; see \ref{Summp5p2} below).

\begin{Prop}\label{Roots1}
(a) Let $\cO\subseteq\mf G$ be a unipotent class not excluded in \Cref{Thmgood}, and let $u_0\in\cO^F$ be a good element. Then for each $\varsigma\in\Irr(A_\mf G(u_0))$, the root of unity $\delta_{(u_0,\varsigma)}$, $\zeta_{\tau((u_0,\varsigma))}\gamma_{(u_0,\varsigma)}$ or $\xi_{(u_0,\varsigma)}$ (depending on whether $(u_0,\varsigma)$ is as in \ref{Green}, \ref{Intermed} or \ref{CuspCS}, respectively) is equal to $+1$.

(b) If $\cO\subseteq\mf G$ is a unipotent class such that there exists some $\varsigma\in\Irr(A_\mf G(u_\cO))$ for which the pair $(u_\cO,\varsigma)\in\cN_\mf G$ is as in either \ref{Intermed} or \ref{CuspCS}, then $\cO$ is not among the classes excluded in \Cref{Thmgood}, so (a) applies.
\end{Prop}

Since the ordinary Green functions (that is, the signs $\delta_\phi$ for $\phi\in\Irr(\mf W)$) are computed independently in \cite{BeSp} (for $p>5$) and \cite{Lueb} (for $p\leqslant5$), the proof of \Cref{Roots1} will complete the determination of the values of unipotent characters at unipotent elements for the groups $\dt E_8(q)$ where $q$ is any power of any prime $p$.

\section{The case \texorpdfstring{$p\geqslant5$}{pgeq5}}\label{p5}

In this section, we assume that $p\geqslant5$.~---~While it is true that the classification of unipotent classes is different in the respective cases $p\geqslant7$ and $p=5$, the only place where this distinction affects the description occurs for the regular unipotent class of $\mf G$. Anything else can be treated in exactly the same manner, so it makes sense to consider the cases $p\geqslant7$ and $p=5$ simultaneously.

\begin{0}{\bf The regular unipotent class $\dt E_8$.}
If $p\geqslant7$, we have $A_\mf G(u_{\dt E_8})=\{1\}$, so there is nothing to do (see the remark in \ref{Method}).

The case $p=5$ has been handled in \cite[4.5.8--4.5.11]{HDiss}: We have $A_\mf G(u_{\dt E_8})=\langle\overline u_{\dt E_8}\rangle\cong\Z/5\Z$, there exists a good element $u_0\in\dt E_8^F$ in the sense of \Cref{Defgood}, and the $\mf G^F$-conjugacy class of $u_0$ is the unique good one contained in $\dt E_8^F$ (so that \Cref{Thmgood} holds with respect to $\cO=\dt E_8$). The four pairs $(u_0,\varsigma)\in\cN_\mf G$ with $1\neq\varsigma\in\Irr(A_\mf G(u_0))$ are all cuspidal, and we have $\xi_{(u_0,\varsigma)}=+1$ for any $1\neq\varsigma\in\Irr(A_\mf G(u_0))$. The pair $(u_0,1)\in\cN_\mf G$ corresponds to the trivial character $1_\mf W$ of $\mf W$, so we have $\delta_{(u_0,1)}=\delta_{1_\mf W}=+1$ (see \ref{Green}).
\end{0}

From now on we no longer need to distinguish between $p\geqslant7$ or $p=5$, as both the classification of the unipotent classes other than $\dt E_8$, the description of the generalised Springer correspondence and all the numerical ingredients which occur in \ref{CuspCS}--\ref{Green} are entirely analogous with respect to any unipotent class $\cO\neq\dt E_8$. For these remaining classes, the possibilities for the isomorphism types of the group $A_\mf G(u_\cO)$ are $\{1\}$ (in which case there is nothing to do), $\Z/2\Z$ (which will be treated in \ref{p5Z2} below), $\mathfrak S_3$ (see \ref{p5S3} below) and $\mathfrak S_5$ (see \ref{p5S5} below).

\begin{0}{\bf The classes $\cO$ with $A_\mf G(u_\cO)\cong\Z/2\Z$.}\label{p5Z2}
Let $\cO\subseteq\mf G$ be a unipotent class for which $A_\mf G(u_\cO)\cong\Z/2\Z$. By the results of \cite{MizE7E8}, $\cO$ is then one of the following classes:
\begin{align*}
&\dt E_7+\dt A_1,\; \dt D_8,\; \dt E_7(a_1)+\dt A_1,\; \dt D_8(a_1),\; \dt D_7(a_1),\; \dt D_6+\dt A_1,\; \dt E_6(a_1)+\dt A_1,\; \dt D_7(a_2),\; \dt E_6(a_1), \\
&\dt D_5+\dt A_2,\; \dt D_6(a_1)+\dt A_1,\; \dt D_6(a_1),\; \dt D_6(a_2),\; \dt A_5+2\dt A_1,\; (\dt A_5+\dt A_1)'',\; \dt D_4+\dt A_2,\; \dt A_4+2\dt A_1, \\
&\dt D_5(a_1),\; \dt A_4+\dt A_1,\; \dt D_4(a_1)+\dt A_2,\; \dt A_4,\; \dt A_3+\dt A_2,\; 2\dt A_2,\; \dt A_2+\dt A_1,\; \dt A_2.
\end{align*}
Note that, as $A_\mf G(u)\cong\Z/2\Z$ for $u\in\cO^F$, the group homomorphism on $A_\mf G(u)$ induced by $F$ is necessarily the identity, so let us for now just fix any element $u_\cO\in\cO^F$ (see \ref{LuMap}). We need to consider the two pairs $(u_\cO,\pm1)\in\cN_\mf G$. By consulting \cite{Sp}, we see that both of these pairs correspond to the triple $\jj=(\mf T_0,\{1\},1)\in\cM_\mf G$ under the bijection \eqref{GenSpringBij} in \ref{GenSpring}, that is, they are in the image of the ordinary Springer correspondence $\Irr(\mf W)\hookrightarrow\cN_\mf G$. As already noted in \ref{Green}, we have $\delta_{(u_\cO,1)}=1$ (and $\delta_{(u_\cO,-1)}\in\{\pm1\}$). We now evaluate $R_x((u_\cO)_a)$ and $m((u_\cO)_a,w)$ as explained in \ref{Method}. As it turns out, this computation can essentially be divided into the following two cases with regard to the above unipotent classes $\cO$:

(a) $\cO\notin\{\dt E_7(a_1)+\dt A_1, \dt D_6(a_1)+\dt A_1\}$. For $a\in A_\mf G(u_\cO)$ and $w\looparrowright\cO$, we get
\[0\leqslant m((u_\cO)_a,w)=f_w(q)\cdot(\delta_{(u_\cO,1)}+\delta_{(u_\cO,-1)}\cdot\varepsilon(a))=f_w(q)\cdot(1+\delta_{(u_\cO,-1)}\cdot\varepsilon(a))\]
where $\varepsilon$ is the non-trivial linear character of $A_\mf G(u_\cO)$ and $f_w(q)$ is the evaluation at $q$ of a certain polynomial $f_w$ with integral coefficients; we have $f_w(q)>0$ for any $q$ (see \cite[1.2]{LuWeylUni} or \cite[\S 5]{GCH} for a conceptual explanation of this fact). It follows that $m((u_\cO)_a,w)=2f_w(q)$ for one value of $a\in A_\mf G(u_\cO)$ and $m((u_\cO)_a,w)=0$ for the other value of $a\in A_\mf G(u_\cO)$. Hence (by possibly changing the representative $u_\cO$), we see that there exists an element $u_0\in\cO^F$ such that $m(u_0,w)=2f_w(q)>0$ and $m((u_0)_{-1},w)=0$. Thus, $u_0$ is good in the sense of \Cref{Defgood}, and the $\mf G^F$-conjugacy class of $u_0$ is the unique good one contained in $\cO^F$, so \Cref{Thmgood} holds for $\cO$. Furthermore, with respect to this $u_0\in\cO^F$, we have $\delta_{(u_0,1)}=\delta_{(u_0,-1)}=+1$. 

(b) $\cO\in\{\dt E_7(a_1)+\dt A_1, \dt D_6(a_1)+\dt A_1\}$. For $a\in A_\mf G(u_\cO)$ and $w\looparrowright\cO$, we get
\[0\leqslant m((u_\cO)_a,w)=f_w(q)\cdot\delta_{(u_\cO,1)}\]
where $f_w(q)>0$.~---~So here we do not obtain any information on the sign $\delta_{(u_\cO,-1)}$ (while we already knew in advance that $\delta_{(u_\cO,1)}=+1$), and any element of $\cO^F$ is good according to \Cref{Defgood}, which is why we needed to exclude $\dt E_7(a_1)+\dt A_1$ and $\dt D_6(a_1)+\dt A_1$ in the formulation of \Cref{Thmgood}. If $p\geqslant7$, it is shown in \cite{BeSp} that we have $\delta_{(u_0,-1)}=+1$ if $u_0\in\cO^F$ is chosen to be a split unipotent element as defined in \Cref{Remsplit}. For $p=5$, see \cite{Lueb}.
\end{0}

\begin{0}{\bf The classes $\cO$ with $A_\mf G(u_\cO)\cong\mathfrak S_3$.}\label{p5S3}
Let $\cO\subseteq\mf G$ be a unipotent class for which $A_\mf G(u_\cO)\cong\mathfrak S_3$. Then $\cO$ is one of the following classes (see \cite{MizE7E8}):
\[\dt E_7(a_2)+\dt A_1,\; \dt A_8,\; \dt D_8(a_3),\; \dt A_5+\dt A_2,\; \dt D_4(a_1)+\dt A_1,\; \dt D_4(a_1).\]
We also see from the results of \cite{MizE7E8} that for each of these classes, there is a unique $u_\cO\in\cO^F$ up to $\mf G^F$-conjugacy such that $F$ induces the identity map on $A_\mf G(u_\cO)$ (see \ref{LuMap}). Let us fix such a $u_\cO\in\cO^F$ and set $u_0:=u_\cO$. Denoting by $1, \varepsilon, r$ the trivial, sign, reflection characters of $A_\mf G(u_0)$, respectively, we need to consider the three pairs $(u_0,1), (u_0,\varepsilon), (u_0,r)\in\cN_\mf G$. According to \cite{Sp}, each of these pairs is in the image of the ordinary Springer correspondence $\Irr(\mf W)\hookrightarrow\cN_\mf G$. Recall from \ref{Green} that we have $\delta_{(u_0,1)}=1$. We now evaluate $R_x((u_0)_a)$ and $m((u_0)_a,w)$ as explained in \ref{Method}.

(a) $\cO\neq\dt D_8(a_3)$. For $a\in A_\mf G(u_0)$ and $w\looparrowright\cO$, we get
\[0\leqslant m((u_0)_a,w)=f_w(q)\cdot(1+\delta_{(u_0,\varepsilon)}\cdot\varepsilon(a)+2\delta_{(u_0,r)}\cdot r(a))\]
where $f_w(q)>0$ is the evaluation at $q$ of a certain polynomial $f_w$ with integral coefficients (cf.\ \ref{p5Z2}). Taking $a=1$, we get
\[0\leqslant m(u_0,w)=f_w(q)\cdot(1+\delta_{(u_0,\varepsilon)}+4\delta_{(u_0,r)}),\]
which only holds when $\delta_{(u_0,r)}=+1$. Taking now $a\in A_\mf G(u_0)\cong\mathfrak S_3$ to be a $3$-cycle, we obtain
\[0\leqslant f_w(q)\cdot(1+\delta_{(u_0,\varepsilon)}-2)=f_w(q)\cdot(\delta_{(u_0,\varepsilon)}-1),\]
which forces $\delta_{(u_0,\varepsilon)}=+1$.  Evaluating $m((u_0)_a,w)$ for $a\in A_\mf G(u_0)$ (and with the known values $\delta_{(u_0,1)}=\delta_{(u_0,\varepsilon)}=\delta_{(u_0,r)}=+1$), we see that $u_0\in\cO^F$ is good in the sense of \Cref{Defgood} and that the $\mf G^F$-conjugacy class of $u_0$ is the unique good one contained in $\cO^F$, so \Cref{Thmgood} holds for $\cO$.

(b) $\cO=\dt D_8(a_3)$. For $a\in A_\mf G(u_0)$ and $w\looparrowright\cO$, we get
\[0\leqslant m((u_0)_a,w)=q^{28}\cdot(1+\delta_{(u_0,\varepsilon)}\cdot\varepsilon(a)).\]
So here our computation does not provide any information on the signs $\delta_{(u_0,\varepsilon)}$ and $\delta_{(u_0,r)}$. It is shown in \cite[\S 3, Case~5]{BeSp} that
\[\delta_{(u_0,r)}=+1\quad\text{and}\quad\delta_{(u_0,\varepsilon)}\equiv q\mod 3.\]
So we have $m(u_0,w)=0$ if $q\equiv-1\mod 3$, which means that there do not exist any good elements in this case; on the other hand, if $q\equiv1\mod 3$, we have $m(u_0,w)=2q^{28}>0$, so $u_0$ is good (and the $\mf G^F$-conjugacy class of $u_0$ is the unique good one contained in $\dt D_8(a_3)^F$). This explains the condition for the class $\dt D_8(a_3)$ in \Cref{Thmgood}(a).
\end{0}

\begin{0}{\bf The class $\cO=2\dt A_4$ with $A_\mf G(u_\cO)\cong\mathfrak S_5$.}\label{p5S5}
Let $\cO=2\dt A_4\subseteq\mf G$ be the (unique) unipotent class with $A_\mf G(u_\cO)\cong\mathfrak S_5$. By \cite[Lemma~70]{MizE7E8}, there exists a unique $u_0=u_\cO\in\cO^F$ up to $\mf G^F$-conjugacy such that $F$ induces the identity map on $A_\mf G(u_0)$. We need to consider $7$ pairs $(u_0,\varsigma)\in\cN_\mf G$ ($\varsigma\in\Irr(A_\mf G(u_0))$). Let us write $\varsigma_\lambda$ for the irreducible character of $A_\mf G(u_0)\cong\mathfrak S_5$ of the Specht module of $\mathfrak S_5$ associated to the partition $\lambda$ of $5$. The results of \cite{Sp} show that the pair $(u_0,\varsigma_{(11111)})$ is a cuspidal pair for $\mf G$ (and has in fact already been considered in \cite[Prop.~4.5.7]{HDiss}), while the remaining $6$ pairs $(u_0,\varsigma)$ are in the image of the ordinary Springer correspondence $\Irr(\mf W)\hookrightarrow\cN_\mf G$. Denoting by $x_0\in X(\mf W)$ the element such that $A_{x_0}$ is the cuspidal unipotent character sheaf corresponding to $(u_0,\varsigma_{(11111)})$, let $\xi=\xi_{x_0}\in\Qlbar$ be the root of unity satisfying $R_{x_0}=\xi\chi_{x_0}^0$ (see \ref{CuspCS}). As for the other $6$ pairs $(u_0,\varsigma)$, we have
\begin{align*}
\phi_{4480,16}\leftrightarrow(u_0,\varsigma_{(5)}),\quad\phi_{5670,18}\leftrightarrow(u_0,\varsigma_{(41)}),\quad\phi_{4536,18}\leftrightarrow(u_0,\varsigma_{(32)}), \\
\quad\phi_{1680,22}\leftrightarrow(u_0,\varsigma_{(311)}),\quad\phi_{1400,20}\leftrightarrow(u_0,\varsigma_{(221)}),\quad\phi_{70,32}\leftrightarrow(u_0,\varsigma_{(2111)}).
\end{align*}
Evaluating $R_x((u_0)_a)$ and $m((u_0)_a,w)$ for $a\in A_\mf G(u_0)$ and $w\looparrowright2\dt A_4$ as explained in \ref{Method}, we get
\begin{align*}
m((u_0)_a,w)=q^{40}\bigl(\delta_{\phi_{4480,16}}\varsigma_{(5)}(a)&+4\delta_{\phi_{5670,18}}\varsigma_{(41)}(a)+5\delta_{\phi_{4536,18}}\varsigma_{(32)}(a)+6\delta_{\phi_{1680,22}}\varsigma_{(311)}(a) \\
&+5\delta_{\phi_{1400,20}}\varsigma_{(221)}(a)+4\delta_{\phi_{70,32}}\varsigma_{(2111)}(a)+\xi\varsigma_{(11111)}(a)\bigr).
\end{align*}
\Cref{Tbl2A4p7} contains the values $m((u_0)_a,w)/q^{40}$ in dependence of the $6$ unknown signs $\delta_\phi$ (for $\phi$ as above) and the root of unity $\xi$, which we can now easily determine using the fact that $m((u_0)_a,w)/q^{40}\geqslant0$: First of all, since $m((u_0)_a,w)/q^{40}$ is in particular a real number, the root of unity $\xi$ is necessarily a sign. Assume, if possible, that $\delta_{\phi_{1680,22}}=-1$. Then the first row of \Cref{Tbl2A4p7} shows that we must have $\delta_{\phi_{5670,18}}=\delta_{\phi_{70,32}}=-1$, but this contradicts the non-negativity of the last row of the table. We deduce that $\delta_{\phi_{1680,22}}=+1$. The row labelled by $(2,2,1)$ in \Cref{Tbl2A4p7} then forces $\delta_{\phi_{4480,16}}=\delta_{\phi_{4536,18}}=\delta_{\phi_{1400,20}}=\xi=+1$. In turn, the row labelled by $(3,1,1)$ yields that $\delta_{\phi_{5670,18}}=\delta_{\phi_{70,32}}=+1$ as well. Computing the $m((u_0)_a,w)$ with these known signs, we see that $m((u_0)_a,w)=0$ whenever $1\neq a\in A_\mf G(u_0)$, while $m(u_0,w)=120q^{40}$. Thus, $u_0\in(2\dt A_4)^F$ is good, and the $\mf G^F$-class of $u_0$ is the unique good one contained in $(2\dt A_4)^F$, so \Cref{Thmgood} holds for the class $2\dt A_4$.
\begin{table}[htbp]
\caption{$m(u,w)/q^{40}$ for $u\in2\dt A_4$ and $w\looparrowright2\dt A_4$}
\label{Tbl2A4p7}
\centering
\begin{tabular}{ll}
\toprule
$a$ & $m((u_0)_a,w)/q^{40}$ \\
\midrule[0.08em]
$(5)$ & $\delta_{\phi_{4480,16}}-\phantom{1}4\delta_{\phi_{5670,18}}\phantom{+25\delta_{\phi_{4536,18}}}\;\,+\phantom{1}6\delta_{\phi_{1680,22}}\phantom{+25\delta_{\phi_{1400,20}}}\;\,
-\phantom{1}4\delta_{\phi_{70,32}}+\xi$ \\
$(4,1)$ & $\delta_{\phi_{4480,16}}\phantom{+16\delta_{\phi_{5670,18}}}\;\,-\phantom{2}5\delta_{\phi_{4536,18}}\phantom{+36\delta_{\phi_{1680,22}}}\;\,
+\phantom{2}5\delta_{\phi_{1400,20}}\phantom{+16\delta_{\phi_{70,32}}}\;\,-\xi$ \\
$(3,2)$ & $\delta_{\phi_{4480,16}}-\phantom{1}4\delta_{\phi_{5670,18}}+\phantom{2}5\delta_{\phi_{4536,18}}\phantom{+36\delta_{\phi_{1680,22}}}\;\,
-\phantom{2}5\delta_{\phi_{1400,20}}+\phantom{1}4\delta_{\phi_{70,32}}-\xi$ \\
$(3,1,1)$ & $\delta_{\phi_{4480,16}}+\phantom{1}4\delta_{\phi_{5670,18}}-\phantom{2}5\delta_{\phi_{4536,18}}\phantom{+36\delta_{\phi_{1680,22}}}\;\,
-\phantom{2}5\delta_{\phi_{1400,20}}+\phantom{1}4\delta_{\phi_{70,32}}+\xi$ \\
$(2,2,1)$ & $\delta_{\phi_{4480,16}}\phantom{+16\delta_{\phi_{5670,18}}}\;\,+\phantom{2}5\delta_{\phi_{4536,18}}-12\delta_{\phi_{1680,22}}
+\phantom{2}5\delta_{\phi_{1400,20}}\phantom{+16\delta_{\phi_{70,32}}}\;\,+\xi$ \\
$(2,1,1,1)$ & $\delta_{\phi_{4480,16}}+\phantom{1}8\delta_{\phi_{5670,18}}+\phantom{2}5\delta_{\phi_{4536,18}}\phantom{+36\delta_{\phi_{1680,22}}}\;\,
-\phantom{2}5\delta_{\phi_{1400,20}}-\phantom{1}8\delta_{\phi_{70,32}}-\xi$ \\
$(1,1,1,1,1)$ & $\delta_{\phi_{4480,16}}+16\delta_{\phi_{5670,18}}+25\delta_{\phi_{4536,18}}+36\delta_{\phi_{1680,22}}
+25\delta_{\phi_{1400,20}}+16\delta_{\phi_{70,32}}+\xi$ \\
\bottomrule
\end{tabular}
\end{table}
\end{0}

\begin{0}{\bf Proof of \Cref{Corsplitgood}.}\label{ProofCorsplitgood}
Let $p\geqslant7$. We have seen in \ref{p5Z2}(b) that both of the two $\mf G^F$-classes contained in $(\dt E_7(a_1)+\dt A_1)^F$, $(\dt D_6(a_1)+\dt A_1)^F$ (respectively) consist entirely of good elements. Furthermore, the class $\dt D_8(a_3)$ neither contains any split nor any good elements when $q\equiv-1\mod3$ (see \Cref{Remsplit} and \ref{p5S3}(b)). If $\cO$ is one of the classes $\dt D_8(a_3)$ when $q\equiv1\mod3$ or $2\dt A_4$, there exists a unique $u_0\in\cO^F$ up to $\mf G^F$-conjugacy for which $F$ acts trivially on $A_\mf G(u_0)$ and both the split and the good elements have this property, so \Cref{Corsplitgood} holds with respect to these $\cO$ as well.

Let $\cO$ be any unipotent conjugacy class of $\mf G$ that we did not yet consider. Our arguments in this section show that $\cO^F$ contains a unique $\mf G^F$-class consisting of good elements, and any pair $(u_\cO,\varsigma)\in\cN_\mf G$ with $\varsigma\in\Irr(A_\mf G(u_\cO))$ is in the image of the ordinary Springer correspondence $\Irr(\mf W)\hookrightarrow\cN_\mf G$; moreover, the $\mf G^F$-class of a good element $u_0\in\cO^F$ is characterised by the property that $\delta_{(u_0,\varsigma)}=+1$ for any $\varsigma\in\Irr(A_\mf G(u_0))$. As mentioned in \cite[p.~590]{BeSp}, split elements also have this property, so the good and the split $\mf G^F$-class inside $\cO^F$ must coincide. This establishes \Cref{Corsplitgood}.
\end{0}

\section{The case \texorpdfstring{$p=3$}{p=3}}\label{p3}

In this section, we assume that $p=3$. In view of the results of \cite{MizE7E8} (see also \cite{Sp}), the possibilities for the isomorphism types of the group $A_\mf G(u_\cO)$ are $\{1\}$ (in which case there is nothing to do), $\Z/2\Z$ (which will be treated in \ref{p3Z2} below), $\Z/3\Z$ (see \ref{p3Z3} below), $\Z/6\Z$ (see \ref{p3Z6} below), $\mathfrak S_3$ (see \ref{p3S3} below) and $\mathfrak S_5$ (see \ref{p3S5} below).

\begin{0}{\bf The classes $\cO$ with $A_\mf G(u_\cO)\cong\Z/2\Z$.}\label{p3Z2}
Let $\cO\subseteq\mf G$ be a unipotent class for which $A_\mf G(u_\cO)\cong\Z/2\Z$. By the results of \cite{MizE7E8}, $\cO$ is then one of the following classes:
\begin{align*}
&\dt D_8,\; \dt E_7(a_1)+\dt A_1,\; \dt D_8(a_1),\; \dt D_7(a_1),\; \dt D_6+\dt A_1,\; \dt D_8(a_3),\; \dt E_6(a_1)+\dt A_1,\; \dt D_7(a_2),\; \dt E_6(a_1), \\
&\dt D_5+\dt A_2,\; \dt D_6(a_1)+\dt A_1,\; \dt D_6(a_1),\; \dt D_6(a_2),\; \dt A_5+2\dt A_1,\; (\dt A_5+\dt A_1)'',\; \dt D_4+\dt A_2,\; \dt A_4+2\dt A_1, \\
&\dt D_5(a_1),\; \dt A_4+\dt A_1,\; \dt D_4(a_1)+\dt A_2,\; \dt A_4,\; \dt A_3+\dt A_2,\; 2\dt A_2,\; \dt A_2+\dt A_1,\; \dt A_2.
\end{align*}
The argumentation is entirely analogous to the one in \ref{p5Z2}: In particular, if $\cO$ is one of the above classes not equal to $\dt E_7(a_1)+\dt A_1$ or $\dt D_6(a_1)+\dt A_1$, we find a good element $u_0\in\cO^F$ which is uniquely determined up to $\mf G^F$-conjugacy, both of the two pairs $(u_0,\pm1)\in\cN_\mf G$ are in the image of the ordinary Springer correspondence $\Irr(\mf W)\hookrightarrow\cN_\mf G$, and we have $\delta_{(u_0,1)}=\delta_{(u_0,-1)}=+1$. (This includes the class $\cO=\dt D_8(a_3)$, which did not appear in \ref{p5Z2}, so we do not have to exclude it in the formulation of \Cref{Thmgood}.)

If, on the other hand, $\cO\in\{\dt E_7(a_1)+\dt A_1, \dt D_6(a_1)+\dt A_1\}$, we can (as in \ref{p5Z2}) only conclude that $\delta_{(u_\cO,1)}=+1$ (regardless of the choice of $u_\cO\in\cO^F$) and that any element of $\cO^F$ is good, while we do not obtain any information on the sign $\delta_{(u_\cO,-1)}$. We refer to \cite{Lueb}.
\end{0}

\begin{0}{\bf The classes $\cO$ with $A_\mf G(u_\cO)\cong\Z/3\Z$.}\label{p3Z3}
Let $\cO\subseteq\mf G$ be a unipotent class for which $A_\mf G(u_\cO)\cong\Z/3\Z$. Then $\cO$ is one of the following classes (see \cite{MizE7E8}):
\[\dt E_8,\; \dt E_8(a_1),\; \dt E_7,\; \dt E_6+\dt A_1,\; \dt E_6.\]
For any $u\in\cO^F$, $F$ induces the identity map on $A_\mf G(u)$, so this condition does not constitute any restriction on the choice of $u_\cO\in\cO^F$ (see \ref{LuMap}). However, since $C_\mf G(u_\cO)=C_\mf G(u_\cO^{-1})$ and since any one of the above $\cO$ is uniquely determined by $\dim C_\mf G(u_\cO)$ along with the isomorphism type of $A_\mf G(u_\cO)$, we must have $u_\cO^{-1}\in\cO$. As $\cO^F$ is the union of an odd number of $\mf G^F$-conjugacy classes, we conclude that there must be (at least) one $\mf G^F$-conjugacy class contained in $\cO^F$ which is stable under taking inverses; let $u_0\in\cO^F$ be a representative for such a class, that is, $u_0$ is $\mf G^F$-conjugate to $u_0^{-1}$. We fix a generator $a$ of $A_\mf G(u_0)$ (unless $\cO=\dt E_6+\dt A_1$, we could take $a=\overline u_0$; see \cite[Table~17.4]{LieSei}), and let $\omega\in\Qlbar$ be a fixed primitive $3$rd root of unity, so that we can identify the irreducible characters of $A_\mf G(u_0)$ with their values at $a$. We need to consider the three pairs $(u_0,1), (u_0,\omega), (u_0,\omega^2)\in\cN_\mf G$. While the pair $(u_0,1)$ is in the image of the ordinary Springer correspondence $\Irr(\mf W)\hookrightarrow\cN_\mf G$, the pairs $(u_0,\omega)$ and $(u_0,\omega^2)$ are \enquote{intermediate} in the sense of \ref{Intermed}: We have $\tau((u_0,\omega))=(\dt E_6,u_{\dt E_6},\omega)\in\cM_\mf G$ and $\tau((u_0,\omega^2))=(\dt E_6,u_{\dt E_6},\omega^2)\in\cM_\mf G$. Let $x_1, x_2\in X(\mf W)$ be such that $x_1\leftrightarrow(u_0,\omega)$, $x_2\leftrightarrow(u_0,\omega^2)$. An inspection of the Fourier matrix (cf.\ \ref{E8Setup}) shows that we have $\overline R_{x_1}=R_{x_2}$ and that both $R_{x_1}$ and $R_{x_2}$ are linear combinations of unipotent characters with rational coefficients. By our choice of $u_0$, we thus have
\[R_{x_1}(u_0)=R_{x_1}(u_0^{-1})=\overline{R_{x_1}(u_0)}=R_{x_2}(u_0).\]
The discussion in \ref{Intermed} then shows that we have $\zeta_{(\dt E_6,u_{\dt E_6},\omega)}\gamma_{(u_0,\omega)}=\zeta_{(\dt E_6,u_{\dt E_6},\omega^2)}\gamma_{(u_0,\omega^2)}\in\{\pm1\}$. Evaluating $m(u_0,w)$ for $w\looparrowright\cO$ as explained in \ref{Method} gives
\[0\leqslant m(u_0,w)=f_w(q)(1+2\zeta_{(\dt E_6,u_{\dt E_6},\omega)}\gamma_{(u_0,\omega)})\]
where $f_w(q)>0$ (cf.\ \ref{p5Z2}). This would be false if $\zeta_{(\dt E_6,u_{\dt E_6},\omega)}\gamma_{(u_0,\omega)}=-1$, so we deduce that $\zeta_{(\dt E_6,u_{\dt E_6},\omega)}\gamma_{(u_0,\omega)}=\zeta_{(\dt E_6,u_{\dt E_6},\omega^2)}\gamma_{(u_0,\omega^2)}=+1$. We thus have $m(u_0,w)=3f_w(q)>0$, which shows that $u_0$ is good. Evaluating $m((u_0)_b,w)$ for $1\neq b\in A_\mf G(u_0)$, we get $m((u_0)_b,w)=0$. Hence, the $\mf G^F$-class of $u_0$ is the unique good one contained in $\cO^F$, and \Cref{Thmgood} holds for $\cO$. As a side remark, note that the only assumption which we made on $u_0$ in the first place is that it is $\mf G^F$-conjugate to $u_0^{-1}$, and this was sufficient to deduce that the $\mf G^F$-class of $u_0$ is the unique good one contained in $\cO^F$. Hence, the $\mf G^F$-class of $u_0$ is in fact the only one contained in $\cO^F$ which is stable under taking inverses (i.e., it is the unique real conjugacy class of $\mf G^F$ contained in $\cO^F$).
\end{0}

\begin{0}{\bf The class $\cO=\dt E_7+\dt A_1$ with $A_\mf G(u_\cO)\cong\Z/6\Z$.}\label{p3Z6}
Let $\cO=\dt E_7+\dt A_1\subseteq\mf G$ be the (unique) unipotent class with $A_\mf G(u_\cO)\cong\Z/6\Z$. This has been treated in \cite[4.5.14--4.5.23]{HDiss}, so let us only summarise the result here; note that the argument relies upon the solution of the last open cases in the generalised Springer correspondence \cite{HSpring}. There exists a unique good element $u_0\in\cO^F$ up to $\mf G^F$-conjugacy (so \Cref{Thmgood} holds for $\dt E_7+\dt A_1$), the Frobenius map $F$ acts trivially on $A_\mf G(u_0)$, and we can choose a generator $a$ of $A_\mf G(u_0)$ such that $a^2=\overline u_0^{\,2}$. Let $\omega\in\Qlbar$ be a fixed primitive $3$rd root of unity, and let us denote the irreducible characters of $A_\mf G(u_0)$ by their values at $a$. The pairs $(u_0,\pm1)\in\cN_\mf G$ are in the image of the ordinary Springer correspondence $\Irr(\mf W)\hookrightarrow\cN_\mf G$; we have $\phi_{112,3}\leftrightarrow(u_0,1)$ and $\phi_{28,8}\leftrightarrow(u_0,-1)$. The pairs $(u_0,\omega)$ and $(u_0,\omega^2)$ are \enquote{intermediate} in the sense of \ref{Intermed}; we have $\tau((u_0,\omega))=(\dt E_6,u_{\dt E_6},\omega)\in\cM_\mf G$ and $\tau((u_0,\omega^2))=(\dt E_6,u_{\dt E_6},\omega^2)\in\cM_\mf G$. The pairs $(u_0,-\omega)$ and $(u_0,-\omega^2)$ are cuspidal. Evaluating $m((u_0)_{a^j},w)$ for $0\leqslant j\leqslant5$ and $w\looparrowright\dt E_7+\dt A_1$, one finds that
\[\delta_{\phi_{112,3}}=\delta_{\phi_{28,8}}=\zeta_{(\dt E_6,u_{\dt E_6},\omega)}\gamma_{(u_0,\omega)}=\zeta_{(\dt E_6,u_{\dt E_6},\omega^2)}\gamma_{(u_0,\omega^2)}=\xi_{(u_0,-\omega)}=\xi_{(u_0,-\omega^2)}=+1.\]
\end{0}

\begin{0}{\bf The classes $\cO$ with $A_\mf G(u_\cO)\cong\mathfrak S_3$.}\label{p3S3}
Let $\cO\subseteq\mf G$ be a unipotent class for which $A_\mf G(u_\cO)\cong\mathfrak S_3$. Then $\cO$ is one of the following classes (see \cite{MizE7E8}):
\[\dt E_7(a_2)+\dt A_1,\; \dt A_8,\; \dt A_5+\dt A_2,\; \dt D_4(a_1)+\dt A_1,\; \dt D_4(a_1).\]
The argumentation with respect to the above classes is entirely analogous to the one in \ref{p5S3}; but note that the class $\dt D_8(a_3)$ does not appear here (since $A_\mf G(u)\cong\Z/2\Z$ for $u\in\dt D_8(a_3)$): If $\cO$ is one of the above classes, there is a unique element $u_0\in\cO^F$ up to $\mf G^F$-conjugacy for which $F$ acts trivially on $A_\mf G(u_0)$ and $u_0$ is good, so \Cref{Thmgood} holds for $\cO$. All of the $3$ pairs $(u_0,1), (u_0,\varepsilon), (u_0,r)$ are in the image of the ordinary Springer correspondence $\Irr(\mf W)\hookrightarrow\cN_\mf G$, and we have $\delta_{(u_0,1)}=\delta_{(u_0,\varepsilon)}=\delta_{(u_0,r)}=+1$.
\end{0}

\begin{0}{\bf The class $\cO=2\dt A_4$ with $A_\mf G(u_\cO)\cong\mathfrak S_5$.}\label{p3S5}
Let $\cO=2\dt A_4\subseteq\mf G$ be the (unique) unipotent class with $A_\mf G(u_\cO)\cong\mathfrak S_5$. The argument in this case can be copied verbatim from \ref{p5S5}. In particular, there exists a unique $u_0=u_\cO\in\cO^F$ up to $\mf G^F$-conjugacy such that $F$ induces the identity map on $A_\mf G(u_0)$, the $\mf G^F$-class of $u_0$ is the unique good one contained in $(2\dt A_4)^F$ (so \Cref{Thmgood} holds for the class $2\dt A_4$), and with the notation analogous to the one in \ref{p5S5}, we have
\[\delta_{\phi_{1680,22}}=\delta_{\phi_{4480,16}}=\delta_{\phi_{4536,18}}=\delta_{\phi_{1400,20}}=\delta_{\phi_{5670,18}}=\delta_{\phi_{70,32}}=\xi=+1.\]
\end{0}

\section{The case \texorpdfstring{$p=2$}{p=2}}\label{p2}

In this section, we assume that $p=2$. In view of the results of \cite{MizE7E8} (see also \cite{Sp}), the possibilities for the isomorphism types of the group $A_\mf G(u_\cO)$ are $\{1\}$ (in which case there is nothing to do), $\Z/2\Z$ (which will be treated in \ref{p2Z2} below), $\Z/4\Z$ (see \ref{p2Z4} below), $\Z/2\Z\times\Z/2\Z$ (see \ref{p2Z2Z2} below), $D_8$ (see \ref{p2D8} below), $\mathfrak S_3$ (see \ref{p2S3} below) $\mathfrak S_3\times\Z/2\Z$ (see \ref{p2S3Z2} below), and $\mathfrak S_5$ (see \ref{p2S5} below).

\begin{0}{\bf The classes $\cO$ with $A_\mf G(u_\cO)\cong\Z/2\Z$.}\label{p2Z2}
Let $\cO\subseteq\mf G$ be a unipotent class for which $A_\mf G(u_\cO)\cong\Z/2\Z$. By the results of \cite{MizE7E8}, $\cO$ is then one of the following classes:
\begin{align*}
&\dt E_8(a_2),\; \dt D_8,\; \dt E_7(a_1)+\dt A_1,\; \dt E_7(a_1),\; \dt D_7,\; \dt E_7(a_2),\; \dt E_6+\dt A_1,\;(\dt D_7(a_1))_2,\; \dt D_6+\dt A_1,\; \dt E_6(a_1)+\dt A_1, \\
& \dt E_6,\;\dt D_7(a_2),\; \dt D_6,\; \dt E_6(a_1), \; (\dt D_5+\dt A_2)_2,\; \dt A_6,\; \dt D_5+\dt A_1,\;\dt D_6(a_2),\; \dt A_5+2\dt A_1,\;\dt D_5,\; (\dt A_5+\dt A_1)'',
\\
&(\dt D_4+\dt A_2)_2,\; \dt A_4+2\dt A_1,\; \dt D_5(a_1),\; \dt A_4+\dt A_1,\; \dt D_4+\dt A_1,\; \dt D_4(a_1)+\dt A_2,\; \dt A_4,\; \dt D_4,\; 2\dt A_2,\; \dt A_2+\dt A_1,\; \dt A_2.
\end{align*}
Since $A_\mf G(u)\cong\Z/2\Z$ for $u\in\cO^F$, the group homomorphism on $A_\mf G(u)$ induced by $F$ is necessarily the identity, so let us for now just fix any element $u_\cO\in\cO^F$ (see \ref{LuMap}). We need to consider the two pairs $(u_\cO,\pm1)\in\cN_\mf G$. While $(u_\cO,1)$ is always in the image of the ordinary Springer correspondence $\Irr(\mf W)\hookrightarrow\cN_\mf G$, the element $\jj=\tau((u_\cO,-1))\in\cM_\mf G$ depends on $\cO$: We either have $\jj=(\mf T_0,\{1\},1)$ (so that $(u_\cO,-1)$ is in the image of the ordinary Springer correspondence), or else $\jj=(\dt D_4,\cO_0,\cE_0)$. So let us split our consideration according to these two cases.

(a) Let us assume that $(u_\cO,-1)$ is in the image of the ordinary Springer correspondence; then $\cO$ is one of the following classes:
\begin{align*}
&\dt D_8,\;  \dt D_6+\dt A_1,\; \dt E_6(a_1)+\dt A_1,\; \dt D_7(a_2),\; \dt E_6(a_1), \; \dt A_6,\;\dt D_6(a_2),\; \dt A_5+2\dt A_1,\; (\dt A_5+\dt A_1)'',
\\
&\dt A_4+2\dt A_1,\; \dt D_5(a_1),\; \dt A_4+\dt A_1,\; \dt D_4(a_1)+\dt A_2,\; \dt A_4,\; 2\dt A_2,\; \dt A_2+\dt A_1,\; \dt A_2.
\end{align*}
The argumentation is entirely analogous to the one in \ref{p5Z2}, but note that the classes $\dt E_7(a_1)+\dt A_1$ and $\dt D_6(a_1)+\dt A_1$ do not appear in the above list. So there always is a good element $u_0\in\cO^F$, which is uniquely determined up to $\mf G^F$-conjugacy (so \Cref{Thmgood} holds for $\cO$), and we have $\delta_{(u_0,1)}=\delta_{(u_0,-1)}=+1$.

(b) Let us assume that $(u_\cO,-1)$ is not in the image of the ordinary Springer correspondence, so that $\tau((u_\cO,-1))=(\dt D_4,\cO_0,\cE_0)$. Then $\cO$ is one of the following classes:
\begin{align*}
&\dt E_8(a_2),\; \dt E_7(a_1)+\dt A_1,\; \dt E_7(a_1),\; \dt D_7,\; \dt E_7(a_2),\; \dt E_6+\dt A_1,\; \dt E_6+\dt A_1,\;(\dt D_7(a_1))_2, \\
& \dt E_6,\; \dt D_6,\; (\dt D_5+\dt A_2)_2,\; \dt D_5+\dt A_1,\; \dt D_5,\;(\dt D_4+\dt A_2)_2,\; \dt D_4+\dt A_1,\; \dt D_4.
\end{align*}
Evaluating $R_x((u_\cO)_a)$ and $m((u_\cO)_a,w)$ for $a\in A_\mf G(u_\cO)$ and $w\looparrowright\cO$ gives
\[0\leqslant m((u_\cO)_a,w)=f_w(q)\cdot\bigl(1+\zeta_{(\dt D_4,\cO_0,\cE_0)}\gamma_{(u_\cO,-1)}\cdot\varepsilon(a)\bigr)\]
where $f_w(q)>0$ (cf.\ \ref{p5Z2}) and where $\varepsilon$ is the non-trivial linear character of $A_\mf G(u_\cO)\cong\Z/2\Z$. Since $m((u_\cO)_a,w)\in\R$ and $\zeta_{(\dt D_4,\cO_0,\cE_0)}\gamma_{(u_\cO,-1)}$ is a root of unity, we must have $\zeta_{(\dt D_4,\cO_0,\cE_0)}\gamma_{(u_\cO,-1)}\in\{\pm1\}$. Hence, as in \ref{p5Z2}, we conclude that we can choose $u_\cO=u_0\in\cO^F$ such that $m(u_0,w)=2f_w(q)>0$ and $m((u_0)_{-1},w)=0$. This shows that $u_0$ is good and that the $\mf G^F$-conjugacy class of $u_0$ is the unique good one contained in $\cO^F$, so \Cref{Thmgood} holds for $\cO$. Furthermore, with respect to this $u_0\in\cO^F$, we have $\zeta_{(\dt D_4,\cO_0,\cE_0)}\gamma_{(u_0,-1)}=+1$.
\end{0}

\begin{0}{\bf The classes $\cO$ with $A_\mf G(u_\cO)\cong\Z/4\Z$.}\label{p2Z4}
Let $\cO\subseteq\mf G$ be a unipotent class for which $A_\mf G(u_\cO)\cong\Z/4\Z$. Then $\cO$ is one of the following classes (see \cite{MizE7E8}):
\[\dt E_8,\; \dt E_8(a_1),\; \dt E_7.\]
We have $A_\mf G(u)=\langle\overline u\rangle$ for any $u\in\cO^F$, so $F$ acts trivially on $A_\mf G(u)$. Let us for now pick any $u_\cO\in\cO^F$ (see \ref{LuMap}). Denoting by $\I\in\Qlbar$ a fixed primitive $4$th root of unity, we identify the irreducible characters of $A_\mf G(u_\cO)$ with their values at $\overline u_\cO$.

(a) Let us assume that $\cO\in\{\dt E_8,\dt E_7\}$. The pair $(u_\cO,-1)$ corresponds to the triple $(\dt D_4,\cO_0,\cE_0)\in\cM_\mf G$, while $\tau((u_\cO,\I))=(\dt E_7,u_{\dt E_7},\I)$ and $\tau((u_\cO,-\I))=(\dt E_7,u_{\dt E_7},-\I)$. The values of $m\bigl((u_\cO)_{\overline u_\cO^{\,j}},w\bigr)/f_w(q)$ for $0\leqslant j\leqslant3$ and $w\looparrowright\cO$ are given in \Cref{Tblp2Z4} where, as before, $f_w(q)>0$ is the evaluation at $q$ of a certain polynomial attached to $w\looparrowright\cO$ (cf.\ \ref{p5Z2}).
\begin{table}[htbp]
\caption{$m(u,w)/f_w(q)$ for $u\in\cO$ and $w\looparrowright\cO$}
\label{Tblp2Z4}
\centering
\begin{tabular}{ll}
\toprule
$a$ & $m((u_\cO)_a,w)/f_w(q)$ \\
\midrule[0.08em]
$1$ & $1+\zeta_{(\dt D_4,\cO_0,\cE_0)}\gamma_{(u_\cO,-1)}+\phantom{\I}\zeta_{(\dt E_7,u_{\dt E_7},\I)}\gamma_{(u_\cO,\I)}+\phantom{\I}\zeta_{(\dt E_7,u_{\dt E_7},-\I)}\gamma_{(u_\cO,-\I)}$ \\
$\overline u_\cO$ & $1-\zeta_{(\dt D_4,\cO_0,\cE_0)}\gamma_{(u_\cO,-1)}+\I\zeta_{(\dt E_7,u_{\dt E_7},\I)}\gamma_{(u_\cO,\I)}-\I\zeta_{(\dt E_7,u_{\dt E_7},-\I)}\gamma_{(u_\cO,-\I)}$ \\
$\overline u_\cO^{\,2}$ & $1+\zeta_{(\dt D_4,\cO_0,\cE_0)}\gamma_{(u_\cO,-1)}-\phantom{\I}\zeta_{(\dt E_7,u_{\dt E_7},\I)}\gamma_{(u_\cO,\I)}-\phantom{\I}\zeta_{(\dt E_7,u_{\dt E_7},-\I)}\gamma_{(u_\cO,-\I)}$ \\
$\overline u_\cO^{\,3}$ & $1-\zeta_{(\dt D_4,\cO_0,\cE_0)}\gamma_{(u_\cO,-1)}-\I\zeta_{(\dt E_7,u_{\dt E_7},\I)}\gamma_{(u_\cO,\I)}+\I\zeta_{(\dt E_7,u_{\dt E_7},-\I)}\gamma_{(u_\cO,-\I)}$ \\
\bottomrule
\end{tabular}
\end{table}
Since $m((u_\cO)_a,w)/f_w(q)\geqslant0$ for $a\in A_\mf G(u_\cO)$ and since the sum of all values in \Cref{Tblp2Z4} is equal to $4>0$, there must be at least one entry which is strictly positive. Changing the representative $u_\cO$, if necessary, we may assume that $u_0=u_\cO\in\cO^F$ is such that the value in the first line of the table is strictly positive. Thus, we have $m(u_0,w)>0$ for $w\looparrowright\cO$, so $u_0$ is good. With this choice of $u_0=u_\cO$, adding the first and the third row of \Cref{Tblp2Z4} gives $2+2\zeta_{(\dt D_4,\cO_0,\cE_0)}\gamma_{(u_0,-1)}\in\R_{>0}$; since $\zeta_{(\dt D_4,\cO_0,\cE_0)}\gamma_{(u_0,-1)}$ is a root of unity, this forces $\zeta_{(\dt D_4,\cO_0,\cE_0)}\gamma_{(u_0,-1)}=+1$. Now both of the values in the second and fourth row of \Cref{Tblp2Z4} are non-negative and their sum is $2-2\zeta_{(\dt D_4,\cO_0,\cE_0)}\gamma_{(u_0,-1)}=0$, so both of the values in the second and fourth row are in fact equal to $0$, that is, we have $\zeta_{(\dt E_7,u_{\dt E_7},\I)}\gamma_{(u_0,\I)}=\zeta_{(\dt E_7,u_{\dt E_7},-\I)}\gamma_{(u_0,-\I)}$. From the first row of the table, we then get $2+2\zeta_{(\dt E_7,u_{\dt E_7},\I)}\gamma_{(u_0,\I)}\in\R_{>0}$, so we must have $\zeta_{(\dt E_7,u_{\dt E_7},\I)}\gamma_{(u_0,\I)}=\zeta_{(\dt E_7,u_{\dt E_7},-\I)}\gamma_{(u_0,-\I)}=+1$. We have shown that the entry in the first row of \Cref{Tblp2Z4} is the unique one which is non-zero (it is equal to $4$), which proves that the $\mf G^F$-conjugacy class of $u_0\in\cO^F$ is the unique good one contained in $\cO^F$, so \Cref{Thmgood} holds for $\cO$. Furthermore, with respect to this choice of $u_0$, we have
\[\delta_{(u_0,1)}=\zeta_{(\dt D_4,\cO_0,\cE_0)}\gamma_{(u_0,-1)}=\zeta_{(\dt E_7,u_{\dt E_7},\I)}\gamma_{(u_0,\I)}=\zeta_{(\dt E_7,u_{\dt E_7},-\I)}\gamma_{(u_0,-\I)}=+1.\]

(b) Let us now assume that $\cO=\dt E_8(a_1)$. This has been considered in \cite[4.5.24--4.5.31]{HDiss}, but we can also use a similar argument as above: While the pair $(u_\cO,-1)$ still corresponds to the triple $(\dt D_4,\cO_0,\cE_0)\in\cM_\mf G$, the two pairs $(u_\cO,\pm\I)$ are now cuspidal. Evaluating $m\bigl((u_\cO)_{\overline u_\cO^{\,j}},w\bigr)$ for $0\leqslant j\leqslant3$ and $w\looparrowright\dt E_8(a_1)$, one can then argue as in (a) to deduce that there exists a good element $u_0\in\dt E_8(a_1)^F$ which is uniquely determined up to $\mf G^F$-conjugacy (so \Cref{Thmgood} holds for $\dt E_8(a_1)$), and with respect to this choice of $u_0$, we have
\[\delta_{(u_0,1)}=\zeta_{(\dt D_4,\cO_0,\cE_0)}\gamma_{(u_0,-1)}=\xi_{(u_0,\I)}=\xi_{(u_0,-\I)}=+1.\]
\end{0}

\begin{0}{\bf The classes $\cO$ with $A_\mf G(u_\cO)\cong\Z/2\Z\times\Z/2\Z$.}\label{p2Z2Z2}
Let $\cO\subseteq\mf G$ be a unipotent class for which $A_\mf G(u_\cO)\cong\Z/2\Z\times\Z/2\Z$. By the results of \cite{MizE7E8}, $\cO$ is then one of the following classes:
\begin{align*}
\dt E_7+\dt A_1,\;\dt D_6(a_1).
\end{align*}
For any $u\in\cO^F$, $F$ acts trivially on $A_\mf G(u)$, so let us for now just fix any $u_\cO\in\cO^F$ (see \ref{LuMap}). Denoting by $ 1$ the trivial and by $\varepsilon$ the non-trivial linear character of $\Z/2\Z$, we have
\[\Irr(A_\mf G(u_\cO))=\{1\boxtimes1,\varepsilon\boxtimes1,1\boxtimes\varepsilon,\varepsilon\boxtimes\varepsilon\}.\]
The pairs $(u_\cO,1\boxtimes1), (u_\cO,\varepsilon\boxtimes\varepsilon)\in\cN_\mf G$ are in the image of the ordinary Springer correspondence, while $\tau((u_\cO,1\boxtimes\varepsilon))=\tau((u_\cO,\varepsilon\boxtimes1))=(\dt D_4,\cO_0,\cE_0)\in\cM_\mf G$. The values of $m((u_\cO)_a,w)/f_w(q)$ for $a\in A_\mf G(u_\cO)$ and $w\looparrowright\cO$ (and with $f_w(q)>0$ as before, cf.\ \ref{p5Z2}) are given in \Cref{TblZ2Z2}.
\begin{table}[htbp]
\caption{$m(u,w)/f_w(q)$ for $u\in\cO$ and $w\looparrowright\cO$}
\label{TblZ2Z2}
\centering
\begin{tabular}{ll}
\toprule
$a$ & $m((u_\cO)_a,w)/f_w(q)$ \\
\midrule[0.08em]
$(0,0)$ & $1+\delta_{(u_\cO,\varepsilon\boxtimes\varepsilon)}+\zeta_{(\dt D_4,\cO_0,\cE_0)}\gamma_{(u_\cO,1\boxtimes\varepsilon)}+\zeta_{(\dt D_4,\cO_0,\cE_0)}\gamma_{(u_\cO,\varepsilon\boxtimes1)}$ \\
$(0,1)$ & $1-\delta_{(u_\cO,\varepsilon\boxtimes\varepsilon)}-\zeta_{(\dt D_4,\cO_0,\cE_0)}\gamma_{(u_\cO,1\boxtimes\varepsilon)}+\zeta_{(\dt D_4,\cO_0,\cE_0)}\gamma_{(u_\cO,\varepsilon\boxtimes1)}$ \\
$(1,0)$ & $1-\delta_{(u_\cO,\varepsilon\boxtimes\varepsilon)}+\zeta_{(\dt D_4,\cO_0,\cE_0)}\gamma_{(u_\cO,1\boxtimes\varepsilon)}-\zeta_{(\dt D_4,\cO_0,\cE_0)}\gamma_{(u_\cO,\varepsilon\boxtimes1)}$ \\
$(1,1)$ & $1+\delta_{(u_\cO,\varepsilon\boxtimes\varepsilon)}-\zeta_{(\dt D_4,\cO_0,\cE_0)}\gamma_{(u_\cO,1\boxtimes\varepsilon)}-\zeta_{(\dt D_4,\cO_0,\cE_0)}\gamma_{(u_\cO,\varepsilon\boxtimes1)}$ \\
\bottomrule
\end{tabular}
\end{table}
Since $m((u_\cO)_a,w)/f_w(q)\geqslant0$ for $a\in A_\mf G(u_\cO)$ and since the sum of all values in \Cref{TblZ2Z2} is equal to $4>0$, there must be at least one entry in the table which is strictly positive. By changing the representative $u_\cO$, if necessary, we may assume without loss of generality that the first row is strictly positive and write $u_0=u_\cO$ for the corresponding representative. Thus, we have $m(u_0,w)>0$, so $u_0\in\cO^F$ is good. Taking the sum of the first and the last row of the table, we obtain $2+2\delta_{(u_\cO,\varepsilon\boxtimes\varepsilon)}>0$, which forces $\delta_{(u_\cO,\varepsilon\boxtimes\varepsilon)}=+1$. Adding the first and the third row of the table gives $2+2\zeta_{(\dt D_4,\cO_0,\cE_0)}\gamma_{(u_\cO,1\boxtimes\varepsilon)}\in\R_{>0}$, which forces the root of unity $\zeta_{(\dt D_4,\cO_0,\cE_0)}\gamma_{(u_\cO,1\boxtimes\varepsilon)}$ to be $+1$. Similarly, taking the sum of the first and the second row of the table, we deduce that $\zeta_{(\dt D_4,\cO_0,\cE_0)}\gamma_{(u_\cO,\varepsilon\boxtimes1)}=+1$. Evaluating $m((u_0)_a,w)$ (for $a\in A_\mf G(u_0)$ and $w\looparrowright\cO$) with these known signs, we see that the $\mf G^F$-conjugacy class of $u_0$ is the unique good one contained in $\cO^F$, so \Cref{Thmgood} holds for $\dt E_7+\dt A_1$ and $\dt D_6(a_1)$.
\end{0}

\begin{0}{\bf The class $\cO=\dt D_8(a_1)$ with $A_\mf G(u_\cO)\cong D_8$.}\label{p2D8}
Let $\cO=\dt D_8(a_1)\subseteq\mf G$ be the (unique) unipotent class with $A_\mf G(u_\cO)\cong D_8$. By \cite[Lm.~44]{MizE7E8}, there exist two $\mf G^F$-classes of elements $u\in\cO^F$ for which $F$ acts trivially on $A_\mf G(u)$. Let us for now denote by $u_\cO$ a representative of one of them (see \ref{LuMap}). The group $A_\mf G(u_\cO)\cong D_8$ is generated by two elements $s,t$ of order $2$ such that $st$ is of order $4$, and the conjugacy classes of $A_\mf G(u_\cO)$ are
\[\{1\},\;\{s,tst\},\;\{t,sts\},\;\{st,ts\},\;\{stst\}.\]
Let $1$ be the trivial character of $A_\mf G(u_\cO)$, $r$ the (irreducible) character of the reflection representation of $A_\mf G(u_\cO)$ (so that $r(1)=2$, $r(s)=r(t)=r(st)=0$ and $r(stst)=-2$), and let $\varepsilon_s, \varepsilon_t, \varepsilon$ be the linear characters of $A_\mf G(u_\cO)$ satisfying
\[\varepsilon_s(t)=\varepsilon_t(s)=+1,\quad\varepsilon_s(s)=\varepsilon_t(t)=\varepsilon(s)=\varepsilon(t)=-1.\]
The pairs $(u_\cO,1), (u_\cO,\varepsilon_s), (u_\cO,\varepsilon_t)\in\cN_\mf G$ are in the image of the ordinary Springer correspondence, the pair $(u_\cO,r)$ corresponds to the triple $(\dt D_4,\cO_0,\cE_0)\in\cM_\mf G$, and the pair $(u_\cO,\varepsilon)$ is cuspidal. The values $m((u_\cO)_a,w)$ for $a\in A_\mf G(u_\cO)$ and $w\looparrowright\cO$ are given in \Cref{TblD8}.
\begin{table}[htbp]
\caption{$m(u,w)/q^{20}$ for $u\in\cO$ and $w\looparrowright\cO$}
\label{TblD8}
\centering
\begin{tabular}{ll}
\toprule
$a$ & $m((u_\cO)_a,w)/q^{20}$ \\
\midrule[0.08em]
$1$ & $1+\delta_{(u_\cO,\varepsilon_s)}+\delta_{(u_\cO,\varepsilon_t)}+4\zeta_{(\dt D_4,\cO_0,\cE_0)}\gamma_{(u_\cO,r)}+\xi_{(u_\cO,\varepsilon)}$ \\
$s$ & $1-\delta_{(u_\cO,\varepsilon_s)}+\delta_{(u_\cO,\varepsilon_t)}\phantom{+4\zeta_{(\dt D_4,\cO_0,\cE_0)}\gamma_{(u_\cO,r)}}\;\,-\xi_{(u_\cO,\varepsilon)}$ \\
$t$ & $1+\delta_{(u_\cO,\varepsilon_s)}-\delta_{(u_\cO,\varepsilon_t)}\phantom{+4\zeta_{(\dt D_4,\cO_0,\cE_0)}\gamma_{(u_\cO,r)}}\;\,-\xi_{(u_\cO,\varepsilon)}$ \\
$st$ & $1-\delta_{(u_\cO,\varepsilon_s)}-\delta_{(u_\cO,\varepsilon_t)}\phantom{+4\zeta_{(\dt D_4,\cO_0,\cE_0)}\gamma_{(u_\cO,r)}}\;\,+\xi_{(u_\cO,\varepsilon)}$ \\
$stst$ & $1+\delta_{(u_\cO,\varepsilon_s)}+\delta_{(u_\cO,\varepsilon_t)}-4\zeta_{(\dt D_4,\cO_0,\cE_0)}\gamma_{(u_\cO,r)}+\xi_{(u_\cO,\varepsilon)}$ \\
\bottomrule
\end{tabular}
\end{table}
Since each value in \Cref{TblD8} is a non-negative number, the roots of unity $\xi_{(u_\cO,\varepsilon)}$ and $\zeta_{(\dt D_4,\cO_0,\cE_0)}\gamma_{(u_\cO,r)}$ are in particular real numbers, so they must be $+1$ or $-1$. Hence, either the entry in the first or in the last row of the table is equal to $1+\delta_{(u_\cO,\varepsilon_s)}+\delta_{(u_\cO,\varepsilon_t)}-4+\xi_{(u_\cO,\varepsilon)}\geqslant0$, so we must have $\delta_{(u_\cO,\varepsilon_s)}=\delta_{(u_\cO,\varepsilon_t)}=\xi_{(u_\cO,\varepsilon)}=+1$.

We then see that exactly one of the rows of the table (either the first or the last) is strictly positive, that is, there exists a unique $u_0\in\cO^F$ up to $\mf G^F$-conjugacy which satisfies condition (b) in \Cref{Defgood}: If $\zeta_{(\dt D_4,\cO_0,\cE_0)}\gamma_{(u_\cO,r)}=+1$, we set $u_0:=u_\cO$; if $\zeta_{(\dt D_4,\cO_0,\cE_0)}\gamma_{(u_\cO,r)}=-1$, we put $u_0:=(u_\cO)_{stst}\in\cO^F$. The fact that $F$ still acts trivially on $A_\mf G(u_0)$ in the latter case can be deduced by computing the centraliser order $|C_{\mf G^F}(u_0)|$ as explained in \Cref{Criteriongood} (and then consulting \cite[Lm.~44]{MizE7E8}). So we have shown that there exists a unique good $\mf G^F$-conjugacy class in $\cO^F$ (so \Cref{Thmgood} holds for $\cO$) and for an element $u_0$ of this class, we have
\[\delta_{(u_0,1)}=\delta_{(u_0,\varepsilon_s)}=\delta_{(u_0,\varepsilon_t)}=\xi_{(u_0,\varepsilon)}=\zeta_{(\dt D_4,\cO_0,\cE_0)}\gamma_{(u_0,r)}=+1.\]
\end{0}

\begin{0}{\bf The classes $\cO$ with $A_\mf G(u_\cO)\cong\mathfrak S_3$.}\label{p2S3}
Let $\cO\subseteq\mf G$ be a unipotent class for which $A_\mf G(u_\cO)\cong\mathfrak S_3$. Then $\cO$ is one of the following classes (see \cite{MizE7E8}):
\[\dt A_8,\; \dt D_8(a_3),\; \dt A_5+\dt A_2,\; \dt D_4(a_1)+\dt A_1,\; \dt D_4(a_1).\]
The argumentation with respect to the above classes is entirely analogous to the one in \ref{p5S3}: If $\cO$ is one of the above classes, there is a unique element $u_0\in\cO^F$ up to $\mf G^F$-conjugacy for which $F$ acts trivially on $A_\mf G(u_0)$, and the three pairs $(u_0,1), (u_0,\varepsilon), (u_0,r)\in\cN_\mf G$ are all in the image of the ordinary Springer correspondence.

(a) If $\cO\neq\dt D_8(a_3)$, we get
\[\delta_{(u_0,1)}=\delta_{(u_0,\varepsilon)}=\delta_{(u_0,r)}=+1,\]
$u_0$ is good, and the $\mf G^F$-conjugacy of $u_0$ is the unique good one contained in $\cO^F$, so \Cref{Thmgood} holds for $\cO$.

(b) If $\cO=\dt D_8(a_3)$, the evaluation of $m((u_0)_a,w)$ for $a\in A_\mf G(u_0)$ and $w\looparrowright\dt D_8(a_3)$ gives
\[0\leqslant m((u_0)_a,w)=q^{28}\cdot(1+\delta_{(u_0,\varepsilon)}\cdot\varepsilon(a)),\]
so as in \ref{p5S3}, we do not obtain any information on the signs $\delta_{(u_0,\varepsilon)}$ and $\delta_{(u_0,r)}$. It is shown in \cite[\S 9]{GcompGreen} that we have $\delta_{(u_0,\varepsilon)}\equiv q\mod 3$. See \cite{Lueb} for the sign $\delta_{(u_0,r)}$. So we have $m(u_0,w)=0$ if $q\equiv-1\mod 3$, which means that there do not exist any good elements in this case; on the other hand, if $q\equiv1\mod 3$, we have $m(u_0,w)=2q^{28}>0$, so $u_0$ is good (and the $\mf G^F$-conjugacy class of $u_0$ is the unique good one contained in $\dt D_8(a_3)^F$). This explains the condition for the class $\dt D_8(a_3)$ in \Cref{Thmgood}(c).
\end{0}

\begin{0}{\bf The class $\cO=\dt E_7(a_2)+\dt A_1$ with $A_\mf G(u_\cO)\cong\mathfrak S_3\times\Z/2\Z$.}\label{p2S3Z2}
Let $\cO=\dt E_7(a_2)+\dt A_1\subseteq\mf G$ be the (unique) unipotent class for which $A_\mf G(u_\cO)\cong\mathfrak S_3\times\Z/2\Z$. By \cite[Lm.~46]{MizE7E8}, there exist two $\mf G^F$-classes of elements $u\in\cO^F$ for which $F$ acts trivially on $A_\mf G(u)$. Let us for now denote by $u_\cO$ a representative of one of them (see \ref{LuMap}). As usual, we denote the irreducible characters of $\mathfrak S_3$ by $1,\varepsilon,r$ (unit, sign, reflection) and the non-trivial linear character of $\Z/2\Z$ by $\varepsilon$, so that
\[\Irr(A_\mf G(u_\cO))=\{1\boxtimes1,1\boxtimes\varepsilon,\varepsilon\boxtimes1,\varepsilon\boxtimes\varepsilon,r\boxtimes1,r\boxtimes\varepsilon\}.\]
The pairs $(u_\cO,1\boxtimes1), (u_\cO,\varepsilon\boxtimes1), (u_\cO,r\boxtimes1)\in\cN_\mf G$ are in the image of the ordinary Springer correspondence, the pairs $(u_\cO,1\boxtimes\varepsilon)$ and $(u_\cO,r\boxtimes\varepsilon)$ correspond to the triple $(\dt D_4,\cO_0,\cE_0)\in\cM_\mf G$, and the pair $(u_\cO,\varepsilon\boxtimes\varepsilon)$ is cuspidal. The values of $m((u_\cO)_a,w)$ for $a\in A_\mf G(u_\cO)$ and $w\looparrowright\cO=\dt E_7(a_2)+\dt A_1$ are given in \Cref{TblS3Z2}.
\begin{table}[htbp]
\caption{$m(u,w)/q^{22}$ for $u\in\cO=\dt E_7(a_2)+\dt A_1$ and $w\looparrowright\cO$}
\label{TblS3Z2}
\centering
\begin{tabular}{ll}
\toprule
$a$ & $m((u_\cO)_a,w)/q^{22}$ \\
\midrule[0.08em]
$((111),0)$ & $1+\delta_{(u_\cO,\varepsilon\boxtimes1)}+4\delta_{(u_\cO,r\boxtimes1)}+\zeta_{(\dt D_4,\cO_0,\cE_0)}\gamma_{(u_\cO,1\boxtimes\varepsilon)}+4\zeta_{(\dt D_4,\cO_0,\cE_0)}\gamma_{(u_\cO,r\boxtimes\varepsilon)}+\xi_{(u_\cO,\varepsilon\boxtimes\varepsilon)}$ \\
$((21),0)$ & $1-\delta_{(u_\cO,\varepsilon\boxtimes1)}\phantom{+4\delta_{(u_\cO,r\boxtimes1)}}\;\,+\zeta_{(\dt D_4,\cO_0,\cE_0)}\gamma_{(u_\cO,1\boxtimes\varepsilon)}\phantom{+4\zeta_{(\dt D_4,\cO_0,\cE_0)}\gamma_{(u_\cO,r\boxtimes\varepsilon)}}\;\,-\xi_{(u_\cO,\varepsilon\boxtimes\varepsilon)}$ \\
$((3),0)$ & $1+\delta_{(u_\cO,\varepsilon\boxtimes1)}-2\delta_{(u_\cO,r\boxtimes1)}+\zeta_{(\dt D_4,\cO_0,\cE_0)}\gamma_{(u_\cO,1\boxtimes\varepsilon)}-2\zeta_{(\dt D_4,\cO_0,\cE_0)}\gamma_{(u_\cO,r\boxtimes\varepsilon)}+\xi_{(u_\cO,\varepsilon\boxtimes\varepsilon)}$ \\
$((111),1)$ & $1+\delta_{(u_\cO,\varepsilon\boxtimes1)}+4\delta_{(u_\cO,r\boxtimes1)}-\zeta_{(\dt D_4,\cO_0,\cE_0)}\gamma_{(u_\cO,1\boxtimes\varepsilon)}-4\zeta_{(\dt D_4,\cO_0,\cE_0)}\gamma_{(u_\cO,r\boxtimes\varepsilon)}-\xi_{(u_\cO,\varepsilon\boxtimes\varepsilon)}$ \\
$((21),1)$ & $1-\delta_{(u_\cO,\varepsilon\boxtimes1)}\phantom{+4\delta_{(u_\cO,r\boxtimes1)}}\;\,-\zeta_{(\dt D_4,\cO_0,\cE_0)}\gamma_{(u_\cO,1\boxtimes\varepsilon)}\phantom{+4\zeta_{(\dt D_4,\cO_0,\cE_0)}\gamma_{(u_\cO,r\boxtimes\varepsilon)}}\;\,+\xi_{(u_\cO,\varepsilon\boxtimes\varepsilon)}$ \\
$((3),1)$ & $1+\delta_{(u_\cO,\varepsilon\boxtimes1)}-2\delta_{(u_\cO,r\boxtimes1)}-\zeta_{(\dt D_4,\cO_0,\cE_0)}\gamma_{(u_\cO,1\boxtimes\varepsilon)}+2\zeta_{(\dt D_4,\cO_0,\cE_0)}\gamma_{(u_\cO,r\boxtimes\varepsilon)}-\xi_{(u_\cO,\varepsilon\boxtimes\varepsilon)}$ \\
\bottomrule
\end{tabular}
\end{table}
Since each value in \Cref{TblS3Z2} is a (non-negative) real number, we first of all deduce that the roots of unity $\zeta_{(\dt D_4,\cO_0,\cE_0)}\gamma_{(u_\cO,1\boxtimes\varepsilon)}$, $\zeta_{(\dt D_4,\cO_0,\cE_0)}\gamma_{(u_\cO,r\boxtimes\varepsilon)}$ and $\xi_{(u_\cO,\varepsilon\boxtimes\varepsilon)}$ are real numbers as well, so they can only be $+1$ or $-1$. Taking the sum of the first and fourth row of \Cref{TblS3Z2}, we obtain $2+2\delta_{(u_\cO,\varepsilon\boxtimes1)}+8\delta_{(u_\cO,r\boxtimes1)}\geqslant0$, which forces $\delta_{(u_\cO,r\boxtimes1)}=+1$. In turn, adding the third and the last row of the table gives $2+2\delta_{(u_\cO,\varepsilon\boxtimes1)}-4\geqslant0$, so we must have $\delta_{(u_\cO,\varepsilon\boxtimes1)}=+1$. Now let us assume that $\zeta_{(\dt D_4,\cO_0,\cE_0)}\gamma_{(u_\cO,r\boxtimes\varepsilon)}=-1$. Then the non-negativity of the entry in the last row of \Cref{TblS3Z2} shows that we also have $\zeta_{(\dt D_4,\cO_0,\cE_0)}\gamma_{(u_\cO,1\boxtimes\varepsilon)}=\xi_{(u_\cO,\varepsilon\boxtimes\varepsilon)}=-1$, and the fourth row of the table is the unique one which is strictly positive (and is equal to $12$). On the other hand, assuming that $\zeta_{(\dt D_4,\cO_0,\cE_0)}\gamma_{(u_\cO,r\boxtimes\varepsilon)}=+1$, the non-negativity of the third row of \Cref{TblS3Z2} forces $\zeta_{(\dt D_4,\cO_0,\cE_0)}\gamma_{(u_\cO,1\boxtimes\varepsilon)}=\xi_{(u_\cO,\varepsilon\boxtimes\varepsilon)}=+1$, and then the first row of the table is the unique one which is strictly positive (and is equal to $12$). So we see that, regardless of the value of $\zeta_{(\dt D_4,\cO_0,\cE_0)}\gamma_{(u_\cO,r\boxtimes\varepsilon)}$, there exists a unique $\mf G^F$-conjugacy class which satisfies condition (b) in \Cref{Defgood}: If $\zeta_{(\dt D_4,\cO_0,\cE_0)}\gamma_{(u_\cO,r\boxtimes\varepsilon)}=+1$, we set $u_0:=u_\cO$, while in the case where $\zeta_{(\dt D_4,\cO_0,\cE_0)}\gamma_{(u_\cO,r\boxtimes\varepsilon)}=-1$, we take $u_0:=(u_\cO)_{(111),1}$. Using the criterion in \Cref{Criteriongood} (combined with \cite[Lm.~46]{MizE7E8}), we deduce that $u_0$ is good. So the $\mf G^F$-class of $u_0$ is the unique good one contained in $(\dt E_7(a_2)+\dt A_1)^F$, and \Cref{Thmgood} holds for $\dt E_7(a_2)+\dt A_1$. Furthermore, with respect to this choice of $u_0$, we have
\[\delta_{(u_0,1\boxtimes1)}=\delta_{(u_0,\varepsilon\boxtimes1)}=\delta_{(u_0,r\boxtimes1)}=\zeta_{(\dt D_4,\cO_0,\cE_0)}\gamma_{(u_0,1\boxtimes\varepsilon)}=\zeta_{(\dt D_4,\cO_0,\cE_0)}\gamma_{(u_0,r\boxtimes\varepsilon)}=\xi_{(u_0,\varepsilon\boxtimes\varepsilon)}=+1.\]
\end{0}

\begin{0}{\bf The class $\cO=2\dt A_4$ with $A_\mf G(u_\cO)\cong\mathfrak S_5$.}\label{p2S5}
Let $\cO=2\dt A_4\subseteq\mf G$ be the (unique) unipotent class with $A_\mf G(u_\cO)\cong\mathfrak S_5$. As in \ref{p3S5}, the argument in this case can be copied verbatim from \ref{p5S5}. In particular, there exists a unique $u_0=u_\cO\in\cO^F$ up to $\mf G^F$-conjugacy such that $F$ induces the identity map on $A_\mf G(u_0)$, the $\mf G^F$-class of $u_0$ is the unique good one contained in $(2\dt A_4)^F$ (so \Cref{Thmgood} holds for the class $2\dt A_4$), and (with the notation analogous to the one in \ref{p5S5}) we have
\[\delta_{\phi_{1680,22}}=\delta_{\phi_{4480,16}}=\delta_{\phi_{4536,18}}=\delta_{\phi_{1400,20}}=\delta_{\phi_{5670,18}}=\delta_{\phi_{70,32}}=\xi=+1.\]
\end{0}

\begin{0}{\bf Summary of Sections \ref{p5}--\ref{p2}.}\label{Summp5p2}
Our case-by-case analysis shows that \Cref{Thmgood} holds. Moreover, for any unipotent class $\cO\subseteq\mf G$ not excluded in \Cref{Thmgood}, any good element $u_0\in\cO^F$ and any $\varsigma\in\Irr(A_\mf G(u_0))$, the root of unity $\delta_{(u_0,\varsigma)}$, $\zeta_{\tau((u_0,\varsigma))}\gamma_{(u_0,\varsigma)}$ or $\xi_{(u_0,\varsigma)}$ (respectively) is always equal to $+1$, which proves \Cref{Roots1}(a). Finally, we note that if $\cO\subseteq\mf G$ is a unipotent class which admits a pair $(u_\cO,\varsigma)$ as in either \ref{Intermed} or \ref{CuspCS}, $\cO$ is not among the classes excluded in \Cref{Thmgood}, so \Cref{Roots1}(b) holds as well.

Our arguments also show that the classes excluded in the formulation of \Cref{Thmgood} need to be excluded, that is, for any of these \enquote{exceptional} classes $\cO\subseteq\mf G$, $\cO^F$ either does not contain any good elements or it contains more than one good $\mf G^F$-class.

As noted after \Cref{Roots1}, this completes the determination of the values of unipotent characters at unipotent elements for the groups $\dt E_8(q)$ where $q$ is any power of any prime $p$.
\end{0}

\begin{Rem}
(a) While our argumentation does of course depend upon the classification of the unipotent classes in $\mf G$ and the ones in $\mf G^F$, it does not rely on the specific representatives given in \cite{MizE7E8}, but merely on the structure of the groups $A_\mf G(u)$ for $u\in\mf G^F_{\mathrm{uni}}$. So even if some of the elements in Mizuno's list might be incorrect, this does not affect our arguments. Note that the classification of the unipotent classes in \cite{MizE7E8} (with some minor corrections as stated in \cite{Sp}) has been established independently by Liebeck--Seitz \cite{LieSei}.

(b) Now that we know all the values of the $R_x|_{\mf G^F_{\mathrm{uni}}}$ for $x\in X(\mf W)$, we can use formula \eqref{HeckeFormula} in \ref{Ree} to compute the numbers $|\mf B_0^Fw\mf B_0^F\cap O_u|$ for any $w\in\mf W$ and any $u\in\mf G^F_{\mathrm{uni}}$.

(c) Recall that our notion $w\looparrowright\cO$ (see \ref{LuMap}) includes the requirement that $w$ is of minimal length among the elements of the union of all conjugacy classes of $\mf W$ which are sent to the unipotent class $\cO\subseteq\mf G$ under Lusztig's map. By the definition of this map, one may expect that it should be enough to take any conjugacy class $\Xi\subseteq\mf W$ which is sent to $\cO$, and $w\in\Xi$ to be a representative of minimal length among the elements of $\Xi$. However, our argumentation would then no longer work in all cases. For example, considering the unipotent class $\cO=\dt D_7(a_1)$ in characteristic $p\geqslant5$ (see \ref{p5Z2}), both the classes $\dt D_7(a_1)$ and $\dt D_8(a_2)$ of $\mf W$ (notation of {\sffamily CHEVIE}) are mapped to $\cO$ under Lusztig's map (while if $p=2$, we have $\dt D_8(a_2)\mapsto\dt D_7(a_1)$ and $\dt D_7(a_1)\mapsto(\dt D_7(a_1))_2$; the class $(\dt D_7(a_1))_2$ only exists when $p=2$), but if we take $w$ of minimal length in $\dt D_8(a_2)$ (which is larger than the minimal length of elements in $\dt D_7(a_1)$), the computation in \ref{p5Z2} would read
\[0\leqslant m((u_\cO),w)=q^{26}\delta_{(u_\cO,1)}-q^{25}\delta_{(u_\cO,-1)}\varepsilon(a)\]
for $a\in A_\mf G(u_\cO)$, so here we could not pinpoint the sign $\delta_{(u_\cO,-1)}$. It is interesting that in the cases where the preimage of a given unipotent class $\cO$ under Lusztig's map is independent of $p$ (which is true for most $\cO$), our argument works regardless of which class $\Xi\subseteq\mf W$ with $\cO=\cO_\Xi$ we take in any given characteristic $p$, so we do not have to assume that $w$ is of minimal length among all conjugacy classes of $\mf W$ which are mapped to $\cO$. We only have to be careful with the unipotent classes $\dt D_7(a_1), \dt D_5+\dt A_2, \dt D_4+\dt A_2, \dt A_3+\dt A_2$.
\end{Rem}

\section{Representatives for the good unipotent \texorpdfstring{$\mf G^F$}{GF}-classes}\label{Repgood}

Considering the explicit list of representatives in \cite{MizE7E8}, it may be desirable to be able to recognise a good element (in the sense of \Cref{Defgood}) among this list. In this section, we show how this can be achieved for almost all unipotent classes of $\mf G$.

\begin{0}\label{CGFu0}
Let $\cO\subseteq\mf G$ be a unipotent class which is not excluded in \Cref{Thmgood}, and let $u_0\in\cO^F$ be a representative for the unique good $\mf G^F$-class contained in $\cO^F$. Now that we have determined the values $m(u_0,w)$ for $w\looparrowright\cO$, we can compute the centraliser order $|C_{\mf G^F}(u_0)|$ as explained in \Cref{Criteriongood}. So if the $\mf G^F$-class of $u_0$ happens to be the unique one inside $\cO^F$ with centraliser order $|C_{\mf G^F}(u_0)|$, we can identify an explicit representative from the list in \cite{MizE7E8}. (This holds in particular if $u_0\in\cO^F$ is unique up to $\mf G^F$-conjugacy with the property that $F$ acts trivially on $A_\mf G(u_0)$.)
\end{0}

In the cases where \ref{CGFu0} does not apply, one can try to perform direct computations to check condition (b) in \Cref{Defgood} for a given concrete representative $u_0=z_n$ in Mizuno's list \cite{MizE7E8}. The following lemma is very useful for this purpose.

\begin{Lm}[see {\cite[Lemma~3.2.24]{HDiss}}; cf.\ {\cite[\S 2]{LuWeylUni}}]\label{LmBwB}
Let $w\in\mf W$ be an element of length $e\in\N_0$ in $(\mf W,S)$, and let $w=s_{i_1}\cdot s_{i_2}\cdot\ldots\cdot s_{i_e}$ be a reduced expression for $w$ (where $s_{i_j}\in S$ is the reflection with respect to the simple root $\alpha_{i_j}$, $1\leqslant i_j\leqslant8$). Let $t_1, t_2,\ldots, t_e\in\Fp^\times$, and let
\[u_0:=u_{\alpha_{i_1}}(t_1)\cdot u_{\alpha_{i_2}}(t_2)\cdot\ldots\cdot u_{\alpha_{i_e}}(t_e)\in\mf U_0^F.\]
Then, denoting by $\dot w_0\in N_\mf G(\mf T_0)^F$ a representative of the longest element $w_0$ of $(\mf W,S)$, we have
\[\dot w_0 u_0\dot w_0^{-1}\in\mf B_0^Fw\mf B_0^F.\]
\end{Lm}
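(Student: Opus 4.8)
The plan is to transport the problem through conjugation by $\dot w_0$ and then reduce it to the multiplicativity of Bruhat cells along a reduced word. The essential structural input, special to type $\dt E_8$, is that the longest element $w_0$ acts as $-1$ on the root system $\Phi$, hence is central in $\mf W$. The single geometric fact I would isolate first is the rank-one statement: for a simple root $\alpha$ and any $\xi\in k^\times$, the element $u_{-\alpha}(\xi)$ lies in $\mf B_0 s_\alpha\mf B_0$, where $s_\alpha\in S$ is the reflection attached to $\alpha$. This is just the Bruhat decomposition inside the minimal parabolic $\mf P_\alpha=\mf B_0\cup\mf B_0 s_\alpha\mf B_0$, reduced to $\SL_2$ or $\PGL_2$, where the negative root subgroup meets only the big cell.

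Using $w_0=-1$, conjugation by $\dot w_0$ carries each factor of $u_0$ to the opposite root subgroup: we have $\dot w_0\,u_{\alpha_{i_j}}(t_j)\,\dot w_0^{-1}=u_{-\alpha_{i_j}}(\pm t_j)$, where the sign is an irrelevant structure constant. Since $t_j\neq 0$, the rank-one input gives $u_{-\alpha_{i_j}}(\pm t_j)\in\mf B_0 s_{i_j}\mf B_0$, and therefore
\[
\dot w_0\,u_0\,\dot w_0^{-1}=\prod_{j=1}^{e}u_{-\alpha_{i_j}}(\pm t_j)\in\prod_{j=1}^{e}\mf B_0 s_{i_j}\mf B_0.
\]
Next I would invoke the product formula for Bruhat cells: if $\length(ws)=\length(w)+1$ then $(\mf B_0 w\mf B_0)(\mf B_0 s\mf B_0)=\mf B_0 ws\mf B_0$. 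Because $w=s_{i_1}\cdots s_{i_e}$ is a reduced expression, every prefix $s_{i_1}\cdots s_{i_j}$ has length $j$, so an induction on $j$ collapses the product above to a single cell,
\[
\prod_{j=1}^{e}\mf B_0 s_{i_j}\mf B_0=\mf B_0\,(s_{i_1}\cdots s_{i_e})\,\mf B_0=\mf B_0 w\mf B_0,
\]
whence $\dot w_0\,u_0\,\dot w_0^{-1}\in\mf B_0 w\mf B_0$ inside the algebraic group. It is precisely the centrality of $w_0$ that makes the target cell come out as $\mf B_0 w\mf B_0$ rather than $\mf B_0\,(w_0 w w_0)\,\mf B_0$.

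Finally, since $u_0\in\mf U_0^F$ and $\dot w_0\in N_\mf G(\mf T_0)^F$, the element $\dot w_0\,u_0\,\dot w_0^{-1}$ is $F$-fixed; the sharp form of the Bruhat decomposition $(\mf B_0 w\mf B_0)^F=\mf B_0^F w\mf B_0^F$ (recorded in the Notation) then upgrades the membership to $\dot w_0\,u_0\,\dot w_0^{-1}\in\mf B_0^F w\mf B_0^F$, as claimed. I expect the only point requiring care to be the inductive application of the product formula: one must check that each prefix of the reduced word is itself reduced, so that lengths genuinely add at every step and the cell product does not open up into a union of cells. Everything else is a routine assembly of standard facts about the Bruhat decomposition together with the $\dt E_8$-specific identity $w_0=-1$.
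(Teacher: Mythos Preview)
Your proof is correct and is precisely the ``basic computation in terms of root systems and groups with a $BN$-pair'' that the paper alludes to (without spelling out, referring instead to \cite[3.2.24]{HDiss}): conjugate by $\dot w_0$ to land in negative simple root subgroups, use the rank-one Bruhat decomposition $u_{-\alpha}(\xi)\in\mf B_0 s_\alpha\mf B_0$ for $\xi\neq 0$, multiply along the reduced word, and pass to $F$-fixed points. The only cosmetic remark is that the structure constant in $\dot w_0\,u_{\alpha}(t)\,\dot w_0^{-1}=u_{-\alpha}(ct)$ need not literally be $\pm 1$ for an arbitrary representative $\dot w_0$, but it is always nonzero, which is all you use; and your caveat about prefixes of reduced words being reduced is automatic in any Coxeter group, so no extra care is needed there.
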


\begin{proof}
This is a basic computation in terms of root systems and groups with a $BN$-pair, for which we refer to \cite[3.2.24]{HDiss}.
\end{proof}

We now explain how \Cref{LmBwB} can be applied to show that a given representative $z_n$ in \cite{MizE7E8} is good. We use the following notation until the end of this paper: If $\alpha\in\Phi^+$ is written $\alpha=\sum_{i=1}^8c_i\alpha_i$ with $c_i\in\N_0$, we set $u_{1^{c_1}2^{c_2}\ldots8^{c_8}}:=u_\alpha(1)$, and in addition we omit $i$ in the case where $c_i=0$. So for example, we write $u_{234^25}$ instead of $u_{\alpha_2+\alpha_3+2\alpha_4+\alpha_5}(1)$.

\begin{0}\label{Rel}
In most cases, we first need to manipulate the $z_n$ in Mizuno's list in order to obtain an expression of the form $u_{i_1}u_{i_2}\cdots u_{i_m}$ where $1\leqslant i_1,\ldots, i_m\leqslant8$ (so that we can apply \Cref{LmBwB}). To this end, we use the relations (R) in \cite[Chapter~3~(p.~23)]{StChev}. However, note that these relations depend upon the structure constants with respect to a chosen Chevalley basis of the Lie algebra underlying $\mf G$. Since we take Mizuno's representatives, we have to rely on his choices, but unfortunately, some of the structure constants in \cite[p.~460]{MizE7E8} are incorrect. At least if $p=2$, this does not play any role since $u_\alpha(1)^{-1}=u_\alpha(-1)=u_\alpha(1)$ for any $\alpha\in\Phi$. To illustrate the main ideas of our computations, we assume that $p=2$ for now (see \ref{Comppneq2} below for the case $p\neq2$). For $1\leqslant j\leqslant8$, we set
\[\omega_j:=u_{\alpha_j}(1)u_{-\alpha_j}(1)u_{\alpha_j}(1)\in\mf G^F.\]
Using the fact that $1=-1$ in $k$ and that the root system of type $\dt E_8$ is simply laced, we have the following identities in $\mf G^F$, where $\alpha,\beta\in\Phi^+$ and $1\leqslant j\leqslant8$ (see \cite[p.~23]{StChev}):
\begin{enumerate}[label=(\roman*)]
\item $u_\alpha(1)^2=1$.
\item $u_\alpha(1)\cdot u_\beta(1)\cdot u_\alpha(1)\cdot u_\beta(1)=\begin{cases}
u_{\alpha+\beta}(1) &\text{if } \alpha+\beta\in\Phi, \\
\hfil1 &\text{if } \alpha+\beta\notin\Phi. \end{cases}$
\item $\omega_ju_\alpha(1)\omega_j^{-1}=u_{s_j(\alpha)}(1)$.
\end{enumerate}
\end{0}

\begin{0}\label{ConjMeth}
As in \ref{Rel}, we assume that $p=2$. The simplest case occurs for the regular unipotent class $\dt E_8\subseteq\mf G$: Let us consider the element
\[z_1=u_1u_3u_4u_2u_5u_6u_7u_8\in\dt E_8^F\]
(see \cite[Lm.~37]{MizE7E8}). We form the corresponding product of simple reflections in $\mf W$, that is, we set $w:=s_1s_3s_4s_2s_5s_6s_7s_8\in\mf W$ (a Coxeter element of $\mf W$). Since this is a reduced expression for $w$, \Cref{LmBwB} shows that we have $\mf B_0^Fw\mf B_0^F\cap O_{z_1}\neq\varnothing$. Furthermore, $w$ is a Coxeter element of $\mf W$, so we have $w\looparrowright\dt E_8$ (see \ref{LuMap}) and $u_0\in\dt E_8^F$ is therefore good.

Considering the class $\dt E_8(a_1)\subseteq\mf G$, our candidate for the good element in Mizuno's list \cite[Lm.~38]{MizE7E8} is
\[z_{11}=u_1u_2u_{24}u_{34}u_5u_6u_7u_8\in\dt E_8(a_1)^F.\]
Using the relations in \Cref{Rel}, we get
\[u_{24}=u_2u_4u_2u_4\quad\text{and}\quad u_{34}=u_4u_3u_4u_3,\]
and then
\[z_{11}=u_1u_2^2u_4u_2u_4^2u_3u_4u_3u_5u_6u_7u_8=u_1u_4u_2u_3u_4u_3u_5u_6u_7u_8.\]
Setting $w:=s_1s_4s_2s_3s_4s_3s_5s_6s_7s_8\in\mf W$, $w$ is of length $10$, and we use {\sffamily CHEVIE} \cite[\S 6]{MiChv} to verify that $w\looparrowright\dt E_8(a_1)$, so \Cref{LmBwB} shows that $z_{11}$ is good. The argument for $z_{17}\in\dt E_8(a_2)^F$ (see \cite[Lm.~39]{MizE7E8}) is similar: We have
\[z_{17}=u_1u_3u_2u_{24}u_{45}u_{56}u_{67}u_8=u_1u_3u_4u_2u_5u_4u_6u_5u_7u_6u_7u_8,\]
$w:=s_1s_3s_4s_2s_5s_4s_6s_5s_7s_6s_7s_8$ is of length $12$, and $w\looparrowright\dt E_8(a_2)$.

In the other cases, we first need to conjugate a given Mizuno representative $z_n\in\cO^F$ by a suitable element of $\mf G^F$ to obtain a desired element $u_0$ with which we can apply \Cref{LmBwB}. The main idea to achieve this is to repeatedly use relation (iii) in \Cref{Rel}. More precisely, let us assume that $z_n$ is written as $z_n=u_{\beta_1}(1)\cdot u_{\beta_2}(1)\cdots u_{\beta_r}(1)$ in \cite{MizE7E8}, with $\beta_i\in\Phi^+$ for $1\leqslant i\leqslant r$. (For each unipotent class $\cO\subseteq\mf G$, there exists a unique representative $z_n\in\cO^F$ in \cite{MizE7E8} which is of this form, and this will always be our candidate for the good element in $\cO^F$.) We now want to find a list $[i_s, i_{s-1},\ldots, i_1]$ of numbers $i_j\in\{1,\ldots,8\}$ with the property
\[\omega_{i_j}\cdots\omega_{i_1}\cdot z_n\cdot\omega_{i_1}^{-1}\cdots\omega_{i_j}^{-1}=u_{\beta_1^{(j)}}(1)\cdot u_{\beta_2^{(j)}}(1)\cdots u_{\beta_r^{(j)}}(1)\quad\text{for }j\in\{1,\ldots,s\}\]
where $\beta_i^{(j)}\in\Phi^+$ for all $1\leqslant i\leqslant r$, $1\leqslant j\leqslant s$, in order to arrive at a suitable product $u_{i_1}u_{i_2}\cdots u_{i_m}$ with $1\leqslant i_1,\ldots, i_m\leqslant8$ (after possibly using the relations (i) and (ii) in \Cref{Rel}) and with $m\geqslant1$ as small as possible. Then we form the corresponding product $w:=s_{i_1}s_{i_2}\cdots s_{i_m}\in\mf W$ and check whether this is a reduced expression and $w\looparrowright\cO$ (after which \Cref{LmBwB} would show that $z_n$ is good). This can typically be achieved by means of minimising the sum of the heights $\sum_{i=1}^r\hgt(\beta_i^{(s)})$ (with respect to the simple roots $\alpha_1,\ldots,\alpha_8$) in the above expression.
\end{0}

\begin{Ex}
As an example for the type of computation outlined in \ref{ConjMeth}, let us consider the class $\dt E_7\subseteq\mf G$ and the representative
\[z_{30}=u_1u_{234}u_{345}u_{245}u_6u_7u_8\in\dt E_7^F\]
(see \cite[Lm.~41]{MizE7E8}). The sum of the heights of the roots which appear as indices is $13$, which we want to reduce by means of conjugating $z_{30}$ by suitable $\omega_j$. We illustrate this via the following picture, where the $\omega_j$ on the left is the element that we conjugate with, and where the number in parentheses is the sum of the heights of the roots in the indices:
\begin{align*}
 & u_1u_{234}u_{345}u_{245}u_{6}u_{7}u_{8}\quad(13) \\
\omega_2\quad\leadsto\quad & u_{1}u_{34}u_{2345}u_{45}u_{6}u_{7}u_{8}\quad(12) \\
\omega_3\quad\leadsto\quad & u_{13}u_{4}u_{245}u_{345}u_{6}u_{7}u_{8}\quad(12) \\
\omega_5\quad\leadsto\quad & u_{13}u_{45}u_{24}u_{34}u_{56}u_{7}u_{8}\quad(12) \\
\omega_4\quad\leadsto\quad & u_{134}u_{5}u_{2}u_{3}u_{456}u_{7}u_{8}\quad(11) \\
\omega_1\quad\leadsto\quad & u_{34}u_{5}u_{2}u_{13}u_{456}u_{7}u_{8}\quad(11) \\
\omega_3\quad\leadsto\quad & u_{4}u_{5}u_{2}u_{1}u_{3456}u_{7}u_{8}\quad(10)
\end{align*}
So we managed to decrease the sum of the heights of the roots on the right, but how to continue? Since we require all of the indices of the element on the right to remain positive roots (and since we do not make any progress by conjugating with $\omega_3$ again), the only choice is to conjugate with $\omega_6$.~---~However, note that this increases the sum of the heights on the right! One possibility to eventually arrive at the desired product of the form $u_{i_1}\cdots u_{i_7}$ on the right is to continue the above computation by setting
\[g:=\omega_8\omega_7\omega_6\omega_5\omega_4\omega_2\omega_3\omega_4\omega_1\omega_3
\omega_5\omega_4\omega_2\omega_6\omega_5\omega_4\omega_7\omega_8\omega_6\omega_5\omega_7
\omega_6\omega_3\omega_1\omega_4\omega_5\omega_3\omega_2\in\mf G^F,\]
and we then have
\[gz_{30}g^{-1}=u_6u_7u_5u_2u_4u_3u_1.\]
The corresponding element $w:=s_6s_7s_5s_2s_4s_3s_1\in\mf W$ is of length $7$ and $w\looparrowright\dt E_7$, so \Cref{LmBwB} shows that $z_{30}$ is good. But our chosen $g$ is certainly by far not the only or a \enquote{canonical} one which will suit our demands, and we could only find it by ad hoc and partly non-systematic computations. The situation is even worse for representatives $z_n$ in \cite{MizE7E8} which are the product of eight $u_\beta(1)$.
\end{Ex}

\begin{0}\label{DescrTblMiz}
We still assume that $p=2$. \Cref{TblMizgood} lists the representatives $z_n$ in a given unipotent class $\cO$ which we claim to be good. (To save space, we do not list the classes $\cO$ with $A_\mf G(u_\cO)=\{1\}$ since in this case $\cO^F$ is a single $\mf G^F$-class and the representative $z_n\in\cO^F$ is necessarily good.) The only unipotent class in characteristic $2$ which does not contain any good elements is $\dt D_8(a_3)$ when $q\equiv-1\mod 3$; any other unipotent class contains a unique good $\mf G^F$-class (see \ref{p2S3}(b), \Cref{Thmgood}). Unfortunately, we could not show that the representatives $z_{39}\in(\dt E_7(a_1)+\dt A_1)^F$, $z_{44}\in\dt D_8(a_1)^F$ and $z_{50}\in(\dt E_7(a_2)+\dt A_1)^F$ are good (since \ref{CGFu0} is not applicable and since we did not manage to successfully perform the argument in \ref{ConjMeth}). For any other unipotent class $\cO$ with $A_\mf G(u_\cO)\neq\{1\}$, the entry in the column of \Cref{TblMizgood} named \enquote{Remark} indicates the proof that the listed $z_n$ is good, with the following conventions/notation: We refer to \ref{CGFu0} whenever it is applicable. Otherwise, an entry $g\leadsto u_{i_1}u_{i_2}\cdots u_{i_m}$ means that $g\in\mf G^F$ satisfies $gz_ng^{-1}=u_{i_1}u_{i_2}\cdots u_{i_m}$ (with $1\leqslant i_1,\ldots, i_m\leqslant8$), that $w:=s_{i_1}s_{i_2}\cdots s_{i_m}\in\mf W$ is a reduced expression and $w\looparrowright\cO$ (so that $z_n$ is good by \Cref{LmBwB}); the expression $[j_sj_{s-1}\ldots j_1]$ with $1\leqslant j_i\leqslant8$ stands for the product $\omega_{j_s}\omega_{j_{s-1}}\cdots\omega_{j_1}\in\mf G^F$.
\end{0}

\begin{0}\label{Comppneq2}
We now assume that $p\neq2$ and show how one can see that a given representative $z_n$ in Mizuno's list is good. As noted in \ref{Rel}, the computation of the kind in \ref{ConjMeth} becomes more subtle since we have to take the structure constants with respect to a chosen Chevalley basis of the Lie algebra underlying $\mf G$ into account and since some of these constants given in \cite[p.~460]{MizE7E8} are incorrect. The good news is that there are fewer classes where we actually need to do a computation as in \ref{ConjMeth} now. First of all, recall from \ref{p5Z2}(b) and \ref{p3Z2} that any $F$-stable element of the classes $\dt E_7(a_1)+\dt A_1$ and $\dt D_6(a_1)+\dt A_1$ is good, so we do not have to consider them here (in particular, $z_{39}\in(\dt E_7(a_1)+\dt A_1)^F$ and $z_{106}\in(\dt D_6(a_1)+\dt A_1)^F$ are good). Furthermore, the argument in \ref{CGFu0} applies to more classes when $p\geqslant3$. The unipotent classes $\cO\subseteq\mf G$ with $A_\mf G(u_\cO)\neq\{1\}$ which are not covered by this (so that a concrete computation as in \ref{ConjMeth} is required) are the following:
\begin{align*}
\dt E_8\text{ (only when }p=3,5),\;\dt E_8(a_1) \;(p=3),\;\dt E_7+\dt A_1,\;\dt E_7\;(p=3),\;\dt D_8,\;\dt D_8(a_1), \\
\dt E_6+\dt A_1\;(p=3),\; \dt D_8(a_3),\;\dt D_6+\dt A_1,\;\dt E_6\;(p=3),\;\dt A_5+2\dt A_1,\; (\dt A_5+\dt A_1)''.
\end{align*}
Regarding the regular unipotent class $\dt E_8$, note that the argument in \ref{ConjMeth} does not use the relations (i)--(iii) in \ref{Rel}, so it is valid for $p\geqslant3$ as well.

Let us consider the class $\dt E_8(a_1)$ and the representative $z_{11}\in\dt E_8(a_1)^F$ defined as in \ref{ConjMeth}. With the structure constants of \cite{MizE7E8}, we have
\[u_{24}=u_2^{-1}u_4^{-1}u_2u_4\quad\text{and}\quad u_{34}^{-1}=u_4^{-1}u_3^{-1}u_4u_3,\]
and we see that the computation analogous to the one in \ref{ConjMeth} does not quite yield the desired result. To fix this, we first conjugate $z_{11}$ with a suitable element of $\mf T_0^F$: For $\alpha\in\Phi$, let $\alpha^\vee\colon\kunits\rightarrow\mf T_0$ be the corresponding co-root. Then we have $t:=\alpha_1^\vee(-1)\in\mf T_0^F$ and
\[tz_{11}t^{-1}=u_1u_2u_{24}u_{34}(-1)u_5u_6u_7u_8=u_1u_4(-1)u_2u_3(-1)u_4u_3u_5u_6u_7u_8,\]
so we can apply \Cref{LmBwB} with $w:=s_1s_4s_2s_3s_4s_3s_5s_6s_7s_8\looparrowright\dt E_8(a_1)$ (as in \ref{ConjMeth}) to deduce that $z_{11}$ is good.

As for the class $\dt E_7+\dt A_1$, setting $t:=\alpha_1^\vee(-1)\alpha_2^\vee(-1)\alpha_3^\vee(-1)\alpha_4^\vee(-1)\alpha_5^\vee(-1)\in\mf T_0^F$ gives
\[(u_3^{-1}t)\cdot z_{21}\cdot(u_3^{-1}t)^{-1}=u_1u_3(-1)u_1(-1)u_2u_4u_2(-1)u_3u_5u_4(-1)u_6u_5(-1)u_6(-1)u_7u_8,\]
so we can apply \Cref{LmBwB} with $w:=s_1s_3s_1s_2s_4s_2s_3s_5s_4s_6s_5s_6s_7s_8\looparrowright\dt E_7+\dt A_1$ (which we also used in the case $p=2$) to deduce that $z_{21}$ is good. Similarly, regarding the class $\dt D_8$, we set $t:=\alpha_1^\vee(-1)\alpha_3^\vee(-1)\alpha_4^\vee(-1)\alpha_7^\vee(-1)\alpha_8^\vee(-1)\in\mf T_0^F$ to obtain
\[(u_3^{-1}t)\cdot z_{36}\cdot(u_3^{-1}t)^{-1}=u_1u_3^{-1}u_1^{-1}u_2u_4u_2^{-1}u_3u_5u_4^{-1}u_6u_5^{-1}  u_7u_6^{-1}u_8u_7^{-1}u_8^{-1},\]
and $w:=s_1s_3s_1s_2s_4s_2s_3s_5s_4s_6s_5s_7s_6s_8s_7s_8\looparrowright\dt D_8$ is again the same element of $\mf W$ that we used when $p=2$, so $z_{36}$ is good.

Now the classes $\dt E_7$, $\dt E_6+\dt A_1$ and $\dt E_6$ only need to be considered when $p=3$, and in this case we have seen in \ref{p3Z3} that a good representative in one of these classes is characterised by the property that it is $\mf G^F$-conjugate to its inverse. So it suffices to prove that the elements $z_{30}\in\dt E_7^F$, $z_{65}\in(\dt E_6+\dt A_1)^F$ and $z_{73}\in\dt E_6^F$ have this property. This can be achieved with the method described in \cite[Lm.~4.1.13]{HDiss}: Setting
\[t:=\alpha_3^\vee(-1)\alpha_7^\vee(-1)\alpha_8^\vee(-1)\in\mf T_0^F\quad\text{and}\quad v:=u_8u_7u_8u_6u_7u_8u_1^{-1}\in\mf U_0^F\]
gives $(tv)z_{30}(tv)^{-1}=z_{30}^{-1}$, and setting
\[t:=\alpha_4^\vee(-1)\alpha_5^\vee(-1)\alpha_7^\vee(-1)\alpha_8^\vee(-1)\in\mf T_0^F\quad\text{and}\quad v:=u_8u_7u_8u_{1345}^{-1}u_{1234}^{-1}\in\mf U_0^F\]
gives both $(tv)z_{65}(tv)^{-1}=z_{65}^{-1}$ and also $(tv)z_{73}(tv)^{-1}=z_{73}^{-1}$.

Let us consider the class $\dt D_6+\dt A_1$ and the element $z_{85}\in(\dt D_6+\dt A_1)^F$. We want to use a similar argument as in the case where $p=2$. Setting 
\begin{align*}
g:=&\omega_4\omega_5\omega_8\omega_7\omega_3\omega_1\omega_6\omega_5\omega_4\omega_2\omega_3
\omega_4\omega_5\omega_6\omega_5\omega_4\omega_1\omega_3\omega_4\omega_2\omega_5\omega_6
\omega_3\omega_4\omega_2\omega_7\omega_6\cdot \\
&\omega_5\omega_4\omega_3\omega_1\omega_6\omega_5
\omega_4\omega_2\omega_5\omega_4\omega_3\omega_2\omega_4\omega_5\omega_3
\omega_4\omega_1\omega_3\omega_4\omega_2\omega_8\omega_7\omega_5\omega_3\omega_6\omega_7\in\mf G^F,
\end{align*}
we get
\begin{equation}\label{Conjz85}
gz_{85}g^{-1}=u_{6}(\delta_{6})u_{7}(\delta_{7})u_{234}(\delta_{234})u_{345}(\delta_{345})u_{245}(\delta_{245})u_{24}(\delta_{24})u_{1}(\delta_1),
\end{equation}
where the coefficients $\delta_\beta\in\{\pm1\}$ depend on the structure constants in \cite[p.~460]{MizE7E8}. Instead of explicitly computing these coefficients, we show that the products on the right hand side of \eqref{Conjz85} are pairwise conjugate for any possible distribution of the signs $\delta_6, \delta_7, \delta_{234}, \ldots,\delta_1$. It suffices to show that $u_{6}u_{7}u_{234}u_{345}u_{245}u_{24}u_{1}$ is $\mf G^F$-conjugate to $u_{6}(\delta_{6})u_{7}(\delta_{7})u_{234}(\delta_{234})u_{345}(\delta_{345})u_{245}(\delta_{245})u_{24}(\delta_{24})u_{1}(\delta_1)$. To see this, we make the ansatz
\[t\cdot u_{6}u_{7}u_{234}u_{345}u_{245}u_{24}u_{1}\cdot t^{-1}\overset{!}{=}u_{6}(\delta_{6})u_{7}(\delta_{7})u_{234}(\delta_{234})u_{345}(\delta_{345})u_{245}(\delta_{245})u_{24}(\delta_{24})u_{1}(\delta_1),\]
where $t=\alpha_1^\vee(\xi_1)\cdot\alpha_2^\vee(\xi_2)\cdots\alpha_8^\vee(\xi_8)\in\mf T_0^F$, with $\xi_i\in\{\pm1\}$. This leads to a system of equations in the unknowns $\xi_1,\ldots,\xi_8$, a solution to which is given by
\begin{align*}
\xi_1=\delta_{245}\delta_{345},\;&\xi_2=\delta_1\delta_6\delta_{245}\delta_{234}\delta_{345},\;\xi_3=\delta_1,\;\xi_4=\delta_{24}\delta_{245}\delta_{234}\delta_{345}, \\
&\xi_5=\delta_6,\;\xi_6=\delta_{234}\delta_{345},\;\xi_7=1,\;\xi_8=\delta_7\delta_{234}\delta_{345}.
\end{align*}
Hence, by conjugating with a suitable torus element as above, we can modify the signs $\delta_{\beta}$ in \eqref{Conjz85} to our likings. We see that we have
\[u_2\cdot(u_{6}u_{7}u_{234}u_{345}u_{245}(-1)u_{24}(-1)u_{1})\cdot u_2^{-1}=u_6u_7u_4^{-1}u_3^{-1}u_4u_2u_5^{-1}u_4^{-1}u_3u_2^{-1}u_4u_5u_1,\]
so we can apply \Cref{LmBwB} with $w:=s_6s_7s_4s_3s_4s_2s_5s_4s_3s_2s_4s_5s_1\looparrowright\dt D_6+\dt A_1$ to deduce that $z_{85}$ is good. With a similar computation, we find that the elements $z_{122}\in(\dt A_5+2\dt A_1)^F$ and $z_{132}\in(\dt A_5+\dt A_1)''^F$ are good.

We did not find an argument for the classes $\dt D_8(a_3)$ (when $p=3$) and $\dt D_8(a_1)$.~---~Note that a computation for the class $\dt D_8(a_3)$ would have to be very subtle since there does not even exist a good element when $p\geqslant5$ and $q\equiv-1\mod 3$ (see \ref{p5S3}(b)). In particular, since the computation described in \Cref{ConjMeth} does not catch such congruence relations, it cannot work here.
\end{0}

\begin{0}{\bf Summary of \Cref{Repgood}.}
The obvious candidate for the good representative of a given unipotent class in \cite{MizE7E8} is always the first one listed in the corresponding lemma (that is, the one which appears in Table~3 in loc.\ cit., but this table contains some typographical errors). If $p=2$, we could verify this in \ref{ConjMeth}--\ref{DescrTblMiz} with the exception of the elements $z_{39}\in(\dt E_7(a_1)+\dt A_1)^F$, $z_{44}\in\dt D_8(a_1)^F$ and $z_{50}\in(\dt E_7(a_2)+\dt A_1)^F$. It is in fact not guaranteed that there even exists a uniform representative in \cite{MizE7E8} which will be good for all $q$: For example, with regard to the split elements, it is noted in \cite[p.~591]{BeSp} that this happens for the class $\dt E_7(a_1)+\dt A_1$ in good characteristic, so something similar may very well apply to some of the above classes with respect to our notion of good elements. As for the case $p>2$, we could not single out a good representative for the classes $\dt D_8(a_3)$ (when $p=3$) and $\dt D_8(a_1)$, but our arguments in \ref{Comppneq2} work for any other class. (In contrast to the case where $p=2$, this includes the classes $\dt E_7(a_1)+\dt A_1$ and $\dt E_7(a_2)+\dt A_1$.)
\end{0}

\begin{landscape}
\begin{longtable}[htbp]{p{2cm} p{10.5cm} p{7.5cm}}
\caption{Good representatives in \cite{MizE7E8} when $p=2$} \label{TblMizgood} \\
\toprule
$\cO\subseteq G$ & Good element $z_n\in\cO^F$ in \cite{MizE7E8} & Remark \\
\midrule[0.08em]
$\dt E_8$ & $z_{1{\phantom1}}=u_1u_3u_4u_2u_5u_6u_7u_8$ & $1\leadsto u_1u_3u_4u_2u_5u_6u_7u_8$ (\ref{LmBwB}) \\
\midrule
$\dt E_8(a_1)$ & $z_{11}=u_1u_2u_{24}u_{34}u_5u_6u_7u_8$ & $1\leadsto u_1u_4u_2u_3u_4u_3u_5u_6u_7u_8$ (\ref{LmBwB}) \\
\midrule
$\dt E_8(a_2)$ & $z_{17}=u_1u_3u_2u_{24}u_{45}u_{56}u_{67}u_8$ & $1\leadsto u_1u_3u_4u_2u_5u_4u_6u_5u_7u_6u_7u_8$ (\ref{LmBwB}) \\
\midrule
$\dt E_7+\dt A_1$ & $z_{21}=u_{13}u_{24}u_{34}u_{45}u_{345}u_{56}u_7u_8$ & $u_3$ \mbox{$\leadsto u_1u_3u_1u_2u_4u_2u_3u_5u_4u_6u_5u_6u_7u_8$ (\ref{LmBwB})} \\
\midrule
\multirow{2}{*}{$\dt E_7$} & \multirow{2}{*}{$z_{30}=u_1u_{234}u_{345}u_{245}u_6u_7u_8$} & $[8765423413542654786576314532]$ \mbox{$\leadsto u_6u_7u_5u_2u_4u_3u_1$ (\ref{LmBwB})} \\
\midrule
$\dt D_8$ & $z_{36}=u_{13}u_{24}u_{34}u_{45}u_{345}u_{56}u_{67}u_{78}$ & $u_3$ $\leadsto$ 
$u_1u_3u_1u_2u_4u_2u_3u_5u_4u_6u_5u_7u_6u_8u_7u_8$ \\
\midrule
\multirow{2}{*}{$\dt E_7(a_1)$} & \multirow{2}{*}{$z_{42}=u_1u_{234}u_{345}u_{245}u_{2456}u_{67}u_8$} & $[8765432451364257645831764523]$ $\leadsto~u_6u_7u_5u_4u_2u_3u_4u_3u_1$ \\
\midrule
\multirow{2}{*}{$\dt D_7$} & \multirow{2}{*}{$z_{57}=u_1u_{234}u_{345}u_{245}u_{456}u_{567}u_{678}$} & $[134256453176451876245134567813245678$ \\
& & $734621345875678465]$ $\leadsto~u_5u_4u_6u_8u_3u_2u_7$ \\
\midrule
\multirow{2}{*}{$\dt E_7(a_2)$} & \multirow{2}{*}{$z_{59}=u_{1234}u_{1345}u_{245}u_{3456}u_{2456}u_7u_8$} & $u_2\cdot[6756876542786731435465786724315643$ \\
& & $152]\leadsto~u_7u_6u_5u_4u_5u_6u_5u_2u_4u_3u_1$ \\
\midrule
$\dt A_8$ & $z_{61}=u_{134}u_{1234}u_{1345}u_{2345}u_{456}u_{123456}u_{67}u_{78}$ & $z_{61}$ unique with $F|_{A_\mf G(z_{61})}=\id$ (\ref{CGFu0}) \\
\midrule
\multirow{2}{*}{$\dt E_6+\dt A_1$} & \multirow{2}{*}{$z_{65}=u_{1234}u_{1345}u_{234^25}u_{3456}u_{2456}u_7u_8$} & $[786756453413245314324654324786754361$ \\
& & $3524]\leadsto~u_6u_1u_8u_5u_3u_4u_2$ \\
\midrule
\multirow{2}{*}{$(\dt D_7(a_1))_2$} & \multirow{2}{*}{$z_{69}=u_{134}u_{2345}u_{3456}u_{2456}u_{567}u_{4567}u_{78}$} & $[131425431654765348765624315423456785$ \\
& & $6784567467]\leadsto~u_5u_3u_4u_3u_8u_6u_2u_4u_7$ \\
\midrule
\multirow{2}{*}{$\dt D_8(a_3)$} & $z_{77}=u_{1234}u_{1345}u_{2345}u_{3456}u_{2456}u_{4567}u_{78}u_{678}$ & \multirow{2}{*}{$z_{77}$ unique with $F|_{A_\mf G(z_{77})}=\id$ (\ref{CGFu0})}  \\
& only when $q\equiv1\mod3$ (see \ref{p2S3}(b)) &  \\
\midrule
$\dt D_6+\dt A_1$ & $z_{85}=u_{13}u_{234^25}u_{3456}u_{2456}u_{4567}u_{34567}u_{78}$ & $u_2\cdot[4587316542345654134256342765431654$ \\
& & $2543245341342875367]$ \\
& & $\leadsto~u_6u_7u_4u_3u_4u_2u_5u_4u_3u_2u_4u_5u_1$ \\
\midrule
$\dt E_6(a_1)+\dt A_1$ & $z_{90}=u_{1234}u_{1345}u_{234^25}u_{3456}u_{24567}u_8u_{78}$ & see \ref{CGFu0} \\
\midrule
\multirow{2}{*}{$\dt E_6$} & \multirow{2}{*}{$z_{73}=u_{1234}u_{1345}u_{3456}u_{2456}u_7u_8$} & $[765423143876542314534678567432454631]$ \\
& &  $\leadsto$ $u_1u_6u_3u_5u_4u_2$ \\
\midrule
\multirow{2}{*}{$\dt D_6$} & \multirow{2}{*}{$z_{93}=u_{1}u_{234^25}u_{23456}u_{34567}u_{24567}u_{8}$} & $[134254675631453487654312435642543145$ \\
& & $7865467342]$ $\leadsto$ $u_4u_3u_2u_5u_7u_6$ \\
\midrule
$\dt D_7(a_2)$ & $z_{95}=u_{1234}u_{1345}u_{234^25}u_{23456}u_{4567}u_{24567}u_{678}$ & see \ref{CGFu0} \\
\midrule
$\dt E_6(a_1)$ & $z_{98}=u_{1234}u_{1345}u_{3456}u_{24567}u_{8}u_{78}$ & see \ref{CGFu0} \\
\midrule
\multirow{2}{*}{$(\dt D_5+\dt A_2)_2$} & \multirow{2}{*}{$z_{101}=u_{12345}u_{234^25}u_{13456}u_{23456}u_{34567}u_{24567}u_{78}$} & $[678567456341354243154365424563451347$ \\
& & $8657642654634527]\leadsto u_7u_5u_4u_1u_8u_2u_3$ \\
\midrule
\multirow{2}{*}{$\dt D_6(a_1)$} & \multirow{2}{*}{$z_{108}=u_{234^25}u_{23456}u_{123456}u_{24567}u_{134567}u_8$} & $[134287654316543245678657645624342546$ \\
& & $57643453134654562]$ $\leadsto$ $u_3u_4u_3u_2u_4u_7u_5u_6$\\
\midrule
$\dt A_6$ & $z_{113}=u_{1234}u_{1345}u_{23456}u_{34567}u_{24567}u_{45678}$ & see \ref{CGFu0} \\
\midrule
\multirow{3}{*}{$\dt D_5+\dt A_1$} & \multirow{3}{*}{$z_{115}=u_{1234^25}u_{123456}u_{234^25^26}u_{34567}u_{24567}u_{8}$} & $[675645341324531432487654324543241342$ \\
& & $654786543145674531435643143656134245]$ \\
& &$\leadsto u_5u_2u_7u_4u_1u_3$ \\
\midrule
$2\dt A_4$ & $z_{117}=u_{12345}u_{23456}u_{234^256}u_{1234^256}u_{34567}u_{1234^2567}u_{123^24^2567}u_{45678}$ & $z_{117}$ unique with $F|_{A_\mf G(z_{117})}=\id$ (\ref{CGFu0})  \\
\midrule
$\dt A_5+\dt A_2$ & $z_{118}=u_{1234^25}u_{123^24^25}u_{23456}u_{24567}u_{134567}u_{45678}u_{245678}$ & $z_{118}$ unique with $F|_{A_\mf G(z_{118})}=\id$ \\
\midrule
\multirow{3}{*}{$\dt A_5+2\dt A_1$} & \multirow{3}{*}{$z_{122}=u_{1234^25}u_{123^24^25}u_{13456}u_{123456}u_{234^25^26}u_{234567}u_{45678}$} & $u_5\cdot [5786756431543245314523454678567456$ \\
& & $34513524531]$ \\ 
& & $\leadsto~u_6u_5u_3u_1u_8u_4u_2u_4u_3u_4u_2u_5u_4$ \\
\midrule
$\dt D_6(a_2)$ & $z_{125}=u_{1234^25}u_{23456}u_{24567}u_{134567}u_{345678}u_{1345678}$ & see \ref{CGFu0} \\
\midrule
$\dt D_5$ & $z_{128}=u_{34567}u_{24567}u_{1234^25}u_{123456}u_{8}$ & $[654231437654231453487654312435642547$ \\
& & $8675453421]\leadsto u_4u_1u_2u_5u_3$ \\
\midrule
\multirow{2}{*}{$(\dt A_5+\dt A_1)''$} & \multirow{2}{*}{$z_{132}=u_{1234^25}u_{13456}u_{123^24^25}u_{123456}u_{234567}u_{45678}$} & $u_3\cdot[3765431243876542345134678567425431]$ \\
& & $\leadsto u_1u_3u_5u_6u_4u_5u_2u_4u_3u_4u_2u_4$ \\
\midrule
\multirow{2}{*}{$(\dt D_4+\dt A_2)_2$} & \multirow{2}{*}{$z_{135}=u_{123^24^25}u_{1234^256}u_{234^25^26}u_{1234567}u_{234^2567}u_{5678}$} & $[65786713456245342134564237865426534$ \\
& & $7651342]\leadsto u_5u_2u_8u_7u_3u_4$ \\
\midrule
$\dt A_4+2\dt A_1$ & $z_{149}=u_{123^24^256}u_{1234^25^26}u_{1234^2567}u_{234^25^267}u_{245678}u_{1345678}$ & see \ref{CGFu0} \\
\midrule
$\dt D_5(a_1)$ & $z_{144}=u_{123^24^256}u_{1234^25^26}u_{1234^25^267}u_{234^25^26^27}u_{78}$ & see \ref{CGFu0} \\
\midrule
$\dt A_4+\dt A_1$ & $z_{154}=u_{123^24^256}u_{1234^25^267}u_{234^25^26^27}u_{345678}u_{245678}$ & see \ref{CGFu0} \\
\midrule
\multirow{2}{*}{$\dt D_4+\dt A_1$} & \multirow{2}{*}{$z_{147}=u_{123^24^2567}u_{1234^25^267}u_{234^25^26^27}u_{12^23^24^35^26}u_8$} & $[675613452431876542345643786541324565$ \\
& & $3241765342]\leadsto u_5u_2u_3u_7u_4$ \\
\midrule
$\dt D_4(a_1)+\dt A_2$ & $z_{161}=u_{123^24^256}u_{1234^2567}u_{1234^25^267}u_{234^25^26^27}u_{12345678}u_{234^25^2678}$ & see \ref{CGFu0} \\
\midrule
$\dt A_4$ & $z_{158}=u_{123^24^256}u_{1234^25^267}u_{345678}u_{245678}$ & see \ref{CGFu0} \\
\midrule
$\dt D_4(a_1)+\dt A_1$ & $z_{168}=u_{12^23^24^35^26}u_{123^24^2567}u_{1234^25^267}u_{1234^25^2678}u_{234^25^26^278}$ & $z_{168}$ unique with $F|_{A_\mf G(z_{168})}=\id$ \\
\midrule
\multirow{2}{*}{$\dt D_4$} & \multirow{2}{*}{$z_{152}=u_{123^24^2567}u_{1234^25^267}u_{234^25^26^27}u_8$} & $[651342768754316543245678436542134567]$ \\
& & $\leadsto u_3u_2u_5u_4$ \\
\midrule
$\dt D_4(a_1)$ & $z_{173}=u_{123^24^2567}u_{1234^25^267}u_{1234^25^2678}u_{234^25^26^278}$ & $z_{173}$ unique with $F|_{A_\mf G(z_{173})}=\id$ \\
\midrule
$2\dt A_2$ & $z_{181}=u_{12^23^24^35^267}u_{123^24^35^26^27}u_{123^24^25^2678}u_{1234^25^26^278}$ & see \ref{CGFu0} \\
\midrule
$\dt A_2+\dt A_1$ & $z_{186}=u_{1^22^23^34^45^36^27}u_{123^24^35^36^278}u_{12^23^24^35^26^27^28}$ & see \ref{CGFu0} \\
\midrule
$\dt A_2$ & $z_{190}=u_{123^24^35^36^278}u_{12^23^24^35^26^27^28}$ & see \ref{CGFu0} \\
\bottomrule
\end{longtable}
\end{landscape}

\section{Groups of type \texorpdfstring{$\leftidx{^2}{\dt E_6}$}{2E6}}\label{SectwE6}

From now until the end of the paper, we denote by $\mf G$ the simple adjoint group of type $\dt E_6$ over $k=\overline{\mathbb F}_2$ and by $F\colon\mf G\rightarrow\mf G$ a Frobenius map for a twisted $\Fq$-rational structure on $\mf G$ where $q$ is a power of $p=2$, so that $\mf G^F=\leftidx{^2}{\dt E_6}(q)$. The further notation is analogous to the one in the introduction: In particular, we fix an $F$-stable Borel subgroup $\mf B_0$ of $\mf G$ and an $F$-stable maximal torus $\mf T_0$ of $\mf G$ such that $\mf T_0\subseteq\mf B_0$, and then $\mf W=N_\mf G(\mf T_0)/\mf T_0$, $\Phi\supseteq\Phi^+\supseteq\Pi$, $S\subseteq\mf W$ and $\mf U_0$ are assumed to be with respect to the choice of $\mf T_0\subseteq\mf B_0$. Let $\sigma\colon\mf W\xrightarrow{\sim}\mf W$ be the automorphism induced by $F$. Then $\sigma$ is naturally induced from an automorphism of the root system $\Phi$ which stabilises $\Pi$. We have $\sigma(w)=w_0ww_0^{-1}$ for $w\in\mf W$, where $w_0\in\mf W$ is the longest element of $(\mf W,S)$. We label the simple roots $\Pi=\{\alpha_1,\alpha_2,\ldots,\alpha_6\}$ in such a way that the Dynkin diagram of $\mf G$ (together with the permutation of $\Pi$ induced by $F$, as indicated by the arrows) is given as follows:
\begin{center}
\begin{tikzpicture}
    \draw (-0.75,-0.25) node[anchor=east]  {$\leftidx{^2}{\dt E}_6$};

    \node[bnode,label=below:$\alpha_1$] 		(1) at (0,0) 	{};
    \node[bnode,label=right:$\alpha_2$] 		(2) at (2.5,-1) 	{};
    \node[bnode,label=below:$\alpha_3$] 		(3) at (1.25,0) 	{};
    \node[bnode,label=below left:$\alpha_4$] 	(4) at (2.5,0) 	{};
    \node[bnode,label=below:$\alpha_5$] 		(5) at (3.75,0) 	{};
    \node[bnode,label=below:$\alpha_6$] 		(6) at (5,0) 	{};

    \path 	(1) edge[thick, sedge] (3)
          	(3) edge[thick, sedge] (4)
          	(4) edge[thick, sedge] (5)
			(4)	edge[thick, sedge] (2)
          	(5) edge[thick, sedge] (6)
          	(1) edge[->,>=stealth',bend left] node [right] {} (6)
            (6) edge[->,>=stealth',bend right] node [left] {} (1)
          	(3) edge[->,>=stealth',bend left] node [right] {} (5)
            (5) edge[->,>=stealth',bend right] node [left] {} (3);
          ;
\end{tikzpicture}
\end{center}
To compute the values of unipotent characters at unipotent elements in $\mf G^F$, we would like to argue similarly as for groups of type $\dt E_8$ (cf.\ also \cite[4.1.23--4.1.27]{HDiss}, where the non-twisted groups $\dt E_6(q)$ have been considered), but the twisted $\Fq$-structure on $\mf G$ forces us to modify several arguments.

\begin{0}\label{2E6Setup}
In order to describe the parametrisation of the unipotent (almost) characters according to Lusztig \cite{Luchars}, we need to slightly expand the setting of \Cref{E8Setup}; we use a similar notation as in \cite[4.1.2, 4.1.5]{HDiss}. The parametrisation of the unipotent characters via \cite[Main Theorem 4.23]{Luchars} is now given in terms of a set $\overline X(\mf W,\sigma)$, which may be identified with the parameter set $X(\mf W)$ for the unipotent characters of the non-twisted group $\dt E_6(q)$ (cf.\ \Cref{E8Setup}); let us denote this identification by
\[X(\mf W)\xrightarrow{\sim}\overline X(\mf W,\sigma),\fsep x\mapsto\overline x.\]
The group $\mf G^F$ has $30$ unipotent characters (see \cite[1.10--1.16]{LuUniexc}). For $\rho\in\Uch(\mf G^F)$, let $x_\rho\in X(\mf W)$ be such that $\overline x_\rho\in\overline X(\mf W,\sigma)$ parametrises $\rho$. As for the unipotent almost characters, we need to work with a larger set than $X(\mf W)$, which can be identified with $X(\mf W)\times\{\pm1\}$ (see \cite[4.19]{Luchars}). The unipotent characters and unipotent almost characters are then related by a pairing $\overline X(\mf W,\sigma)\times(X(\mf W)\times\{\pm1\})\rightarrow\Qlbar$, given by
\[\{\overline x,(y,\delta)\}=\delta\{x,y\}\quad\text{for }x,y\in X(\mf W),\,\delta\in\{\pm1\},\]
where $\{x,y\}$ on the right hand side is described by the Fourier matrix with respect to $X(\mf W)$ that we already referred to in \Cref{E8Setup}. For $z\in X(\mf W)\times\{\pm1\}$, the unipotent almost character $R_z$ is defined by
\[R_z=\sum_{\rho\in\Uch(\mf G^F)}\Delta(\overline x_\rho)\{\overline x_\rho,z\}\rho,\]
where $\Delta\colon\overline X(\mf W,\sigma)\rightarrow\{\pm1\}$ is as in \cite[4.21]{Luchars}. Taking any fixed subset $X_0(\mf W,\sigma)\subseteq X(\mf W)\times\{\pm1\}$ of representatives of the fibres of the projection map $X(\mf W)\times\{\pm1\}\rightarrow X(\mf W)$, a unipotent character $\rho\in\Uch(\mf G^F)$ is then expressed as
\[\rho=\Delta(\overline x_\rho)\sum_{z\in X_0(\mf W,\sigma)}\{\overline x_\rho,z\}R_z.\]
In particular, the computation of the unipotent characters (at unipotent elements) is equivalent to the one of the almost characters $R_z$ (at unipotent elements) for $z\in X_0(\mf W,\sigma)$.
\end{0}

\begin{0}\label{Green2E6}
We would like to follow a strategy similar to the one described in \ref{Method}.
Let $\mf W(\sigma):=\mf W\rtimes\langle\sigma\rangle$ be the semidirect product of $\mf W$ with the cyclic group of order $2$ generated by $\sigma$, where $\sigma\cdot w\cdot\sigma^{-1}=\sigma(w)$ in $\mf W(\sigma)$ (for $w\in\mf W$). Since $\sigma$ is an inner automorphism of $\mf W$, it leaves any character of $\mf W$ invariant, and therefore any $\phi\in\Irr(\mf W)$ can be extended to an irreducible character of $\mf W(\sigma)$, in two different ways. In \cite[\S 17.2]{LuCS4}, Lusztig singles out a \enquote{preferred extension} of $\phi\in\Irr(\mf W)$, which we shall from now on always denote by $\tilde\phi\colon\mf W(\sigma)\rightarrow\Qlbar$. It is given by
\[\tilde\phi\colon\mf W(\sigma)\rightarrow\Qlbar,\fsep w\sigma^i\mapsto{(-1)}^{i\cdot a_\phi}\phi(ww_0^i)\quad\text{(for }w\in\mf W,\;i=0,1),\]
where $a_\phi\in\N_0$ is as defined in \cite[4.1]{Luchars}. Let $\Irr(\mf W(\sigma))_{\mathrm{ex}}$ be the set of all irreducible characters of $\mf W(\sigma)$ whose restrictions to $\mf W$ are in $\Irr(\mf W)$. As discussed above, we have $\Irr(\mf W(\sigma))_{\mathrm{ex}}=\{\pm\tilde\phi\mid\phi\in\Irr(\mf W)\}$. Recall from \ref{E8Setup} that there is an embedding $\Irr(\mf W)\hookrightarrow X(\mf W)$, $\phi\mapsto x_\phi$. This allows the definition of an embedding
\[\Irr(\mf W(\sigma))_{\mathrm{ex}}\hookrightarrow X(\mf W)\times\{\pm1\},\fsep\delta\tilde\phi\mapsto(x_\phi,\delta)\quad\text{(for }\phi\in\Irr(\mf W),\;\delta\in\{\pm1\}),\]
see \cite[4.19]{Luchars}. The purpose of this procedure is that, given $\phi\in\Irr(\mf W)$, there is a more direct definition of an almost character $R_{\tilde\phi}$ (see \cite[(3.7.1)]{Luchars}): In our situation, this definition reads
\[R_{\tilde\phi}=\frac{1}{|\mf W|}\sum_{w\in\mf W}\tilde\phi(w\sigma)R_w,\]
where $R_w=R_{\mf T_w}^{\mf G}(1)$ is the Deligne--Lusztig character \cite{DL} associated to the maximal torus $\mf T_w\subseteq\mf G$ obtained from $\mf T_0$ by twisting with $w\in\mf W$ (and to the trivial character of $\mf T_w^F$). By \cite[4.23]{Luchars} and with the notation of \ref{2E6Setup}, we then have $R_{\tilde\phi}=R_{(x_\phi,1)}$. The computation of the values at unipotent elements of these $25$ almost characters $R_{\tilde\phi}$ for $\phi\in\Irr(\mf W)$ is equivalent to the determination of the (ordinary) Green functions and has been carried out already by Malle (see \cite[Thm.~2]{MGreenE6F4}). It remains to deal with the $5$ almost characters which do not arise from an element of $\Irr(\mf W(\sigma))_{\mathrm{ex}}$. Now note that the isomorphism classes of unipotent character sheaves on $\mf G$ are parametrised by $X(\mf W)$ (cf.\ \ref{NG}), and it is still true that a characteristic function of any unipotent character sheaf $A_x$ ($x\in X(\mf W)$) coincides with the associated almost characters $R_{(x,\pm1)}$ up to multiplication with a non-zero scalar (see \cite[Thm.~4.1]{Sh2}). This includes the two cuspidal character sheaves, whose support is given by a non-unipotent conjugacy class of $\mf G$ (see the proof of \cite[Prop.~20.3(a)]{LuCS4}), so we already know that the corresponding two almost characters will be identically $0$ on $\mf G^F_{\mathrm{uni}}$. Hence, there only remain $3$ unipotent almost characters whose values at unipotent elements of $\mf G^F$ are not yet known.
\end{0}

\begin{0}\label{IntermedTwist}
Note that the algorithm in \cite[\S 24]{LuCS5} is also applicable for twisted groups (and the generalised Springer correspondence does not even depend on the $\Fq$-structure on $\mf G$). Thus, the general part of the discussion in \ref{Intermed} transfers almost verbatim to our situation at hand, with one difference: If $x\in X(\mf W)$ corresponds to $\ii\in\cN_\mf G$ and if, for $z=(x,\delta)\in X(\mf W)\times\{\pm1\}$, $\xi_z\in\Qlbar$ is such that $R_z=\xi_z\chi_x^0$, we can no longer say that $\xi_z$ only depends on $\tau(\ii)\in\cM_\mf G$ (as $\chi_x^0$ only depends on $x$, while $R_{(x,-1)}=-R_{(x,1)}$). However, for any $z=(x,\delta)$ such that the $\ii\in\cN_\mf G$ corresponding to $x$ gives rise to a fixed element $\jj=\tau(\ii)\in\cM_\mf G$, there exists a sign $\epsilon_z\in\{\pm1\}$ such that $\zeta_\jj:=\epsilon_z\xi_z$ only depends on $\jj$ (see \cite[Proposition 4.4]{Sh2}), and we have $\epsilon_{(x,-1)}=-\epsilon_{(x,1)}$. From the results of \cite{Sp}, it follows that the $3$ \enquote{intermediate} $x\in X(\mf W)$ which we need to consider correspond to the pairs $(\dt E_6,-1)$, $(\dt D_5,-1)$, $(\dt D_4,-1)$ of $\cN_\mf G$ under the generalised Springer correspondence, and for any such pair $\ii$ we have $\tau(\ii)=\jj=(\mf L,\cO_0,\cE_0)\in\cM_\mf G$ where $(\cO_0,\cE_0)$ is the unique cuspidal pair on a Levi subgroup $\mf L\subseteq\mf G$ of type $\dt D_4$. (We will henceforth usually just write $(\dt D_4,\cO_0,\cE_0):=(\mf L,\cO_0,\cE_0)$.) Hence, if $z=(x,\delta)$ where $x\in X(\mf W)$ corresponds to one of the three pairs $\ii=(\cO,\cE)\in\{(\dt E_6,-1),(\dt D_5,-1),(\dt D_4,-1)\}$ above and using a notation analogous to the one in \ref{Intermed}, we have
\begin{align*}
R_z(u)&=\zeta_\jj\epsilon_z\chi_x^0(u)=\zeta_\jj\epsilon_zq^{(\dim\mf G-\dim\cO-\dim\mf Z(\mf L))/2}X_\ii(u) \\
&=\zeta_\jj\epsilon_zq^{(\dim\mf G-\dim\cO-\dim\mf Z(\mf L))/2}\sum_{\ii'\in\tau^{-1}(\jj)}p_{\ii',\ii}\gamma_{\ii'}Y_{\ii'}^0(u)
\end{align*}
for any $u\in\mf G^F_{\mathrm{uni}}$, so the determination of $\zeta_{\jj}\epsilon_z\gamma_{\ii'}$ for the three elements $\ii'\in\tau^{-1}(\jj)$ would yield the values of $R_z|_{\mf G^F_{\mathrm{uni}}}$.
\end{0}

\begin{0}\label{ReeTwist}
A formula such as \eqref{HeckeFormula} in \ref{Ree} also exists for the present situation, but we need to take the twisted $\Fq$-rational structure on $\mf G$ into account. Namely, the group $\mf W^\sigma$ is a Weyl group associated to a root system of type $\dt F_4$ which is naturally induced from the one of type $\dt E_6$. This may be pictured as follows (see \cite[Ex.~4.3]{GHF4conv} or \cite[4.1.5]{HDiss}):
\begin{center}
\begin{tikzpicture}
    \draw (-0.75,-0.25) node[anchor=east]  {$\leftidx{^2}{\dt E}_6$};

    \node[bnode,label=below:$\alpha_1$] 		(1) at (0,0) 	{};
    \node[bnode,label=right:$\alpha_2$] 		(2) at (2.5,-1) 	{};
    \node[bnode,label=below:$\alpha_3$] 		(3) at (1.25,0) 	{};
    \node[bnode,label=below left:$\alpha_4$] 	(4) at (2.5,0) 	{};
    \node[bnode,label=below:$\alpha_5$] 		(5) at (3.75,0) 	{};
    \node[bnode,label=below:$\alpha_6$] 		(6) at (5,0) 	{};

    \path 	(1) edge[thick, sedge] (3)
          	(3) edge[thick, sedge] (4)
          	(4) edge[thick, sedge] (5)
			(4)	edge[thick, sedge] (2)
          	(5) edge[thick, sedge] (6)
          	(1) edge[->,>=stealth',bend left] node [right] {} (6)
            (6) edge[->,>=stealth',bend right] node [left] {} (1)
          	(3) edge[->,>=stealth',bend left] node [right] {} (5)
            (5) edge[->,>=stealth',bend right] node [left] {} (3);
          	
    \draw (6.5,-0.25) node[anchor=east]  {$\leadsto$};
    
    \draw (7.5,-0.25) node[anchor=east]  {$\dt F_4$};

    \node[bnode,label=above:$\alpha_2$,label=below:$\overline s_2$]	(9) at (8,-0.25) {};
    \node[bnode,label=above:$\alpha_4$,label=below:$\overline s_4$] (10) at (9.25,-0.25) {};
    \node[bnode,label=above:$\frac{\alpha_3+\alpha_5}2$,label=below:$\overline s_{35}$] (11) at (10.5,-0.25) {};
    \node[bnode,label=above:$\frac{\alpha_1+\alpha_6}2$,label=below:$\overline s_{16}$] (12) at (11.75,-0.25) {};

    \path (9) edge[sedge] (10)
          (10) edge[dedge] (11)
          (11) edge[sedge] (12)
          ;
\end{tikzpicture}
\end{center}
Here, the labels $\overline s_2, \overline s_4, \overline s_{35}, \overline s_{16}$ on the bottom of the diagram on the right denote the reflections of $\mf W^\sigma$ corresponding to the simple roots $\alpha_2, \alpha_4, \frac{\alpha_3+\alpha_5}2, \frac{\alpha_1+\alpha_6}2$, respectively. We have $\overline s_2=s_2$, $\overline s_4=s_4$, $\overline s_{35}=s_3s_5$, $\overline s_{16}=s_1s_6$, and the set $S_\sigma:=\{\overline s_2, \overline s_4, \overline s_{35}, \overline s_{16}\}\subseteq\mf W^\sigma$ generates $\mf W^\sigma$ as a Coxeter group. Applying the procedure outlined in \Cref{Ree} to $(\mf W^\sigma,S_\sigma)$, we obtain an Iwahori--Hecke algebra $\cH_{\sigma,q}$ with a standard basis $T_w$, $w\in\mf W^\sigma$, whose multiplication is determined by the following equations, where $w\in\mf W^\sigma$, $s\in S_\sigma$, and $\length_\sigma\colon\mf W^\sigma\rightarrow\N_0$ is the length function of $\mf W^\sigma$ with respect to $S_\sigma$:
\[T_w\cdot T_{s}=\begin{cases}\hfil T_{ws}&\text{if}\quad\length_\sigma(ws)=\length_\sigma(w)+1, \\ qT_{ws}+(q-1)T_{w}&\text{if}\quad\length_\sigma(ws)=\length_\sigma(w)-1\text{ and }s\in\{\overline s_2,\overline s_4\}, \\
q^2T_{ws}+(q^2-1)T_{w}&\text{if}\quad\length_\sigma(ws)=\length_\sigma(w)-1\text{ and }s\in\{\overline s_{35},\overline s_{16}\}.
\end{cases}\]
Each $\psi\in\Irr(\mf W^\sigma)$ parametrises an irreducible character $\psi_q$ of $\cH_{\sigma,q}$. Then, denoting by $O_u\subseteq\mf G^F_{\mathrm{uni}}$ the $\mf G^F$-conjugacy class of $u\in\mf G^F_{\mathrm{uni}}$, we have
\begin{equation*}
\frac{|\mf B_0^Fw\mf B_0^F\cap O_u|\cdot|C_{\mf G^F}(u)|}{|\mf B_0^F|}=\sum_{\psi\in \Irr(\mf W^\sigma)} 
\psi_q(T_w)\cdot[\psi](u)\qquad \text{for } u\in\mf G^F_{\mathrm{uni}},\,w\in\mf W^\sigma.
\end{equation*}
(Here, $[\psi]\in\Uch(\mf G^F)$ is the principal series unipotent character parametrised by $\psi\in\Irr(\mf W^\sigma)$.) Let $X_0(\mf W,\sigma)\subseteq X(\mf W)\times\{\pm1\}$ be as in \ref{2E6Setup} and, for $u\in\mf G^F_{\mathrm{uni}}$, $w\in\mf W^\sigma$, let
\[m(u,w):=\sum_{z\in X_0(\mf W,\sigma)}c_z(w)R_z(u),\text{ with}\quad c_z(w):=\sum_{\psi\in\Irr(\mf W^\sigma)}\Delta(\overline x_{[\psi]})\{\overline x_{[\psi]},z\}\psi_q(T_w).\]
This is a generalisation of the definition of the $m(u,w)$ in \ref{Ree} in order to have
\begin{equation}\label{HeckeFormula2E6}
m(u,w)=\sum_{z\in X_0(\mf W,\sigma)}c_z(w)R_z(u)=\frac{|\mf B_0^Fw\mf B_0^F\cap O_u|\cdot|C_{\mf G^F}(u)|}{|\mf B_0^F|}\;(u\in\mf G^F_{\mathrm{uni}},\;w\in\mf W^\sigma).
\tag{${\spadesuit}$}
\end{equation}
(In particular, note that $m(u,w)$ does not depend on the choice of $X_0(\mf W,\sigma)$.) Again, the values $\psi_q(T_w)$ can be accessed through {\sffamily CHEVIE} \cite{MiChv}.
\end{0}

The following lemma is the first step towards determining the roots of unity $\zeta_{\jj}\epsilon_z\gamma_{\ii'}$ which appear in \ref{IntermedTwist}.

\begin{Lm}\label{epsTwist}
In the set-up and with the notation of \ref{IntermedTwist}, let us write $\epsilon_\ii:=\epsilon_{(x,1)}$ if $\ii\in\cN_\mf G$ corresponds to $x\in X(\mf W)$. We then have 
\[\epsilon_{(\dt D_4,-1)}=\epsilon_{(\dt D_5,-1)}=-\epsilon_{(\dt E_6,-1)}\in\{\pm1\}.\]
\end{Lm}

\begin{proof}
Let us take $X_0(\mf W,\sigma):=X(\mf W)\times\{1\}$ for the set of representatives of the fibres of the projection map $X(\mf W)\times\{\pm1\}\rightarrow X(\mf W)$ (see \ref{2E6Setup}). As discussed in \ref{IntermedTwist}, we have $\jj:=\tau(\ii)=(\dt D_4,\cO_0,\cE_0)$ for any $\ii\in\{(\dt E_6,-1),(\dt D_5,-1),(\dt D_4,-1)\}$, so the roots of unity $\zeta_\jj\epsilon_z\gamma_{\ii'}$ for the corresponding three $z=(x,1)\in X_0(\mf W,\sigma)$ (and any fixed $\ii'\in\cN_\mf G$) differ only by the signs $\epsilon_z$. Assume, if possible, that $\epsilon_{(\dt D_5,-1)}=\epsilon_{(\dt E_6,-1)}$. Then, taking $w$ to be a Coxeter element of $(\mf W^\sigma,S_\sigma)$ and evaluating the above three $R_z$ at elements of $\dt D_5^F$, we compute the sum in the formula \eqref{HeckeFormula2E6} in \ref{ReeTwist} and obtain
\[0\leqslant m(u,w)=-2q^7\zeta_\jj\epsilon_{(\dt E_6,-1)}\gamma_{(\dt D_5,-1)}Y_{(\dt D_5,-1)}^0(u)\quad\text{for any }u\in\dt D_5^F.\]
However, by the definition of $Y_{(\dt D_5,-1)}^0$ (and since $A_\mf G(u)\cong\Z/2\Z$ for $u\in\dt D_5^F$, with $F$ acting trivially on $A_\mf G(u)$), we have $Y_{(\dt D_5,-1)}^0(u)=-Y_{(\dt D_5,-1)}^0(u')\neq0$ if $u,u'\in\dt D_5^F$ are not $\mf G^F$-conjugate, contradicting the above inequality for any $u\in\dt D_5^F$. So we must have $\epsilon_{(\dt D_5,-1)}=-\epsilon_{(\dt E_6,-1)}$. Similarly, if we assume that $\epsilon_{(\dt D_4,-1)}=\epsilon_{(\dt E_6,-1)}$, we can take $w:=\overline s_2\cdot\overline s_4\cdot\overline s_{35}\cdot\overline s_{16}\cdot\overline s_4\cdot\overline s_{35}\in\mf W^\sigma$ and get
\[0\leqslant m(u,w)=2q^{11}\zeta_\jj\epsilon_{(\dt E_6,-1)}\gamma_{(\dt D_4,-1)}Y_{(\dt D_4,-1)}^0(u)\quad\text{for any }u\in\dt D_4^F,\]
which contradicts the fact that $Y_{(\dt D_4,-1)}^0(u)=-Y_{(\dt D_4,-1)}^0(u')\neq0$ whenever $u,u'\in\dt D_4^F$ are not $\mf G^F$-conjugate. This proves that $\epsilon_{(\dt D_4,-1)}=-\epsilon_{(\dt E_6,-1)}$.
\end{proof}


Thanks to \Cref{epsTwist}, we know the products $\zeta_\jj\epsilon_\ii$ for $\jj=(\dt D_4,\cO_0,\cE_0)$ and the three $\ii\in\tau^{-1}(\jj)\subseteq\cN_\mf G$ up to a global sign. In order to obtain the necessary information on the roots of unity $\gamma_{\ii'}$ for $\ii'\in\tau^{-1}(\jj)$ (see \ref{IntermedTwist}), we will once again evaluate $m(u,w)$, with $u$ a unipotent element in one of the three \enquote{critical} unipotent classes $\dt E_6, \dt D_5, \dt D_4$, and with an appropriately chosen $w\in\mf W^\sigma$ depending on $u$.

As in \Cref{Repgood}, we want to be able to single out good elements (cf.\ \Cref{Defgood}) among a concrete list of representatives for the unipotent conjugacy classes of $\mf G^F$ (as given for example in \cite[Table~9]{MGreenE6F4}). To achieve this, we shall need the following lemma, which (just as \Cref{LmBwB}) is a special case of \cite[Lemma~3.2.24]{HDiss}. Note that $w_0(\Pi)=-\Pi$, so $-w_0$ defines a permutation on the simple roots $\Pi$, which corresponds to the non-trivial graph automorphism of the Dynkin diagram of $(\mf G,F)$ as pictured in the introduction to this section.

\begin{Lm}[see {\cite[Lemma~3.2.24]{HDiss}}; cf.\ \Cref{LmBwB}]\label{LmBwBTwist}
Let $w\in\mf W^\sigma\subseteq\mf W$ be an element of length $e\in\N_0$ in $(\mf W,S)$, and let $w=s_{i_1}\cdot s_{i_2}\cdot\ldots\cdot s_{i_e}$ be a reduced expression for $w$ (where $s_{i_j}\in S$ is the reflection with respect to the simple root $\alpha_{i_j}$, $1\leqslant i_j\leqslant6$). Then
\[u_0:=u_{-w_0(\alpha_{i_1})}(1)\cdot u_{-w_0(\alpha_{i_2})}(1)\cdot\ldots\cdot u_{-w_0(\alpha_{i_e})}(1)\in\mf U_0^F\]
is $\mf G^F$-conjugate to an element of $\mf B_0^Fw\mf B_0^F$.
\end{Lm}

\begin{proof}
See \cite[3.2.24]{HDiss}.
\end{proof}

\begin{0}{\bf The regular unipotent class $\dt E_6$.}
We have $A_\mf G(u)\cong\Z/2\Z$ for any regular unipotent element $u\in\dt E_6^F$, with $F$ acting trivially on $A_\mf G(u)$. Following Malle \cite[Table~9]{MGreenE6F4}, we choose the representative
\[u_0:=u_6(1)u_1(1)u_2(1)u_5(1)u_3(1)u_4(1)\in\dt E_6^F.\]
(This element is denoted by \enquote{$u_{26}$} in \cite{MGreenE6F4}.) Setting $w:=s_1s_6s_2s_3s_5s_4=\overline s_{16}\overline s_2\overline s_{35}\overline s_4\in\mf W^\sigma$ (a Coxeter element of both $(\mf W,S)$ and $(\mf W^\sigma,S_\sigma)$), $u_0$ is $\mf G^F$-conjugate to an element of $\mf B_0^Fw\mf B_0^F$ by \Cref{LmBwBTwist}, so $m(u_0,w)>0$. We now evaluate the sum on the left hand side of the formula \eqref{HeckeFormula2E6} in \ref{ReeTwist} and obtain
\[m(u_0,w)=q^6(1-\zeta_\jj\epsilon_{(\dt E_6,-1)}\gamma_{(\dt E_6,-1)}).\]
As $m(u_0,w)$ is a real number, this first of all forces the root of unity $\zeta_\jj\epsilon_{(\dt E_6,-1)}\gamma_{(\dt E_6,-1)}$ to be either $+1$ or $-1$. But it cannot be $+1$ since $m(u_0,w)>0$, so we must have $\zeta_\jj\epsilon_{(\dt E_6,-1)}\gamma_{(\dt E_6,-1)}=-1$.

\end{0}

\begin{0}{\bf The class $\dt D_5$.}\label{D5twist}
We have $A_\mf G(u)\cong\Z/2\Z$ for $u\in\dt D_5^F$, with $F$ acting trivially on $A_\mf G(u)$. Following \cite[Table~9]{MGreenE6F4}, we pick the representative \[u_0:=u_2(1)u_4(1)u_{\alpha_1+\alpha_3}(1)u_{\alpha_5+\alpha_6}(1)u_{\alpha_3+\alpha_4+\alpha_5}(1)\in\dt D_5^F.\]
(This element is denoted by \enquote{$u_{23}$} in \cite{MGreenE6F4}.) Applying the relations given in \ref{Rel} for groups of type $\dt E_6$ and using the analogous notation (note that $p=2$ here as well), we see that
\[(\omega_3\omega_5u_3u_5)u_0(\omega_3\omega_5u_3u_5)^{-1}=u_2u_{34}u_3u_{45}u_3u_1u_6=u_2u_4u_3u_5u_4u_3u_5u_1u_6.\]
We have $F(u_3)=u_5$, $F(u_5)=u_3$, $F(\omega_3)=\omega_5$, $F(\omega_5)=\omega_3$ and $u_3u_5=u_5u_3$, $\omega_3\omega_5=\omega_5\omega_3$, so $\omega_3\omega_5u_3u_5\in\mf G^F$. Hence, setting $w:=s_2s_4s_5s_3s_4s_5s_3s_6s_1=\overline s_2\overline s_4\overline s_{35}\overline s_4\overline s_{35}\overline s_{16}\in\mf W^\sigma$, \Cref{LmBwBTwist} shows that $u_0$ is $\mf G^F$-conjugate to an element of $\mf B_0^Fw\mf B_0^F$, so $m(u_0,w)>0$. Evaluating the sum on the left hand side of the formula \eqref{HeckeFormula2E6} in \ref{ReeTwist}, we obtain
\[0<m(u_0,w)=q^9(1+\zeta_\jj\epsilon_{(\dt D_5,-1)}\gamma_{(\dt D_5,-1)}),\]
and we conclude that the root of unity $\zeta_\jj\epsilon_{(\dt D_5,-1)}\gamma_{(\dt D_5,-1)}$ must be equal to $+1$.
\end{0}

\begin{0}{\bf The class $\dt D_4$.}
We have $A_\mf G(u)\cong\Z/2\Z$ for $u\in\dt D_4^F$, with $F$ acting trivially on $A_\mf G(u)$. According to Malle \cite[Table~9]{MGreenE6F4}, the element
\[u_2(1)u_{\alpha_4+\alpha_5+\alpha_6}(1)u_{\alpha_1+\alpha_3+\alpha_4}(1)u_{\alpha_3+\alpha_4+\alpha_5}(1)\]
(which is denoted by \enquote{$u_{16}$} in \cite{MGreenE6F4}) lies in $\dt D_4^F$. A similar computation as in \ref{D5twist} shows that conjugating Malle's \enquote{$u_{16}$} by $\omega_6\omega_1\omega_5\omega_3\omega_4\omega_2\omega_5\omega_3\omega_4\in\mf G^F$ yields
\[u_0:=u_4(1)u_3(1)u_5(1)u_2(1)\in\dt D_4^F.\]
By \Cref{LmBwBTwist}, $u_0$ is $\mf G^F$-conjugate to an element of $\mf B_0^Fw\mf B_0^F$ where $w=s_4s_5s_3s_2=\overline s_4\overline s_{35}\overline s_2\in\mf W^\sigma$. Evaluating formula \eqref{HeckeFormula2E6} in \ref{ReeTwist} then gives
\[0<m(u_0,w)=(q^{10}+q^7)(1-\zeta_\jj\epsilon_{(\dt D_4,-1)}\gamma_{(\dt D_4,-1)}),\]
which forces the root of unity $\zeta_\jj\epsilon_{(\dt D_4,-1)}\gamma_{(\dt D_4,-1)}$ to be equal to $-1$.
\end{0}

\begin{0}
Let $\jj=(\dt D_4,\cO_0,\cE_0)\in\cM_\mf G$ and, for $\ii\in\tau^{-1}(\jj)\subseteq\cN_\mf G$ such that $\ii\leftrightarrow x\in X(\mf W)$ under the generalised Springer correspondence, let us write $\epsilon_\ii:=\epsilon_{(x,1)}$ and $R_\ii:=R_{(x,1)}$. Now that we have determined the numbers $\zeta_\jj\epsilon_\ii\gamma_{\ii'}$ for $\ii,\ii'\in\tau^{-1}(\jj)$, we can explicitly compute the values of the three almost characters $R_\ii$ ($\ii\in\tau^{-1}(\jj)$) at unipotent elements in view of the discussion in \ref{IntermedTwist}. The functions $R_\ii|_{\mf G^F_{\mathrm{uni}}}$ are identically zero outside of the classes $\dt D_4,\dt D_5,\dt E_6$, and their values on these classes are given in \Cref{Tbl2E6p2Rx}. As discussed in \ref{Green2E6}, combined with Malle's results \cite[Thm.~2]{MGreenE6F4}, this completes the determination of the values of unipotent characters at unipotent elements for the groups $\leftidx{^2}{\dt E_6}(q)$ where $q$ is a power of $p=2$.
\begin{table}[htbp]
\centering
\begin{tabular}{LCCCCCC}
\toprule
\mf G\text{-conjugacy class:} & \multicolumn{2}{C}{\dt D_4} & \multicolumn{2}{C}{\dt D_5} & \multicolumn{2}{C}{\dt E_6} \\
\cmidrule[0.08em](r){1-1} \cmidrule[0.08em](rl){2-3} \cmidrule[0.08em](rl){4-5} \cmidrule[0.08em](l){6-7}
\mf G^F\text{-class (Malle \cite{MGreenE6F4}):}& u_{16} & u_{17} & u_{23} & u_{24} & u_{26} & u_{27} \\
\midrule
R_{(\dt E_6,-1)} & q^5 & -q^5 & -q^3 & q^3 & -q^2 & q^2  \\
\addlinespace[0.25em]
R_{(\dt D_5,-1)} & q^7-q^6 &  -q^7+q^6 & q^4 & -q^4 & 0 & 0  \\
\addlinespace[0.25em]
R_{(\dt D_4,-1)} & -q^8 & q^8 & 0 & 0 & 0 & 0 \\
\bottomrule
\end{tabular}
\caption{The values of $R_\ii|_{\mf G^F_{\mathrm{uni}}}$ for $\ii\in\tau^{-1}((\dt D_4,\cO_0,\cE_0))$, where $\mf G^F=\leftidx{^2}{\dt E_6}(q)$ with $q$ a power of $p=2$}
\label{Tbl2E6p2Rx}
\end{table}
\end{0}

\begin{Ack}
I thank Meinolf Geck and Frank Lübeck for useful discussions and comments. I also thank Gunter Malle for detailed comments on an earlier version. This work was supported by the Deutsche Forschungsgemeinschaft (DFG, German Research Foundation) --- Project-ID 286237555 -- TRR 195.
\end{Ack}

\bibliographystyle{abbrv}

\bibliography{E82E6valuni}

\end{document}